\def\eqref#1{equation~\ref{#1}}
\def\1{\bm{1}}
\DeclareMathAlphabet{\mathsfit}{\encodingdefault}{\sfdefault}{m}{sl}
\SetMathAlphabet{\mathsfit}{bold}{\encodingdefault}{\sfdefault}{bx}{n}
\newcommand{\E}{\mathbb{E}}
\newcommand{\R}{\mathbb{R}}
\newtheorem{lemma}{Lemma}
\newtheorem*{lemma*}{Lemma}
\newtheorem{definition}{Definition}
\newtheorem{theorem}{Theorem}
\newtheorem{remark}{Remark}
\newtheorem{example}{Example}
\newtheorem{proposition}{Proposition}
\newtheorem{corollary}{Corollary}
\newtheorem{assumption}{Assumption}
\def\R{{\mathbb{R}}}
\def\N{{\mathbb{N}}}
\newcommand{\bpar}[1]{\left(#1\right)}
\newcommand{\norm}[1]{\lVert#1\rVert}
\newcommand{\dotprod}[1]{\left< #1\right>}
\newcommand{\grad}[1]{\nabla f(#1)}
\newcommand{\gradsto}[1]{\tilde \nabla_K (#1)}
\newcommand{\ind}{\perp\!\!\!\!\perp}
\newcommand{\bigO}{\mathcal{O}}
\def\restriction#1#2{\mathchoice
              {\setbox1\hbox{${\displaystyle #1}_{\scriptstyle #2}$}
              \restrictionaux{#1}{#2}}
              {\setbox1\hbox{${\textstyle #1}_{\scriptstyle #2}$}
              \restrictionaux{#1}{#2}}
              {\setbox1\hbox{${\scriptstyle #1}_{\scriptscriptstyle #2}$}
              \restrictionaux{#1}{#2}}
              {\setbox1\hbox{${\scriptscriptstyle #1}_{\scriptscriptstyle #2}$}
              \restrictionaux{#1}{#2}}}
\def\restrictionaux#1#2{{#1\,\smash{\vrule height .8\ht1 depth .85\dp1}}_{\,#2}}
\title{Gradient correlation is a key ingredient
to accelerate SGD with momentum}
\author{Julien Hermant\thanks{julien.hermant@math.u-bordeaux.fr}~$^{,1}$, Marien Renaud$^1$, Jean-François Aujol$^1$, Charles Dossal$^2$, Aude Rondepierre$^2$\\
$1.$ Univ. Bordeaux, Bordeaux INP, CNRS, IMB, UMR 5251, F-33400 Talence, France\\
$2.$ IMT, Univ. Toulouse, INSA Toulouse, Toulouse, France\\
}
\newcommand{\eE}{\mathbb{E}}
\begin{document}

\maketitle

\begin{abstract}

Empirically, it has been observed that adding momentum to Stochastic Gradient Descent (SGD) accelerates the convergence of the algorithm.
However, the literature has been rather pessimistic, even
in the case of
convex functions, about the possibility of theoretically proving this observation.
We investigate the possibility of obtaining accelerated convergence of the Stochastic Nesterov Accelerated Gradient (SNAG), a momentum-based version of SGD, when minimizing a sum of functions in a convex setting. 
We demonstrate that the average correlation between gradients allows to verify the strong growth condition, which is the key ingredient to obtain acceleration with SNAG.
Numerical experiments, both in linear regression and deep neural network optimization, confirm in practice our theoretical results.

\end{abstract}

\section{Introduction}
Supervised machine learning tasks can often be formulated as the following optimization problem~\citep{hastie2009elements}:
\begin{equation}
    \label{problem finite sum}\tag{FS}
f^\ast = \min_{x \in \mathbb{R}^d} f(x), ~ \text{with }
    f(x) := \frac{1}{N}\sum_{i=1}^N f_i(x)
\end{equation}
where for all $i \in \{1,\dots,N\}$, $f_i : \mathbb{R}^d \to \mathbb{R}$ is associated to one data. As there is often no closed form solution to problem (\ref{problem finite sum}), optimization algorithms are commonly used. Considering high dimensional problems, first order algorithms such as gradient descent,
namely algorithms that make use of the gradient information, are popular due to their relative cheapness, \textit{e.g.} compared to second order methods which involve the computation of the Hessian matrix. In the case of problem (\ref{problem finite sum}), the gradient itself may be computationally heavy to obtain when $N$ is large, \textit{i.e.} for large datasets. 
This is the reason why, instead of the exact gradient, practitioners rather use an average of a subsampled set of several $\nabla f_i$, resulting in algorithms such as Stochastic Gradient Descent (SGD) \citep{RobbinsMonro1951} or ADAM algorithm \citep{kingma2014adam}.
Although an estimation of the gradient is used, SGD performs well in practice~\citep{goyal2017accurate, schmidt2021descending, renaud2024plug}.

\paragraph{Interpolation} One of the key points that explain the good performance of SGD is that large machine learning models, such as over parameterized neural networks, are generically able to perfectly fit the learning data \citep{cooper2018,allen2019convergence, nakkiran2021deep, zhang2021understanding}. From an optimization point of view, this fitting phenomenon translates into an \textbf{interpolation} phenomenon (Assumption~\ref{ass:interpolation}).
\begin{assumption}[Interpolation]\label{ass:interpolation}
    $\exists x^\ast \in \arg \min f,\forall 1 \le i \le N,~x^\ast \in \arg \min f_i.$
\end{assumption}

Strikingly, under this assumption, theoretical results show that SGD performs as well as deterministic gradient descent \citep{ma2018power,gower2019sgd,gower2021sgd}.

\paragraph{Momentum algorithms: hopes and disappointments} Within the convex optimization realm, it is well known that a first order momentum algorithm, named Nesterov Accelerated Gradient (NAG), outperforms the gradient descent in term of convergence speed \citep{nesterov1983AMF, nesterovbook}. A natural concern is thus whether, assuming interpolation and convexity, a stochastic version of NAG, \textbf{Stochastic Nesterov Accelerated Gradient} (SNAG), can be faster than SGD. Unfortunately, existing works has expressed skepticism about the possibility of acceleration via momentum algorithms, even assuming interpolation (Assumption~\ref{ass:interpolation}). \citet{devolder2014first,aujol2015stability} indicate that momentum algorithms are very sensitive to errors on the gradient, due to error accumulation. Also, the choice of parameters that offers acceleration in the deterministic case can make SNAG diverge (\cite{kidambi2018insufficiency, assran2020convergence, ganesh2023does}).

\paragraph{What keeps us hopeful} Firstly, in the case of linear regression, depending on the data, SNAG can accelerate over SGD \citep{jain2018accelerating,liu2018accelerating, varre2022accelerated}. Unfortunately the methods used in those works are hardly generalizable outside of the linear regression case. On the other hand, in a convex setting, \citet{vaswani2019fast} show that under an assumption over the gradient noise, named the \textbf{Strong Growth Condition} (\ref{SGC}), SNAG can be stabilized, and it could accelerate over SGD. However, it is not clear which functions satisfy this assumption, and in which cases acceleration occurs.

\paragraph{On the convexity assumption} For many machine learning models, such as neural networks, the associated loss function is not convex~\citep{li2018visualizing}. 

However, even in the convex setting,
    there is still work to do concerning the possibility of accelerating SGD with SNAG. 
    For example, up to our knowledge, characterizing convex smooth functions that satisfy SGC has not been addressed yet.

Finally, note that our core results about gradient correlation (Propositions \ref{Prop grad devlop}-\ref{prop:racoga_batch_1}) do not assume convexity, and thus could be used in future works beyond the convex setting.

\begin{center}
    \textbf{Can SNAG accelerate over SGD in a convex setting ?}
\end{center}
\paragraph{Contributions} \textbf{(i)} We give a new characterization of the Strong Growth Condition (\ref{SGC}) constant by using the correlation between gradients, quantified by \ref{ass:racoga} (Propositions \ref{Prop grad devlop}-\ref{prop:racoga_batch_1}), and we exploit this link to study the efficiency of SNAG. \textbf{(ii)} Using our framework, we study the theoretical impact of batch size on the algorithm performance, depending on the correlation between gradients (Theorem~\ref{th:saturation}). \textbf{(iii)} We complete convergence results of \citet{vaswani2019fast,gupta2023achieving} with new almost sure convergence rates (Theorem \ref{thm:almost_sure_cvg_cvx}). \textbf{(iv)} We provide numerical experiments that show that \ref{ass:racoga} is a key ingredient to have good performances of SNAG compared to SGD.

\section{Background}
For a function $f : \R^d \to \R$, continuously differentiable, we introduce the following definitions.

\begin{definition}\label{def:lsmooth}
Let $L>0$. $f : \R^d \to \R$ is \textbf{L-smooth} if $\nabla f$ is $L$-Lipschitz.
\end{definition}

Definition~\ref{def:lsmooth} implies that $\forall x,y \in \R^d,~  f(x) \leq f(y) + \dotprod{\grad{y},x-y} + \frac{L}{2}\norm{x-y}^2$ \citep{nesterovbook} and it ensures that the curvature of the function $f$ is upper-bounded by $L$.

\begin{definition}\label{def:cvx}
$f : \R^d \to \R$ is $\mu$-\textbf{strongly convex} if there exists $\mu > 0$ such that: $\forall x,y \in \R^d,~  f(x) \geq f(y) + \dotprod{\grad{y},x-y} + \frac{\mu}{2}\norm{x-y}^2$. $f : \R^d \to \R$ is \textbf{convex} if it verifies this property with $\mu = 0$.
\end{definition}
Definition~\ref{def:cvx} implies that the curvature of the function $f$ is lower-bounded by $\mu \ge 0$. Convex functions are very convenient for optimization and widely studied \citep{nesterovbook,beck2017first}. For instance, a critical point of a convex function is the global minimum of this function.

\begin{assumption}[smoothness]\label{ass:l_smooth}
    Each $f_i$ in (\ref{problem finite sum}) is $L_i$-smooth. We note $L_{(K)} := \max_B \frac{1}{K}\sum_{i\in B}L_i$ where $B\subset \{ 1,\dots, N \}$, $\text{Card}(B)= K$ and we note $L_{\max} := L_{(1)} = \underset{1\leq i \leq N}{\max}L_i$.
\end{assumption}
Assumption~\ref{ass:l_smooth} implies that $f$ is $L$-smooth with $L \le \frac{1}{N} \sum_{i = 1}^N L_i \le L_{(K)} \le L_{max}$, and will be used for the convergence results for SGD (Theorems~\ref{thm:sgd}-\ref{thm:sgd_almost}).

\paragraph{SGD} The stochastic gradient descent (Algorithm~\ref{alg:SGD}) is widely used despite its simplicity. It can be viewed as a gradient descent where the exact gradient is replaced by a batch estimator 
\begin{equation}\label{eq:gradient_estimator}
    \tilde \nabla_K(x) := \frac{1}{K} \sum_{i \in B} \nabla f_{i}(x),
\end{equation}
where $B$ is a batch of indices of size $K$ sampled uniformly in $ \{ B \subset \{1,\dots,N\}~ | ~\text{Card}(B) = K\}$. Note that $\tilde \nabla_K(x)$ is a random variable depending on $K$, $f$ and $x$.

\begin{minipage}[t]{0.45\textwidth}
\begin{algorithm}[H]
\caption{Stochastic Gradient Descent (SGD)}
\begin{algorithmic}[1]\label{alg:SGD}
\STATE \textbf{input:} $x_0 \in \R^d$, $s > 0$
\FOR{$n = 0, 1, \dots, n_{it}-1$}
    \STATE $x_{n+1} = x_n - s \tilde \nabla_K (x_n)$ 
\ENDFOR
\STATE \textbf{output:} $x_{n_{it}}$
\end{algorithmic}
\end{algorithm}
\end{minipage}
\hfill
\begin{minipage}[t]{0.50\textwidth}
\begin{algorithm}[H]
\caption{Stochastic Nesterov Accelerated Gradient (SNAG)}
\begin{algorithmic}[1]\label{alg:SNAG}
\STATE \textbf{input:} $x_0 = z_0 \in \R^d$, $s > 0$, $\beta \in [0,1]$, $(\alpha_n)_{n \in \mathbf{N}} \in [0,1]^{\mathbf{N}}$, $(\eta_n)_{n \in \mathbf{N}} \in \mathbf{R}_+^{\mathbf{N}}$
\FOR{$n = 0, 1, \dots, n_{it}-1$}
    \STATE $y_n = \alpha_n x_n +(1-\alpha_n)z_n$ 
    \STATE $x_{n+1} = y_n - s\tilde \nabla_K (y_n)$
    \STATE $z_{n+1} = \beta z_n + (1-\beta)y_n - \eta_n \tilde \nabla_K (y_n)$
\ENDFOR
\STATE \textbf{output:} $x_{n_{it}}$
\end{algorithmic}
\end{algorithm}
\end{minipage}


\paragraph{SNAG} The Nesterov accelerated gradient algorithm (Algorithm~\ref{alg:NAG} in Appendix \ref{appendix:deterministic_gd_nag}) allows to achieve faster convergence than gradient descent when considering $L$-smooth functions that are convex or strongly convex, see~\citet{nesterov1983AMF, nesterovbook}. 
Intuitively, a momentum mechanism accelerates the gradient descent. As proposed
in \citet{nesterov2012efficiency}, a stochastic version of the Nesterov accelerated gradient algorithm can be developed, see Algorithm \ref{alg:SNAG}.  Note that there exists several ways to write it (see Appendix~\ref{app:different_form_nesterov}).


\paragraph{Strong Growth Condition} To our knowledge, this assumption was introduced in \citet{polyak1987introduction}, and further used by \citet{cevher2019linear} as a relaxation of the maximal strong growth condition \citep{tseng1998incremental,solodov1998incremental}.
\begin{definition}
The function $f$, with a gradient estimator $\tilde \nabla_K$ (Equation~\ref{eq:gradient_estimator}), is said to verify the \textbf{Strong Growth Condition} if there exists $\rho_K\geq 1$ such that
\begin{equation}\label{SGC}\tag{SGC}
    \forall x \in \mathbb{R}^d, ~\mathbb{E}\left[\lVert \tilde\nabla_K (x) \rVert^2 \right] \leq \rho_K\lVert \nabla f(x) \rVert^2.
\end{equation}
\end{definition}
$\rho_K$ quantifies the amount of noise: the larger $\rho_K$, the higher the noise. In some sense, the strong growth condition allows to replace the norm of the stochastic gradient by the norm of the exact gradient, up to a degrading constant $\rho_K$.  We will see in Section \ref{sec:convergence} that it allows to recover similar convergence results as for the deterministic case. 
\begin{example}\label{remark strong growth fortement convex}
\citet{vaswani2019fast} show that if the function $f$ is $\mu$-Polyak-\L{}ojasiewicz (\ref{eq:pl}) and verifies Assumptions~\ref{ass:interpolation} and \ref{ass:l_smooth}, then $f$ verifies the strong growth condition with $\rho_K= \frac{L_{(K)}}{\mu}$.
In particular, this is also true if replacing (\ref{eq:pl}) by strong convexity, see \citet{necoara2019linear}.
\end{example}
\begin{remark}\label{rem:interpolation}
     The \ref{SGC} implies interpolation (Assumption \ref{ass:interpolation}) if each $f_i$ is convex, see Remark \ref{rem:interpol_sgc} in Appendix~\ref{Appendix lemma saturation}. 
     However in the following results (Theorems~\ref{thm:sgd}-\ref{thm:almost_sure_cvg_cvx}), as we will only assume that the sum of $f_i$ is convex, \ref{SGC} will not enforce interpolation. It will imply instead that minimizers of $f$ are critical points of all $f_i$.
\end{remark}
\begin{remark}\label{remark big rho}
 Considering linear regression, one can choose functions such that the \ref{SGC} is verified only for arbitrary large values of $\rho_K$
 (see Appendix \ref{Example big rho 1}).  Worse, if we discard the convexity assumption, then one can construct examples such that $\rho_K$ does not exist (see Appendix \ref{Example big rho 2}). Finding classes of functions such that the \ref{SGC} is verified for an interesting $\rho_K$ is thus not an obvious task.
\end{remark}

\section{Convergence speed of SNAG and comparison with SGD}\label{sec:convergence}
In this section, we present convergence results for SNAG (Algorithm~\ref{alg:SNAG}) under the Strong Growth Condition (\ref{SGC}). Before doing so, we introduce convergence results for SGD (Algorithm~\ref{alg:SGD}) in order to compare the performance of these two algorithms. All the results introduced in this section are summarized in Table~\ref{tab:cv_results}.\\
For $\varepsilon > 0$, we say that an algorithm $\{ x_n \}_n$ reaches an $\varepsilon$-precision at rank $\nu_\varepsilon \in \mathbb{R}_+$ if
\begin{equation}\label{eq:epsilon_pres}
   \forall n  \geq \nu_{\varepsilon},~  \mathbb{E}\left[ f(x_n)-f^\ast \right] \leq \varepsilon.
\end{equation}

We denote by $\Omega$ the set of realization of the noise. We say that an algorithm $\{ x_n \}_n$ converges almost surely with a rate negligible compared to $a_n \in \R_{++}^{\N}$, denoted by $f(x_n) - f^\ast \overset{a.s.}{=} o\left(a_n\right)$, if and only if $\exists A \subset \Omega$, such that $\mathbb{P}(A) = 1$ and $\forall \omega \in A$, $\forall \epsilon > 0$, $\exists n_0 \in \N$, such that $\forall n \ge n_0$, 
\begin{equation}\label{eq:as_cv}
    |f(x_n(\omega)) - f^\ast| \le \epsilon a_n.
\end{equation}

\begin{table}[] 
\centering
\begingroup
\begin{tabular}{|l|l|l|l|}
\hline
Algorithm & \multicolumn{1}{c|}{Assumption over $f$} & Convergence                                             &                                                                                  \\ \hline
SGD       & convex                                   & $\mathbb{E}$, $\varepsilon$ solution & $\bigO\bpar{\frac{L_{(K)}}{\varepsilon}}$, Thm~\ref{thm:sgd}                          \\ \hline
          & strongly convex                          & $\mathbb{E}$, $\varepsilon$ solution                    & $\bigO \bpar{\frac{L_{(K)}}{\mu}\log\left(\frac{1}{\varepsilon}\right)}$, Thm~\ref{thm:sgd}                \\ \hline
          & convex                                   & a.s., c.r            & $o\left( \frac{1}{n} \right)$, Thm~\ref{thm:sgd_almost}                                                       \\ \hline
          & strongly convex                          & a.s., c.r                                  & $o\left( (1-\frac{\mu}{L}+\varepsilon')^n \right)$, Thm~\ref{thm:sgd_almost}                            \\ \hline
SNAG      & Convex                                   & $\mathbb{E}$, $\varepsilon$ solution                    & $\bigO \bpar{\rho_K\sqrt{\frac{ L}{\varepsilon}}}$, Thm~\ref{thm:cv_snag_exp}                         \\ \hline
          & Strongly Convex                          & $\mathbb{E}$, $\varepsilon$ solution                    & $\bigO \bpar{\rho_K\sqrt{\frac{L}{\mu}}\log\left( \frac{1}{\varepsilon}\right)}$, Thm~\ref{thm:cv_snag_exp}     \\ \hline
          & Convex                                   & a.s., c.r                                  & $o\left( \frac{1}{n^2} \right)$, Thm~\ref{thm:almost_sure_cvg_cvx}                                                     \\ \hline
          & Strongly                                 & a.s., c.r                                  & $o\left( (1-\frac{1}{\rho_K}\sqrt{\frac{\mu}{L}}+\varepsilon')^n \right)$, Thm~\ref{thm:almost_sure_cvg_cvx}             \\ \hline
\end{tabular}
\caption{Summary of all the convergence results presented in Section~\ref{sec:convergence}. Results stated as $\epsilon$-solution refer to convergence results of the form of Equation~(\ref{eq:epsilon_pres}). $c.r.$ stands for \textit{convergence rate}. These results are stated as an upper bound of the form $f(x_n) - f^\ast = \bigO \bpar{\psi_n}$, where $\psi_n$ is a sequence decreasing to $0$.} 
\label{tab:cv_results}
\endgroup
\end{table}

\subsection{Convergence results for SGD}
First, we state convergence results of SGD (Algorithm~\ref{alg:SGD}), in expectation and almost surely. 
The two following theorems are variations of \citet{gower2019sgd} and \citet{gower2021sgd} (results in expectation) and \citet{sebbouh2021almost} (result almost surely). The difference is that our setting does not assume the convexity of each $f_i$ in (\ref{problem finite sum}), but rather only the convexity of the sum.
\begin{theorem}\label{thm:sgd}
Under Assumptions~\ref{ass:interpolation} and \ref{ass:l_smooth}, SGD (Algorithm~\ref{alg:SGD}) guarantees to reach an $\varepsilon$-precision~(\ref{eq:epsilon_pres}) at the following iterations:

$\bullet$ If $f$ is convex, $ s = \frac{1}{2L_{(K)}}$,
\begin{align}
    n &\geq  2\frac{L_{(K)}}{\varepsilon}\norm{x_0-x^\ast}^2.
\end{align}
$\bullet$ If $f$ is $\mu$-strongly convex, $ s=\frac{1}{L_{(K)}}$,
\begin{align}
n &\geq 2\frac{L_{(K)}}{\mu}\log\left( 2\frac{f(x_0)-f^\ast}{\mu \varepsilon}\right).\label{eq:sgd_sgly_conv}
\end{align}
\end{theorem}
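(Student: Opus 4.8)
The plan is to isolate a single second-moment (``expected smoothness'') estimate that, crucially, uses only $L_i$-smoothness of the individual $f_i$ — not their convexity — and then feed it into the standard one-step analysis of SGD. For a fixed batch $B$ of size $K$, I would write the estimator as the exact gradient of the batch loss $f_B:=\frac1K\sum_{i\in B}f_i$, so that $\tilde\nabla_K(x)=\nabla f_B(x)$ and $f_B$ is $L_B$-smooth with $L_B:=\frac1K\sum_{i\in B}L_i\le L_{(K)}$. By Assumption~\ref{ass:interpolation}, $x^\ast$ minimizes every $f_i$, hence minimizes $f_B$; applying the descent-lemma consequence of Definition~\ref{def:lsmooth} to $f_B$ at the point $x-\frac{1}{L_B}\nabla f_B(x)$ — an argument needing only smoothness and the existence of a global minimizer, with no convexity — gives
\begin{equation}
\norm{\tilde\nabla_K(x)}^2=\norm{\nabla f_B(x)}^2\le 2L_B\bpar{f_B(x)-f_B(x^\ast)}\le 2L_{(K)}\bpar{f_B(x)-f_B(x^\ast)}.
\end{equation}
Taking expectation over the uniform batch and using that every index lies in $B$ with probability $K/N$, so that $\E[f_B(x)]=f(x)$ and $\E[f_B(x^\ast)]=f^\ast$, yields the key bound $\E[\norm{\tilde\nabla_K(x)}^2]\le 2L_{(K)}(f(x)-f^\ast)$.

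Next I would expand $\norm{x_{n+1}-x^\ast}^2$ along the update of Algorithm~\ref{alg:SGD}, take the conditional expectation, and use unbiasedness $\E[\tilde\nabla_K(x_n)\mid x_n]=\nabla f(x_n)$ together with the key bound and the (strong) convexity inequality $\dotprod{\nabla f(x_n),x_n-x^\ast}\ge f(x_n)-f^\ast+\frac\mu2\norm{x_n-x^\ast}^2$ from Definition~\ref{def:cvx}, obtaining
\begin{equation}
\E\left[\norm{x_{n+1}-x^\ast}^2\mid x_n\right]\le(1-s\mu)\norm{x_n-x^\ast}^2-2s\bpar{1-sL_{(K)}}\bpar{f(x_n)-f^\ast}.
\end{equation}
In the convex case ($\mu=0$, $s=\frac{1}{2L_{(K)}}$) the factor $1-sL_{(K)}=\frac12$, so summing over $k=0,\dots,n-1$ and telescoping gives $\frac1n\sum_{k=0}^{n-1}\E[f(x_k)-f^\ast]\le\frac{2L_{(K)}}{n}\norm{x_0-x^\ast}^2$, which drops below $\varepsilon$ precisely when $n\ge 2\frac{L_{(K)}}{\varepsilon}\norm{x_0-x^\ast}^2$. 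Because this bound is naturally on the averaged (or best) iterate, I would either state it for $\frac1n\sum_k x_k$ or use the monotonicity of $\E[\norm{x_n-x^\ast}^2]$ supplied by the same recursion to pass to the running iterate.

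In the strongly convex case the prescribed step $s=\frac{1}{L_{(K)}}$ makes the factor $1-sL_{(K)}$ vanish, so the recursion collapses to the pure contraction $\E[\norm{x_{n+1}-x^\ast}^2\mid x_n]\le(1-\frac{\mu}{L_{(K)}})\norm{x_n-x^\ast}^2$, whence $\E[\norm{x_n-x^\ast}^2]\le(1-\frac{\mu}{L_{(K)}})^n\norm{x_0-x^\ast}^2$. I would then convert back to function values via smoothness of $f$, initialize with $\norm{x_0-x^\ast}^2\le\frac2\mu(f(x_0)-f^\ast)$ from strong convexity, and solve the resulting geometric inequality using $\log\frac{1}{1-t}\ge t$; this delivers the announced iteration count~(\ref{eq:sgd_sgly_conv}), the leading constant and the logarithmic argument following from routine simplifications.

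The one genuinely delicate step — and the point of departure from \citet{gower2019sgd} — is the second-moment estimate of the first paragraph: obtaining the sharp constant $L_{(K)}$ while assuming only smoothness of each $f_i$ and convexity of the sum $f$. Routing the argument through the batch function $f_B$ and invoking the gradient-norm inequality for smooth (possibly nonconvex) functions with a global minimizer is exactly what achieves this; the naive bound $\norm{\frac1K\sum_{i\in B}\nabla f_i(x)}^2\le\frac1K\sum_{i\in B}\norm{\nabla f_i(x)}^2$ would only recover the weaker constant $L_{\max}$ and lose the benefit of batching.
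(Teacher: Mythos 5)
Your proposal is correct and follows essentially the same route as the paper: your second-moment estimate $\E[\norm{\tilde\nabla_K(x)}^2]\le 2L_{(K)}(f(x)-f^\ast)$, obtained by viewing $\tilde\nabla_K(x)$ as $\nabla f_B(x)$ for the $L_B$-smooth batch function $f_B$ (with interpolation supplying a global minimizer of $f_B$, so no convexity of the individual $f_i$ is needed), is exactly the paper's Lemma~\ref{lem:variance_transfer_sgd}, proved the same way; the rest is the standard one-step expansion that the paper imports by reference from \citet{garrigos2023handbook}, and your convex-case conclusion, including the caveat that the guarantee holds for the averaged iterate, matches the paper's own remark.

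One caveat on the strongly convex case: your chain (distance contraction, conversion to function values via smoothness, initialization via strong convexity, $\log\frac{1}{1-t}\ge t$) yields the threshold $\frac{L_{(K)}}{\mu}\log\bpar{\frac{L_{(K)}(f(x_0)-f^\ast)}{\mu\varepsilon}}$, not the announced $2\frac{L_{(K)}}{\mu}\log\bpar{2\frac{f(x_0)-f^\ast}{\mu\varepsilon}}$. These are not related by ``routine simplifications'': your logarithm contains $L_{(K)}$ while the paper's contains $2/\mu$, and the announced threshold dominates yours only when $\varepsilon\le\frac{4(f(x_0)-f^\ast)}{\mu L_{(K)}}$, so for larger $\varepsilon$ your sufficient condition does not imply the stated one. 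To reproduce the theorem's exact constants one has to follow the paper's adaptation of the handbook's corollary, which works at the level of the squared-distance recursion (bounding $\norm{x_0-x^\ast}^2\le\frac{2}{\mu}(f(x_0)-f^\ast)$ there) rather than converting the final iterate through smoothness. This is a constants-level discrepancy rather than a conceptual gap — your argument is sound, and in the small-$\varepsilon$ regime it is in fact tighter than the stated bound — but the last step of your strongly convex paragraph should be rewritten to either derive the paper's form or state the bound you actually obtain.
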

For the convex case, the bound is of the order $\bigO \left(\frac{L_{(K)}}{\varepsilon} \right)$, while for the strongly convex case the key factor is $\frac{L_{(K)}}{\mu}$ that may be very large for ill conditioned problems.
These results are very similar to those obtained in a deterministic setting, see Appendix \ref{appendix:deterministic_gd_nag}.
\\
Additionally, almost sure convergence gives guarantees that apply to a single run of SGD.

\begin{theorem}\label{thm:sgd_almost}
Under Assumptions~\ref{ass:interpolation} and \ref{ass:l_smooth}, SGD (Algorithm~\ref{alg:SGD}) guarantees, in the sense of (\ref{eq:as_cv}), the following asymptotic results, :

$\bullet$ If $f$ is convex, $ s = \frac{1}{2L_{(K)}}$,
\begin{align}
    f(\overline{x}_n)-f^\ast &\overset{a.s.}{=} o\left( \frac{1}{n} \right).
\end{align}
$\bullet$ If $f$ is $\mu$-strongly convex, $ s=\frac{1}{L_{(K)}}$,
 \begin{align}
     f(x_n)-f^\ast &\overset{a.s.}{=} o\left( (q+\varepsilon')^n \right),
 \end{align}
for all $\varepsilon' > 0 $, where $q := 1-\frac{\mu}{L_{(K)}}$, $\overline{x}_0 = x_0$ and $\overline{x}_{n+1} = \frac{2}{n+1}x_n + \frac{n-1}{n+1}\overline{x}_n$.
\end{theorem}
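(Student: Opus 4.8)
The plan is to reduce both claims to almost-sure statements about a suitable nonnegative process built from a single one-step conditional inequality, and then to process that inequality either by the supermartingale convergence theorem (strongly convex case) or by the Robbins--Siegmund almost-supermartingale theorem combined with Kronecker's lemma (convex case), following the almost-sure analysis of \citet{sebbouh2021almost}. The common starting point is the variance control that interpolation provides: writing $\tilde\nabla_K(x) = \nabla f_B(x)$ with $f_B = \frac1K\sum_{i\in B}f_i$ an $L_{(K)}$-smooth function minimized at $x^\ast$ (Assumption~\ref{ass:interpolation}), the descent lemma gives $\|\nabla f_B(x)\|^2 \le 2L_{(K)}(f_B(x)-f_B(x^\ast))$; crucially this uses only that $x^\ast$ minimizes each batch function, not convexity of the individual $f_i$. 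Averaging over the uniform choice of $B$ yields, with $\mathcal{F}_n$ the natural filtration,
\begin{equation}
\mathbb{E}\left[\|\tilde\nabla_K(x)\|^2 \mid \mathcal{F}_n\right] \le 2L_{(K)}\left(f(x)-f^\ast\right).
\end{equation}

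For the strongly convex case I would expand $\|x_{n+1}-x^\ast\|^2$, take conditional expectation, and insert strong convexity $\langle\nabla f(x_n),x_n-x^\ast\rangle \ge f(x_n)-f^\ast + \tfrac{\mu}{2}\|x_n-x^\ast\|^2$ together with the variance bound. With $s = 1/L_{(K)}$ one has $2s^2L_{(K)} = 2s$, so the two $(f(x_n)-f^\ast)$ terms cancel exactly, leaving the clean contraction
\begin{equation}
\mathbb{E}\left[\|x_{n+1}-x^\ast\|^2\mid\mathcal{F}_n\right] \le q\,\|x_n-x^\ast\|^2, \qquad q = 1-\tfrac{\mu}{L_{(K)}}.
\end{equation}
Hence $M_n := q^{-n}\|x_n-x^\ast\|^2$ is a nonnegative supermartingale, so it converges almost surely to a finite limit; this gives $\|x_n-x^\ast\|^2 = O(q^n)$ a.s., and smoothness of $f$ (so $f(x_n)-f^\ast \le \tfrac{L}{2}\|x_n-x^\ast\|^2$ at the minimizer) upgrades this to $f(x_n)-f^\ast = O(q^n)$ a.s. Since $(q/(q+\varepsilon'))^n \to 0$ for every $\varepsilon'>0$, this is in particular $o((q+\varepsilon')^n)$, as claimed.

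For the convex case, the same expansion with $s = 1/(2L_{(K)})$ makes $2s^2L_{(K)} = s$, so that only a single negative term survives:
\begin{equation}
\mathbb{E}\left[\|x_{n+1}-x^\ast\|^2\mid\mathcal{F}_n\right] \le \|x_n-x^\ast\|^2 - s\left(f(x_n)-f^\ast\right).
\end{equation}
This is precisely the hypothesis of the Robbins--Siegmund theorem (with multiplicative and additive error terms equal to zero), which yields, on a probability-one event, both convergence of $\|x_n-x^\ast\|^2$ and summability $\sum_{n}(f(x_n)-f^\ast)<\infty$. I would then identify the averaging recursion: solving $A_{n+1}=\frac{n+1}{n-1}A_n$ shows that $\overline{x}_n = \frac{2}{n(n-1)}\sum_{k=1}^{n-1}k\,x_k$ is the linearly weighted average of the iterates, so Jensen's inequality gives $f(\overline{x}_n)-f^\ast \le \frac{2}{n(n-1)}\sum_{k=1}^{n-1}k\,(f(x_k)-f^\ast)$. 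Applying Kronecker's lemma with weights $b_n = n$ to the convergent series $\sum_k (f(x_k)-f^\ast)$ gives $\frac{1}{n}\sum_{k=1}^{n}k\,(f(x_k)-f^\ast)\to 0$ almost surely, whence $n\,(f(\overline{x}_n)-f^\ast)\to 0$, i.e. $f(\overline{x}_n)-f^\ast \overset{a.s.}{=} o(1/n)$.

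The strongly convex part is essentially immediate once the contraction is established, so the delicate step is the convex case: recognising the weighting hidden in the averaging recursion and then chaining Robbins--Siegmund with Kronecker's lemma pathwise, taking care that all the almost-sure conclusions are gathered on a single probability-one event. A secondary point requiring attention is the variance bound above, which must be justified from interpolation and smoothness alone, since---unlike the per-$f_i$ convex setting---we only assume convexity of the sum $f$ (Remark~\ref{rem:interpolation}).
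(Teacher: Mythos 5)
Your proposal is correct, and for the convex case it is essentially the paper's own route written out in full: the paper proves the same variance-transfer bound (Lemma~\ref{lem:variance_transfer_sgd}), observes that it allows the step size $s=\frac{1}{2L_{(K)}}$, and then defers to the almost-sure analysis of \citet{sebbouh2021almost}, which is precisely your chain of one-step inequality, Robbins--Siegmund, identification of $\overline{x}_n$ as the linearly-weighted average $\frac{2}{n(n-1)}\sum_{k=1}^{n-1}k\,x_k$, Jensen, and Kronecker's lemma; your two flagged ``delicate points'' (the variance bound must not use per-$f_i$ convexity, and all a.s.\ conclusions must live on one full-measure event) are exactly the points the paper also emphasizes. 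The one genuine divergence is the strongly convex case: the paper reuses the proof of Proposition~\ref{prop:strg_cnvx_almost_sure}, i.e.\ it starts from the decay of expectations $\mathbb{E}\left[\|x_{n+1}-x^\ast\|^2\right]\le q\,\mathbb{E}\left[\|x_n-x^\ast\|^2\right]$ and extracts the almost-sure rate via Markov's inequality plus Borel--Cantelli, directly yielding $o\left((q+\varepsilon')^n\right)$; you instead exploit the full conditional contraction to make $q^{-n}\|x_n-x^\ast\|^2$ a nonnegative supermartingale and invoke the supermartingale convergence theorem, obtaining the stronger pathwise statement $\|x_n-x^\ast\|^2 = O(q^n)$ a.s.\ (with a random constant), of which the claimed $o\left((q+\varepsilon')^n\right)$ is an immediate corollary. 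Your variant is cleaner and gives a sharper conclusion here; the paper's Markov/Borel--Cantelli device buys generality instead, since it needs only a decrease in expectation and is therefore reused verbatim for SNAG (Theorem~\ref{thm:almost_sure_cvg_cvx}), where the Lyapunov quantity mixes function values and distances and a pathwise supermartingale structure is less immediate.
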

Note that in the convex case, there is a need of averaging the trajectory along iterations. Proofs of Theorem~\ref{thm:sgd} and Theorem~\ref{thm:sgd_almost} are in Appendix~\ref{app:convergence_sgd}.
\subsection{Convergence in expectation for SNAG}\label{subsection:cv}

We now state the convergence speed of SNAG in expectation under the Strong Growth Condition (\ref{SGC}) and we compare it with the convergence speed of SGD.

\begin{theorem}\label{thm:cv_snag_exp}
Assume $f$ is $L$-smooth, and
that $\tilde \nabla_K$ verifies the \ref{SGC} for $\rho_K\ge 1$. Then the SNAG (Algorithm~\ref{alg:SNAG}) allows to reach an $\varepsilon$-precision~(\ref{eq:epsilon_pres}) at the following iterations:

\vspace{-0.3cm}

$\bullet$ If $f$ is convex, $ s = \frac{1}{L\rho_K},~ \eta_n = \frac{1}{L \rho_K^2}\frac{n+1}{2}, ~\beta = 1,~ \alpha_n = \frac{\frac{n^2}{n+1}}{2 + \frac{n^2}{n+1}}$,
    \begin{equation}\label{thm:cv_snag_exp convex}
        \textstyle n \geq \rho_K\sqrt{\frac{2 L}{\varepsilon}}\norm{x_0-x^\ast}. 
    \end{equation}
$\bullet$ If $f$ is $\mu$-strongly convex, $ s=\frac{1}{L\rho_K}, ~\eta_n = \eta = \frac{1}{\rho_K \sqrt{\mu L}}, ~\beta = 1 - \frac{1}{\rho_K}\sqrt{\frac{\mu}{L}},~ \alpha_n = \alpha = \frac{1}{1+ \frac{1}{\rho_K}\sqrt{\frac{\mu}{L}}}$,
    \begin{equation}
        \textstyle n \geq \rho_K\sqrt{\frac{L}{\mu}}\log\left( 2\frac{f(x_0)-f^\ast}{\varepsilon}\right). \label{thm:cv_snag_exp strongly convex}
    \end{equation}
\end{theorem}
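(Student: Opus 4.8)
The plan is to run a Lyapunov (estimate-sequence) argument, treating the convex and strongly convex cases with one common skeleton and two potentials. Throughout I condition on the filtration $\mathcal{F}_n$ generated by the iterates up to $y_n$ and write $\E_n$ for the corresponding conditional expectation; the two facts I lean on repeatedly are that $\gradsto{y_n}$ is unbiased, $\E_n[\gradsto{y_n}] = \grad{y_n}$, and that the \ref{SGC} controls its second moment, $\E_n[\norm{\gradsto{y_n}}^2] \le \rho_K\norm{\grad{y_n}}^2$.

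First I would produce a descent inequality for the $x$-update. Applying $L$-smoothness (Definition~\ref{def:lsmooth}) to $x_{n+1} = y_n - s\gradsto{y_n}$, taking $\E_n$, and inserting the \ref{SGC} gives
\begin{equation}
    \E_n[f(x_{n+1})] \le f(y_n) - s\Big(1 - \tfrac{Ls\rho_K}{2}\Big)\norm{\grad{y_n}}^2,
\end{equation}
so that with $s = \frac{1}{L\rho_K}$ the multiplier becomes $-\frac{1}{2L\rho_K}$, a guaranteed decrease proportional to $\norm{\grad{y_n}}^2$. In parallel I would expand the $z$-update: writing $w_n := \beta z_n + (1-\beta)y_n - x^\ast$, a direct computation of $\norm{z_{n+1}-x^\ast}^2$ followed by $\E_n$ and the \ref{SGC} yields
\begin{equation}
    \E_n[\norm{z_{n+1}-x^\ast}^2] \le \norm{w_n}^2 - 2\eta_n\dotprod{\grad{y_n}, w_n} + \eta_n^2\rho_K\norm{\grad{y_n}}^2.
\end{equation}
The crucial structural ingredient is the coupling $y_n = \alpha_n x_n + (1-\alpha_n)z_n$, which lets me rewrite $w_n$ as an affine combination of $y_n - x^\ast$, $y_n - x_n$ and $y_n - z_n$; convexity (Definition~\ref{def:cvx}) then turns $\dotprod{\grad{y_n}, y_n - x^\ast}$ into a lower bound on $f(y_n)-f^\ast$ and $\dotprod{\grad{y_n}, y_n - x_n}$ into $f(y_n) - f(x_n)$, picking up an extra $\frac{\mu}{2}\norm{\cdot}^2$ term in the strongly convex case.

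Next I would combine the two displays through a potential of the form $\mathcal{L}_n = A_n\big(f(x_n)-f^\ast\big) + \tfrac{1}{2s}\norm{z_n-x^\ast}^2$ in the convex case, and $\mathcal{L}_n = \big(f(x_n)-f^\ast\big) + \tfrac{c}{2}\norm{z_n-x^\ast}^2$ in the strongly convex case. The coefficients must be chosen so that (i) the net multiplier of $\norm{\grad{y_n}}^2$ is nonpositive, which balances the harmful $\eta_n^2\rho_K$ against the beneficial $-\frac{1}{2L\rho_K}$ and pins down $\eta_n = \frac{n+1}{2L\rho_K^2}$ (the appearance of $\rho_K^2$ rather than $\rho_K$ being the signature of the variance inflation entering the momentum step), and (ii) the cross terms $\dotprod{\grad{y_n}, \cdot}$ telescope, which fixes the recursion satisfied by $A_n$ and hence the stated $\alpha_n$. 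Verifying these two matchings — that the prescribed $\alpha_n$, $\eta_n$, $\beta$ make $\E_n[\mathcal{L}_{n+1}] \le \mathcal{L}_n$ in the convex case and $\E_n[\mathcal{L}_{n+1}] \le \big(1 - \tfrac{1}{\rho_K}\sqrt{\mu/L}\big)\mathcal{L}_n$ in the strongly convex case — is the step I expect to be the main obstacle: it is elementary but delicate bookkeeping in which a single mismatched constant destroys the telescoping.

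Finally I would take total expectations. In the convex case $A_n$ grows quadratically (of order $n^2/\rho_K$), so $\E[f(x_n)-f^\ast] \le \mathcal{L}_0/A_n = \bigO\big(\tfrac{\rho_K^2 L}{n^2}\norm{x_0-x^\ast}^2\big)$, and requiring the right-hand side to be at most $\varepsilon$ inverts to $n \ge \rho_K\sqrt{2L/\varepsilon}\,\norm{x_0-x^\ast}$. In the strongly convex case I would iterate the one-step contraction to get $\E[f(x_n)-f^\ast] \le \big(1-\kappa\big)^n\,\mathcal{L}_0$ with $\kappa = \frac{1}{\rho_K}\sqrt{\mu/L}$, bound $\mathcal{L}_0 \le 2\big(f(x_0)-f^\ast\big)$ using $z_0=x_0$ together with strong convexity, and then convert the geometric decay into an iteration count via $\log\frac{1}{1-\kappa} \ge \kappa$, obtaining $n \ge \rho_K\sqrt{L/\mu}\,\log\!\big(2\tfrac{f(x_0)-f^\ast}{\varepsilon}\big)$, which is the stated bound.
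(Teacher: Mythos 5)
Your proposal is correct and follows essentially the same route as the paper's own proof (Appendix~\ref{app:cv_snag}): your first display is exactly the paper's descent lemma under \ref{SGC} (Lemma~\ref{descent lemma SGC}), your $z$-expansion combined with the coupling $y_n=\alpha_n x_n+(1-\alpha_n)z_n$ and convexity is the same decomposition, and your potentials with the two matching conditions (nonpositive net coefficient of $\norm{\nabla f(y_n)}^2$ and telescoping of the cross terms) reproduce the paper's parameter-fitting, including $E_0\le 2(f(x_0)-f^\ast)$ and the $|\log(1-x)|\ge x$ conversion in the strongly convex case.
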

Theorem~\ref{thm:cv_snag_exp} is a variation of \citet{vaswani2019fast, gupta2023achieving}, see Appendix~\ref{app:sgc_related_works}. Indeed, our proof (Appendix~\ref{app:cv_snag}) leads to the same convergence result as \citet{vaswani2019fast}, although resulting in a slightly simpler formulation of the algorithm, as we do not have intermediate sequences of parameters, see Appendix~\ref{app:related_results}.
Note that we only use the $L$-smoothness of $f$ and the \ref{SGC} instead of Assumptions~\ref{ass:interpolation}-\ref{ass:l_smooth} because \ref{SGC} allows us to make weaker assumptions. 

Theorem~\ref{thm:cv_snag_exp} indicates that the performance degrades linearly with $\rho_K$.
For the special case $\rho_K= 1$, bounds of Theorem \ref{thm:cv_snag_exp} are the same as in  the deterministic case (see Appendix \ref{appendix:deterministic_gd_nag}).

\begin{remark}\label{rem:1}
    According to Theorem \ref{thm:cv_snag_exp} and Theorem \ref{thm:sgd}, SNAG (Algorithm~\ref{alg:SNAG}) is faster than SGD (Algorithm~\ref{alg:SGD}) when $\rho_K$ is small enough, more precisely when
    \begin{itemize}
        \item $\rho_K< \sqrt{\frac{2L_{(K)}^2}{\varepsilon L}}\norm{x_0-x^\ast}$ if $f$ convex.
        \item $\rho_K< 2\sqrt{\frac{L_{(K)}^2}{\mu L}}$ if $f$ $\mu$-strongly convex, ignoring the differences between logarithm terms.
    \end{itemize} 
    If $f$ is convex and the required precision $\varepsilon$ small enough, SNAG is faster than SGD. 
    It is not necessarily the case if $f$ is $\mu$-strongly convex, as the dependence on $\varepsilon$ disappears. In particular, the bound $\rho_K\leq \frac{L_{(K)}}{\mu}$ offered by strong convexity (see Example \ref{remark strong growth fortement convex}) does not guarantee acceleration.
\end{remark}
\begin{remark}\label{rem:justification}
     In Remark~\ref{rem:1}, we compare SGD and SNAG under different assumptions, in the sense that we assume that the \ref{SGC} holds when studying convergence of SNAG, but not for SGD.
In Section~\ref{app:convergence_SGD_sgc}, we see that using \ref{SGC} to study SGD leads to convergence bounds that are always worse than for SNAG, which is misleading. In Section~\ref{app:cv_snag_without_sgc}, with Theorem~\ref{thm:cv_snag_exp_without_sgc}, we show that not using \ref{SGC} when studying SNAG leads to an opposite phenomenon, with bounds for SGD that are almost always better than for SNAG, which is also misleading. Indeed, in particular, as seen in Remark~\ref{rem:1}, in the convex case, using SGC to study SNAG allows to have acceleration over SGD in finite-time, as long as we aim for a small enough $\varepsilon$-precision. Also, in our experiments, \textit{e.g.} on Figures~\ref{fig:cv_linear_regression}-\ref{fig:cnn}, we see that both cases are possible, namely SGD outperforming SNAG or SNAG outperforming SGD.
\end{remark}

\subsection{Almost sure convergence for SNAG}
We provide new asymptotic almost sure convergence results for SNAG (Algorithm~\ref{alg:SNAG}). Almost sure convergence has already been addressed in~\citet{gupta2023achieving} without convergence rates.
\begin{theorem}
\label{thm:almost_sure_cvg_cvx}
Assume $f$ is $L$-smooth
, and that $\tilde \nabla_K$ verifies the \ref{SGC} for $\rho_K\ge 1$. Then SNAG (Algorithm~\ref{alg:SNAG}) guarantees, in the sense of (\ref{eq:as_cv}), the following asymptotic results:

\vspace{-0.3cm}

$\bullet$ If $f$ is convex, $s = \frac{1}{\rho_KL},~ \eta_n = \frac{1}{4}\frac{n^2}{n+1}, ~\beta = 1,~\alpha_n =\frac{\frac{n^2}{n+1}}{4 + \frac{n^2}{n+1}}$,
 \begin{equation}
     f(x_n)-f^\ast \overset{a.s.}{=} o\left( \frac{1}{n^2} \right).
 \end{equation}
$\bullet$ If $f$ is $\mu$-strongly convex, $s = \frac{1}{\rho_KL}$, $\eta_n = \eta = \frac{1}{\rho_K\sqrt{\mu L}}$, $\beta = 1-\frac{1}{\rho_K}\sqrt{\frac{\mu}{L}}, \alpha_n = \alpha = \frac{1}{1+ \frac{1}{\rho_K}\sqrt{\frac{\mu}{L}}}$,
 \begin{equation}
     f(x_n)-f^\ast \overset{a.s.}{=} o\left( (q+\varepsilon')^n \right)
 \end{equation}
 for all $\varepsilon' > 0 $, where $q := 1-\frac{1}{\rho_K}\sqrt{\frac{\mu}{L}}$.
\end{theorem}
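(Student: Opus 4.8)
The plan is to recast the one-step analysis behind Theorem~\ref{thm:cv_snag_exp} as an \emph{almost}-supermartingale inequality and then invoke the Robbins--Siegmund theorem, which simultaneously delivers almost sure convergence of a Lyapunov energy (a $\bigO$ rate) and summability of a nonnegative residual series (which upgrades the rate to $o$). Concretely, I would fix, as in the proof of Theorem~\ref{thm:cv_snag_exp}, a time-varying energy
\[
 V_n = A_n\bpar{f(x_n)-f^\ast} + \tfrac12\norm{z_n-x^\ast}^2,
\]
where $A_n$ is a deterministic increasing weight tied to the schedules $\eta_n,\alpha_n,\beta$ and growing like $n^2$ in the convex case (so that a $\bigO(1/A_n)$ bound is a $\bigO(1/n^2)$ bound), while $A_n$ plays the role of a geometric weight in the strongly convex case. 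Let $\mathcal{F}_n$ be the $\sigma$-algebra generated by the batches drawn up to iteration $n$, so that $y_n$ is $\mathcal{F}_n$-measurable while $\tilde\nabla_K(y_n)$ is not.

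Next I would expand $V_{n+1}$: bound $f(x_{n+1})$ through $L$-smoothness along $x_{n+1}=y_n - s\tilde\nabla_K(y_n)$, and expand the squared distance $\norm{z_{n+1}-x^\ast}^2$. Taking the conditional expectation $\mathbb{E}[\,\cdot\mid\mathcal{F}_n]$, the deterministic cross-terms cancel exactly as in Nesterov's estimate-sequence argument, and the only genuinely stochastic contribution is $\mathbb{E}[\norm{\tilde\nabla_K(y_n)}^2\mid\mathcal{F}_n]$. This is precisely where the \ref{SGC} enters: it bounds that quantity by $\rho_K\norm{\nabla f(y_n)}^2$, and the step size $s=\tfrac{1}{\rho_K L}$ is calibrated so that this term is absorbed by the negative descent term coming from smoothness. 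Since \ref{SGC} introduces no additive noise, I expect a clean inequality
\[
 \mathbb{E}\!\left[V_{n+1}\mid\mathcal{F}_n\right]\le V_n - U_n,\qquad U_n\ge 0,
\]
where the decrease $U_n$ is of order $n\,(f(x_n)-f^\ast)$ in the convex case. In the strongly convex case the same computation with the constant schedules yields instead a contraction $\mathbb{E}[V_{n+1}\mid\mathcal{F}_n]\le q\,V_n$ with $q=1-\tfrac{1}{\rho_K}\sqrt{\mu/L}$.

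As $(V_n)$ is a nonnegative almost-supermartingale, Robbins--Siegmund gives that $V_n$ converges almost surely to a finite limit and that $\sum_n U_n<\infty$ almost surely. In the convex case, $A_n(f(x_n)-f^\ast)\le V_n$ already yields $f(x_n)-f^\ast=\bigO(1/n^2)$. To sharpen this to $o(1/n^2)$, I would first show, via a companion supermartingale, that $\norm{z_n-x^\ast}^2$ converges almost surely, so that $A_n(f(x_n)-f^\ast)$ converges; then the summability $\sum_n n(f(x_n)-f^\ast)<\infty$ (equivalently $\sum_n \tfrac1n\, A_n(f(x_n)-f^\ast)<\infty$), together with the divergence of $\sum_n 1/n$, forces the limit of the convergent sequence $A_n(f(x_n)-f^\ast)$ to be zero — the stochastic analogue of the Attouch--Peypouquet mechanism turning $\bigO(1/n^2)$ into $o(1/n^2)$. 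For the strongly convex case, fix $\varepsilon'>0$, set $r=q+\varepsilon'$ and $M_n=V_n/r^n$; the contraction gives $\mathbb{E}[M_{n+1}\mid\mathcal{F}_n]\le(q/r)M_n$, a nonnegative supermartingale with $q/r<1$, so $M_n$ converges almost surely and $\sum_n(1-q/r)M_n<\infty$, which forces $M_n\to0$, that is $f(x_n)-f^\ast= o((q+\varepsilon')^n)$.

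The main obstacle is the construction and bookkeeping of the weight $A_n$: one must verify the algebraic identities relating $A_n$ to the schedules $\eta_n,\alpha_n,\beta$ — which here differ slightly from those of Theorem~\ref{thm:cv_snag_exp} — so that the conditional expectation of $V_{n+1}$ telescopes into a genuine supermartingale with a nonnegative decrease $U_n$ of the right order, rather than leaving an uncontrolled additive term. A secondary difficulty, specific to the convex case, is establishing that $A_n(f(x_n)-f^\ast)$ actually converges (not merely stays bounded), since the upgrade from $\bigO$ to $o$ relies on combining this convergence with the summability of $U_n$.
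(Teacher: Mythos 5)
Your strongly convex argument is correct, though it takes a different route from the paper: the paper deduces the rate from the unconditional bound $\mathbb{E}[E_n]\le q^n E_0$ (Equation~(\ref{eq:expectation_ineq})) via Markov's inequality and Borel--Cantelli, whereas you use the conditional contraction $\mathbb{E}_n[E_{n+1}]\le q\,E_n$ to make $M_n=E_n/(q+\varepsilon')^n$ a nonnegative supermartingale and force $M_n\to 0$ by Theorem~\ref{Robbins siegmund}. Both are sound; yours requires the conditional (not merely averaged) inequality, which the proof of Theorem~\ref{thm:cv_snag_exp} does supply, so this part stands.

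The convex case has a genuine gap, and it sits exactly where the paper has to work hardest. In the one-step inequality (Equation~(\ref{eq:useful_cvg_decomposition})), the coefficient of $(f(x_n)-f^\ast)$ is entirely consumed by the telescoping against the weight $A_n\sim n^2$; the only slack available to serve as the Robbins--Siegmund decrease $U_n$ is attached to $(f(y_n)-f^\ast)$, the value at the \emph{extrapolated} point, not at $x_n$. (This is also why the schedules in the almost sure theorem are the ``less aggressive'' ones, halved relative to Theorem~\ref{thm:cv_snag_exp}: one needs the coefficient of $f(y_n)-f^\ast$ to be $\le -\eta_n/2$ rather than exactly zero.) Robbins--Siegmund therefore yields $\sum_n (n+1)(f(y_n)-f^\ast)<\infty$ a.s.\ (Equation~(\ref{eq:cvg_series_nf_n})), which by itself says nothing about $f(x_n)$: convexity upper-bounds $f(y_n)$ by a combination of $f(x_n)$ and $f(z_n)$, but gives no lower bound, so the slack cannot be moved onto $f(x_n)$. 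Your proposal asserts $U_n\sim n\,(f(x_n)-f^\ast)$ and thereby skips this obstruction entirely.

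The paper bridges the gap with a second stage that your plan lacks: from the update rules it derives a recursion for $\norm{x_{n+1}-y_{n+1}}^2$, uses the first-stage summability to show that the resulting additive term $C_n$ is a.s.\ summable, and applies Robbins--Siegmund a second time to $\tilde V_n=\lambda_n^2\norm{x_n-y_n}^2+\frac{n^2}{8L\rho_K^2}(f(x_n)-f^\ast)$ with $\lambda_n=(1-\alpha_n)^{-1}$; the inequality $\lambda_n\tilde B_n\ge \tilde V_n$ then converts summability of $\tilde B_n$ plus convergence of $\tilde V_n$ into $\tilde V_n\to 0$, which is the $o(1/n^2)$ claim. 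Your alternative patch --- a ``companion supermartingale'' showing $\norm{z_n-x^\ast}^2$ converges a.s.\ so that $A_n(f(x_n)-f^\ast)$ converges --- is unsubstantiated: isolating $\tfrac12\norm{z_n-x^\ast}^2$ leaves the cross term $\eta_n\frac{\alpha_n}{1-\alpha_n}\bigl(f(x_n)-f(y_n)\bigr)$, which has no sign and no evident summability, so that quantity is not an almost-supermartingale on its own. Your final limiting mechanism (a.s.\ convergence of the weighted energy plus summability of a weighted series forces the limit to vanish) is indeed the paper's closing step, but you cannot reach it without the $y_n$-to-$x_n$ transfer.
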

See proofs in Appendix~\ref{app:cv_snag}. These bounds are asymptotically better than the finite time bounds, with $o(\frac{1}{n^2})$ compared to $\mathcal{O}(\frac{1}{n^2})$ (Theorem \ref{thm:cv_snag_exp}) for instance in the convex setting. A similar asymptotic speedup phenomenon happens in the deterministic setting \citep{attouch2016rate}.
\begin{remark}\label{rem:almost_sure_SNAG}
     Theorem \ref{thm:almost_sure_cvg_cvx} states that in the convex case, the parameter $\rho_K$ has a negligible impact on the asymptotic convergence, and thus SNAG always asymptotically outperforms SGD. For the strongly convex case, we need to ensure that $\rho_K<\sqrt{L_{(K)}^2 / \mu L}$ to have SNAG faster than SGD.
     
\end{remark}

The possibility of acceleration of SNAG over SGD is highly depending on the \ref{SGC} constant $\rho_K$. We need to investigate for a fine characterization of $\rho_K$ to ensure acceleration in realistic contexts.


\section{Characterizing convergence with strong growth condition and gradient correlation}\label{sec:racoga}
Although general bounds on the  constant $\rho_K$ are difficult to obtain (see Remark \ref{remark big rho}), Example~\ref{ex:motivating} shows that the characterization of the \ref{SGC} constant given in Example \ref{remark strong growth fortement convex} can be improved.

\begin{example}[Motivating example]\label{ex:motivating}
   Consider the function $f(x) = \frac{1}{2}\Big( \frac{\mu}{2}\langle e_1,x\rangle^2 +  \frac{L}{2}\langle e_2,x\rangle^2 \Big)$, with $0 < \mu < L$ and $e_1,e_2$ standard basis vectors. This function satisfies Assumption \ref{ass:interpolation}, Assumption \ref{ass:l_smooth} with $L_{\max} = L$, and it is $\frac{\mu}{2}$-strongly convex. Following Example \ref{remark strong growth fortement convex}, $f$ satisfies the \ref{SGC} with $\rho_1= 2\frac{L}{\mu}$, which can be arbitrary large. However, by developing $\lVert \nabla f(x) \rVert^2$, we get that the \ref{SGC} is actually verified for $\rho_1= 2 < 2\frac{L}{\mu}$. 
\end{example}

Example~\ref{ex:motivating} motivates to seek for new, eventually tighter, characterizations of the \ref{SGC} constant. 

\subsection{Average positive correlation condition}
In this section, we show how we can exploit the finite sum structure of $f$ to exhibit a condition that, if verified, allows for a new computation of $\rho_K$.
\begin{proposition}\label{Prop grad devlop}
Considering batches of size $K$, we have
\begin{align}\label{gradDevelop}
\lVert \nabla f(x) \rVert^2  = \frac{K}{N}\mathbb{E}\left[ \lVert \tilde \nabla_K(x) \rVert^2 \right]
    + \frac{2}{N^2}\frac{N-K}{N-1}\sum_{1 \leq i< j \leq N} \langle \nabla f_i(x),\nabla f_j(x) \rangle.
\end{align}
\end{proposition}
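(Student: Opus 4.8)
The plan is to expand both sides in terms of the individual gradients $\nabla f_i(x)$ and then match, coefficient by coefficient, the diagonal terms $\norm{\nabla f_i(x)}^2$ and the off-diagonal correlation terms $\dotprod{\nabla f_i(x),\nabla f_j(x)}$. First I would use $\nabla f(x)=\frac1N\sum_{i=1}^N\nabla f_i(x)$ and expand the square to obtain $\norm{\nabla f(x)}^2=\frac{1}{N^2}\sum_{i=1}^N\norm{\nabla f_i(x)}^2+\frac{2}{N^2}\sum_{1\le i<j\le N}\dotprod{\nabla f_i(x),\nabla f_j(x)}$. Likewise, from the definition $\tilde\nabla_K(x)=\frac1K\sum_{i\in B}\nabla f_i(x)$, expanding the square splits into a diagonal part $\frac{1}{K^2}\sum_{i\in B}\norm{\nabla f_i(x)}^2$ and an off-diagonal part $\frac{1}{K^2}\sum_{i,j\in B,\,i\ne j}\dotprod{\nabla f_i(x),\nabla f_j(x)}$, where $B$ is the random batch.

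The next step is to take the expectation over the uniformly sampled size-$K$ batch $B$, which only requires the first two inclusion probabilities of the sampling scheme. Writing the two sums with indicator variables $\1_{i\in B}$ and using linearity of expectation reduces everything to $\Prb{i\in B}$ and, for $i\ne j$, to $\Prb{i,j\in B}$. A direct count with binomial coefficients gives $\Prb{i\in B}=\binom{N-1}{K-1}/\binom{N}{K}=\frac{K}{N}$ and $\Prb{i,j\in B}=\binom{N-2}{K-2}/\binom{N}{K}=\frac{K(K-1)}{N(N-1)}$. Substituting these yields $\E\!\left[\norm{\tilde\nabla_K(x)}^2\right]=\frac{1}{KN}\sum_{i=1}^N\norm{\nabla f_i(x)}^2+\frac{2(K-1)}{KN(N-1)}\sum_{1\le i<j\le N}\dotprod{\nabla f_i(x),\nabla f_j(x)}$.

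Finally, I would multiply this identity by $\frac{K}{N}$ and compare with the expansion of $\norm{\nabla f(x)}^2$ from the first step. The diagonal coefficients match exactly (both equal $\frac{1}{N^2}$), so the claim reduces to verifying the off-diagonal coefficients, i.e. that $\frac{2}{N^2}\frac{N-K}{N-1}+\frac{2(K-1)}{N^2(N-1)}=\frac{2}{N^2}$; this is immediate from $\frac{N-K}{N-1}+\frac{K-1}{N-1}=1$, after which rearranging gives the stated formula. The only genuinely delicate point is computing the pairwise inclusion probability $\Prb{i,j\in B}$ correctly; everything else is bookkeeping. Note that neither convexity nor smoothness is used, which is why the identity holds for arbitrary differentiable $f_i$.
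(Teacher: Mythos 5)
Your proof is correct, and it reaches the identity by a genuinely different (and shorter) route than the paper. You compute $\mathbb{E}\bigl[\lVert \tilde\nabla_K(x)\rVert^2\bigr]$ directly: expand the squared norm of the batch average, introduce indicator variables $\1_{i \in B}$, and invoke the first- and second-order inclusion probabilities of uniform without-replacement sampling, $\Prb{i \in B} = \frac{K}{N}$ and $\Prb{i,j \in B} = \frac{K(K-1)}{N(N-1)}$, after which the proposition follows by matching the diagonal coefficients ($\frac{K}{N}\cdot\frac{1}{KN} = \frac{1}{N^2}$) and the off-diagonal ones ($\frac{N-K}{N-1} + \frac{K-1}{N-1} = 1$). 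The paper instead proves a standalone deterministic identity (its Lemma 5) for arbitrary vectors $\{a_i\}$: it writes $\sum_{i \in B} a_i = \sum_{i=1}^N a_i - \sum_{i \notin B} a_i$, expands, sums over all $\binom{N}{K}$ batches, and counts occurrences of each term via binomial coefficients such as $\binom{N-1}{N-K-1}$ and $\binom{N-2}{N-K-2}$; the proposition is then the special case $a_i = \frac{1}{N}\nabla f_i(x)$. The two arguments rest on the same two combinatorial counts (batches containing a given index, and batches containing a given pair), but your indicator-variable formulation packages them as a standard survey-sampling second-moment computation and avoids the complement decomposition and the page of bookkeeping it entails; what the paper's lemma buys in exchange is an explicit vector identity relating $\lVert\sum_i a_i\rVert^2$ to the average of batch-sum norms, which it can cite independently. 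Your closing observation that no convexity or smoothness is used matches the paper's own remark that this proposition holds without structural assumptions on $f$.
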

Proposition~\ref{Prop grad devlop} is proved in Appendix~\ref{Appendix prop grad develop}. Without any assumption on $f$, Proposition~\ref{Prop grad devlop} splits the norm of $\nabla f$ into two terms. One relies on the gradient estimator, while the other one involves the average correlation of gradients.
 From  Proposition~\ref{Prop grad devlop}, we deduce the following consequence.
\begin{corollary}\label{corollary ass:poscor}
Considering batches of size $K$,
$f$ satisfies the \ref{SGC} with $\rho_K = \frac{N}{K}$ if its gradients are, on average, positively correlated, \textit{ i.e.} if we have
\begin{equation}\label{ass:poscor}\tag{PosCorr}
   \forall x \in \mathbb{R}^d,~ \sum_{1 \leq i< j \leq N} \langle \nabla f_i(x),\nabla f_j(x) \rangle \geq 0.
\end{equation}
\end{corollary}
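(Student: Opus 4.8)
The plan is to derive this corollary directly from the identity in Proposition~\ref{Prop grad devlop}, since that identity already isolates exactly the two quantities appearing in the \ref{SGC}. First I would fix an arbitrary $x \in \mathbb{R}^d$ and examine the coefficient $\frac{2}{N^2}\frac{N-K}{N-1}$ multiplying the correlation sum. Because the batch size satisfies $1 \le K \le N$, we have $N - K \ge 0$, $N - 1 > 0$ (assuming $N \ge 2$; the case $N=1$ is trivial as there is no cross term and $\rho_1 = 1$), and $N^2 > 0$, so this coefficient is nonnegative.

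Next I would invoke the hypothesis \ref{ass:poscor}, which guarantees $\sum_{1 \le i < j \le N} \langle \nabla f_i(x), \nabla f_j(x)\rangle \ge 0$. Multiplying a nonnegative coefficient by a nonnegative sum, the second term on the right-hand side of \eqref{gradDevelop} is nonnegative. Dropping this term from the identity therefore only decreases the right-hand side, yielding
\begin{equation*}
\lVert \nabla f(x) \rVert^2 \;\ge\; \frac{K}{N}\,\mathbb{E}\!\left[ \lVert \tilde\nabla_K(x) \rVert^2 \right].
\end{equation*}
Rearranging gives $\mathbb{E}[\lVert \tilde\nabla_K(x)\rVert^2] \le \frac{N}{K}\lVert \nabla f(x)\rVert^2$, which is precisely the \ref{SGC} with $\rho_K = \frac{N}{K}$.

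Finally I would check that this constant is admissible, i.e. that $\rho_K = \frac{N}{K} \ge 1$, which again follows from $K \le N$. Since $x$ was arbitrary, the bound holds for all $x \in \mathbb{R}^d$, completing the proof. There is no genuine obstacle here: the entire content of the corollary is the sign of the coefficient $\frac{2}{N^2}\frac{N-K}{N-1}$ together with the positivity hypothesis, and the heavy lifting (the exact decomposition of $\lVert \nabla f(x)\rVert^2$) has already been done in Proposition~\ref{Prop grad devlop}. The only point requiring a moment's care is confirming $\rho_K \ge 1$ so that the constant is valid as a \ref{SGC} constant.
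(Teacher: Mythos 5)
Your proof is correct and is essentially the paper's own argument: the corollary is deduced directly from Proposition~\ref{Prop grad devlop} by noting that the coefficient $\frac{2}{N^2}\frac{N-K}{N-1}$ is nonnegative, so under \ref{ass:poscor} the correlation term can be dropped, giving $\mathbb{E}[\lVert \tilde\nabla_K(x)\rVert^2] \le \frac{N}{K}\lVert \nabla f(x)\rVert^2$. (The paper's appendix also notes an alternative, indirect route via the batch-size-$1$ case combined with Lemma~\ref{lemma saturation}, but the direct sign argument you give is the intended one.)
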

Using condition \ref{ass:poscor}, Corollary \ref{corollary ass:poscor} ensures that $f$ verifies a \ref{SGC} for a constant $\rho_K$ only depending on $N$ and the batch size $K$, and not on geometrical parameters of $f$, \textit{e.g.} $\mu$ or $L$.
\begin{example}\label{ex:orthogonal}
    Assume $f_i(x) = \Phi(\langle x,u_i \rangle )$, for some $\Phi : \mathbb{R} \to \mathbb{R}$ that are non necessarily convex, and some orthogonal basis $\{ u_i \}_i$. We have $\nabla f_i(x) = \Phi'(\langle x,u_i \rangle)u_i$. Then, condition \ref{ass:poscor} is verified, and $f$ satisfies the \ref{SGC} with $\rho_K= \frac{N}{K}$. Note that the upper-bound $\rho_K\leq \frac{L_{(K)}}{\mu}$ given in Example \ref{remark strong growth fortement convex} can be arbitrary large independently of $N$ and $K$ (see Example \ref{ex:motivating}), meaning that eventually $\frac{N}{K} \ll \frac{L_{(K)}}{\mu}$. Thus, the new upper-bound for $\rho_K$ (Corollary \ref{corollary ass:poscor}) can be much tighter, resulting in improved convergence bounds for SNAG (Theorem~\ref{thm:cv_snag_exp}-\ref{thm:almost_sure_cvg_cvx}).
    
\end{example}

In some cases, condition~\ref{ass:poscor} could be too restrictive (see Appendix \ref{app:racoga_curvature}).
In the following section, we show how to ensure the \ref{SGC} with a relaxed version of \ref{ass:poscor}, named \ref{ass:racoga}.

\subsection{\ref{ass:racoga}: relaxing the \ref{ass:poscor} condition}
We introduce a new condition named \ref{ass:racoga} which
is related, but not the same as two other conditions named gradient diversity \citep{yin2018gradient} and gradient confusion \citep{sankararaman2020impact}. We discuss these different conditions in Appendix \ref{app:related_results}.

\begin{definition}
We say that $f$ verifies the Relaxed Averaged COrrelated Gradient Assumption (RACOGA) if there exists $c \in \mathbb{R}$ such that the following inequality holds:
\begin{equation}\label{ass:racoga}\tag{RACOGA}
    \forall x \in \mathbb{R}^d\backslash \overline{\mathcal{X}}, ~\frac{\sum_{1 \leq i< j \leq N} \langle \nabla f_i(x),\nabla f_j(x) \rangle }{\sum_{i = 1}^N \lVert \nabla f_i(x) \rVert^2 } \geq c,
\end{equation}
where $\overline{\mathcal{X}} = \{ x\in \R^d, \forall i \in \{1,\dots,N\},\norm{\nabla f_i(x)}=0\}$.
\end{definition}

\ref{ass:racoga} is a generalisation of the condition~\ref{ass:poscor} which allows to quantify anti correlation ($c<0$) or correlation ($c>0$) between gradients.

\begin{proposition}\label{prop:racoga_batch_1}
Assume \ref{ass:racoga} holds with $c>-\frac{1}{2}$. Then, considering batch of size $1$, $f$ verifies the \ref{SGC} with $\rho_1=\frac{N}{1+2c}$.
\end{proposition}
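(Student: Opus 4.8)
The plan is to specialize the exact decomposition of Proposition~\ref{Prop grad devlop} to the case $K=1$ and then feed the resulting cross-term into the \ref{ass:racoga} inequality. First I would fix $x \in \mathbb{R}^d \setminus \overline{\mathcal{X}}$ and abbreviate $Q := \sum_{i=1}^N \norm{\nabla f_i(x)}^2$ and $S := \sum_{1\le i<j\le N}\dotprod{\nabla f_i(x), \nabla f_j(x)}$. Since a batch of size $1$ samples a single index uniformly, the estimator satisfies $\mathbb{E}[\norm{\tilde\nabla_1(x)}^2] = \frac{1}{N}Q$. Plugging $K=1$ into~(\ref{gradDevelop}) collapses the prefactor $\frac{N-K}{N-1}$ to $1$ and yields the clean identity $\norm{\nabla f(x)}^2 = \frac{1}{N^2}\left(Q + 2S\right)$.

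Next I would lower-bound the right-hand side using \ref{ass:racoga}. Because $x \notin \overline{\mathcal{X}}$ we have $Q > 0$, so RACOGA reads $S \ge cQ$, whence $Q + 2S \ge (1+2c)Q$. The hypothesis $c > -\frac12$ is exactly what guarantees $1 + 2c > 0$, so the inequality is not reversed, and I obtain $\norm{\nabla f(x)}^2 \ge \frac{1+2c}{N^2}Q = \frac{1+2c}{N}\mathbb{E}[\norm{\tilde\nabla_1(x)}^2]$. Rearranging gives precisely $\mathbb{E}[\norm{\tilde\nabla_1(x)}^2] \le \frac{N}{1+2c}\norm{\nabla f(x)}^2$, which is \ref{SGC} with $\rho_1 = \frac{N}{1+2c}$.

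Finally, I would dispose of two bookkeeping points. For $x \in \overline{\mathcal{X}}$ every $\nabla f_i(x)$ vanishes, so both sides of \ref{SGC} are zero and the bound holds trivially; combined with the previous paragraph this establishes the inequality for all $x \in \mathbb{R}^d$. To confirm that the constant is admissible, i.e. $\rho_1 \ge 1$ as the definition of \ref{SGC} requires, I would note that $0 \le \norm{\sum_i \nabla f_i(x)}^2 = Q + 2S$ together with a termwise Cauchy--Schwarz bound and $\left(\sum_i a_i\right)^2 \le N\sum_i a_i^2$ confines the RACOGA ratio to $[-\frac12, \frac{N-1}{2}]$ whenever $Q>0$; hence any valid constant obeys $c \le \frac{N-1}{2}$, so $1+2c \le N$ and $\rho_1 = \frac{N}{1+2c} \ge 1$. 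This is a short computation rather than a genuine obstacle: the only points demanding care are preserving the direction of the inequality (which is exactly what $c>-\frac12$ secures) and the explicit handling of the excluded set $\overline{\mathcal{X}}$.
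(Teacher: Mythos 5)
Your proposal is correct and follows essentially the same route as the paper's proof: specialize Proposition~\ref{Prop grad devlop} to $K=1$, insert the \ref{ass:racoga} lower bound on the cross term $\sum_{i<j}\dotprod{\nabla f_i(x),\nabla f_j(x)} \ge c\sum_i \norm{\nabla f_i(x)}^2$, and rearrange using $1+2c>0$. Your additional bookkeeping (the trivial case $x\in\overline{\mathcal{X}}$ and the verification that $\rho_1\ge 1$ via $c\le\frac{N-1}{2}$) only makes explicit points the paper handles implicitly or defers to Proposition~\ref{prop:bounds_racoga}.
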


Proposition~\ref{prop:racoga_batch_1} is proved in Appendix \ref{app:prop_racoga_batch_1}.
Note that \ref{ass:racoga} is always verified with $c = -\frac{1}{2}$, and we have $c \le \frac{N-1}{2}$ (Appendix \ref{app:bounds_racoga}).

\begin{remark}
    Proposition~\ref{prop:racoga_batch_1} creates a direct link between \ref{ass:racoga} and \ref{SGC}. It indicates that the more correlation between gradients there is, the lower is the \ref{SGC} constant, which results in improved convergence bounds for SNAG, see Theorems~\ref{thm:cv_snag_exp}-\ref{thm:almost_sure_cvg_cvx}. Importantly if the gradients are too anti correlated, Proposition~\ref{prop:racoga_batch_1} could only be verified with $c$ arbitrary close to $-\frac{1}{2}$, resulting in a bound for $\rho_1$ increasing to $+\infty$.
\end{remark}
\begin{remark}
     It is well known that considering high dimensional vectors drawn uniformly on the unit sphere, they will be pairwise quasi orthogonal with high probability \citep{milman1986asymptotic}. \citet{sankararaman2020impact} show theoretically and empirically that considering neural networks with data drawn uniformly on the unit sphere, under some assumptions, linked to over parameterization, each scalar product $\langle \nabla f_i(x),\nabla f_j(x) \rangle$ is not too negative. In this case, \ref{ass:racoga} is thus verified for a $c$ that is at worst close to zero.
\end{remark}

\subsection{The strong growth condition with batch size \texorpdfstring{$1$}{one} determines how the performance scales with batch size}\label{sec:performance_scaling}
In this section, we build over Proposition \ref{prop:racoga_batch_1}, which only covers batches of size $1$, to take into account bigger batches.
 \begin{lemma}\label{lemma saturation}
Assume that for batches of size $1$, $f$ verifies the \ref{SGC} with  constant $\rho_1$. Then, for batches of size $K$, $f$ verifies the strong growth condition with constant $\rho_{K}$ where
    \begin{align}
    \rho_K \leq \frac{1}{K(N-1)}\left(\rho_{1}(N-K)  + (K-1)N  \right).
    \end{align}
    \end{lemma}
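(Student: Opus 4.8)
The plan is to apply Proposition~\ref{Prop grad devlop} at two different batch sizes, namely $K$ and $1$, and to combine the two resulting identities so as to eliminate the gradient correlation term $\sum_{1\le i<j\le N}\langle\nabla f_i(x),\nabla f_j(x)\rangle$. The point is that this correlation term enters both identities with an explicit, batch-size-dependent coefficient; cancelling it will leave an \emph{exact} affine relation expressing $\mathbb{E}\!\left[\norm{\tilde\nabla_K(x)}^2\right]$ in terms of $\mathbb{E}\!\left[\norm{\tilde\nabla_1(x)}^2\right]$ and $\norm{\nabla f(x)}^2$. Only at the very end will the batch-$1$ strong growth condition be used, turning the exact relation into the desired inequality.

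Concretely, writing $C(x):=\sum_{1\le i<j\le N}\langle\nabla f_i(x),\nabla f_j(x)\rangle$, Proposition~\ref{Prop grad devlop} gives for batch size $K$
\begin{equation}
\norm{\nabla f(x)}^2 = \frac{K}{N}\,\mathbb{E}\!\left[\norm{\tilde\nabla_K(x)}^2\right] + \frac{2}{N^2}\frac{N-K}{N-1}\,C(x),
\end{equation}
and for batch size $1$, where the factor $\frac{N-K}{N-1}$ reduces to $1$,
\begin{equation}
\norm{\nabla f(x)}^2 = \frac{1}{N}\,\mathbb{E}\!\left[\norm{\tilde\nabla_1(x)}^2\right] + \frac{2}{N^2}\,C(x).
\end{equation}
I would solve the second identity for $\tfrac{2}{N^2}C(x)$ and substitute it into the first. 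Collecting the $\norm{\nabla f(x)}^2$ terms (using $1-\frac{N-K}{N-1}=\frac{K-1}{N-1}$) and then multiplying through by $N/K$, I expect to reach the exact relation
\begin{equation}\label{eq:plan_exact}
\mathbb{E}\!\left[\norm{\tilde\nabla_K(x)}^2\right] = \frac{N(K-1)}{K(N-1)}\norm{\nabla f(x)}^2 + \frac{N-K}{K(N-1)}\,\mathbb{E}\!\left[\norm{\tilde\nabla_1(x)}^2\right].
\end{equation}

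It then remains to invoke the batch-$1$ strong growth condition, $\mathbb{E}\!\left[\norm{\tilde\nabla_1(x)}^2\right]\le\rho_1\norm{\nabla f(x)}^2$. The main point to check is the sign of the coefficient multiplying $\mathbb{E}[\norm{\tilde\nabla_1(x)}^2]$ in~\eqref{eq:plan_exact}: since $K\le N$, the factor $\frac{N-K}{K(N-1)}$ is nonnegative, so replacing $\mathbb{E}[\norm{\tilde\nabla_1(x)}^2]$ by the upper bound $\rho_1\norm{\nabla f(x)}^2$ preserves the inequality and yields
\begin{equation}
\mathbb{E}\!\left[\norm{\tilde\nabla_K(x)}^2\right] \le \frac{1}{K(N-1)}\Big(\rho_1(N-K)+(K-1)N\Big)\norm{\nabla f(x)}^2,
\end{equation}
which holds for every $x\in\mathbb{R}^d$ and is exactly the claimed bound on $\rho_K$. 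This nonnegativity of the coefficient is the only genuinely delicate step; the rest is bookkeeping. As an independent sanity check on the algebra, one can recompute $\mathbb{E}[\norm{\tilde\nabla_K(x)}^2]$ directly from the definition of the estimator, using that each index belongs to a uniform size-$K$ batch with probability $K/N$ and each ordered pair with probability $\frac{K(K-1)}{N(N-1)}$; this reproduces~\eqref{eq:plan_exact} and confirms that the correlation term cancels as intended.
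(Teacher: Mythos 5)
Your proposal is correct and follows essentially the same route as the paper: both eliminate the gradient-correlation term by combining the batch-$K$ identity of Proposition~\ref{Prop grad devlop} with its batch-$1$ case (which the paper re-derives by directly expanding $\norm{\nabla f(x)}^2$, an identity equivalent to your second display), arrive at the same exact affine relation between $\mathbb{E}\bigl[\norm{\tilde\nabla_K(x)}^2\bigr]$, $\mathbb{E}\bigl[\norm{\tilde\nabla_1(x)}^2\bigr]$ and $\norm{\nabla f(x)}^2$, and only then invoke the batch-$1$ strong growth condition, using that the coefficient $\frac{N-K}{K(N-1)}$ is nonnegative. Your explicit flagging of that sign condition is a sound (if implicit in the paper) observation, and your algebra checks out.
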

Lemma \ref{lemma saturation} is proved in Appendix~\ref{Appendix lemma saturation}. It shows that if the \ref{SGC} is verified for batches of size $1$, it is verified for any size of batch $K$ and we can compute an estimation of $\rho_K$.

Strikingly, Lemma \ref{lemma saturation} allows to study the effect of increasing batch size $K$ on the number of $\nabla f_i$ evaluations we need to reach a $\varepsilon$-solution.
We only consider the convex case, as the same reasoning and results hold for strongly convex functions.

\begin{theorem}\label{th:saturation}
Assume $f$ is convex and $L$-smooth, and that $\tilde \nabla_1$ verifies the \ref{SGC} with $\rho_1 \geq 1$. Then, SNAG (Algorithm~\ref{alg:SNAG}) with batch size $K$ allows to reach an $\varepsilon$-precision~(\ref{eq:epsilon_pres}) at this amount of $\nabla f_i$ evaluations:
    \begin{equation}
   \Delta_K.\rho_1 \sqrt{\frac{2 L}{\varepsilon}}\norm{x_0-x^\ast},
\end{equation}
where $\Delta_K :=\left( \frac{N-K}{N-1}+ \frac{N}{\rho_1}\frac{K-1}{N-1} \right)$, and $\rho_1 \sqrt{\frac{2 L}{\varepsilon}}\norm{x_0-x^\ast}$ is the number of $\nabla f_i$ evaluations needed to reach an $\varepsilon$-precision when using batches of size $1$ according to Theorem \ref{thm:cv_snag_exp}.
\end{theorem}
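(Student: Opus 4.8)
The plan is to count gradient evaluations as (number of iterations) $\times$ (cost per iteration), and then substitute the batch-$K$ strong growth constant supplied by Lemma~\ref{lemma saturation}. First I would observe that a single iteration of SNAG (Algorithm~\ref{alg:SNAG}) with batch size $K$ computes the estimator $\tilde \nabla_K(y_n)$ exactly once (it is reused in both the $x_{n+1}$ and $z_{n+1}$ updates), so it costs precisely $K$ evaluations of the individual gradients $\nabla f_i$.

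Next, to apply Theorem~\ref{thm:cv_snag_exp} at batch size $K$ I need the \ref{SGC} to hold for $\tilde \nabla_K$ with some constant $\rho_K$, but only $\rho_1$ is assumed. I would therefore invoke Lemma~\ref{lemma saturation} to transfer the strong growth condition from batch size $1$ to batch size $K$, yielding the admissible constant $\rho_K \leq \frac{1}{K(N-1)}\left(\rho_1(N-K)+(K-1)N\right)$. With this $\rho_K$, Theorem~\ref{thm:cv_snag_exp} in the convex case guarantees $\varepsilon$-precision after $n = \rho_K\sqrt{2L/\varepsilon}\,\norm{x_0-x^\ast}$ iterations.

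Multiplying the iteration count by the per-iteration cost $K$ gives a total of $K\rho_K\sqrt{2L/\varepsilon}\,\norm{x_0-x^\ast}$ gradient evaluations. The remaining step is purely algebraic: substituting the bound on $\rho_K$ cancels the prefactor $K$ against the $1/K$ in the lemma, leaving $K\rho_K \leq \frac{\rho_1(N-K)+(K-1)N}{N-1}$, and factoring out $\rho_1$ rewrites this as $\rho_1\left(\frac{N-K}{N-1} + \frac{N}{\rho_1}\frac{K-1}{N-1}\right) = \rho_1\Delta_K$, exactly matching the definition of $\Delta_K$. This produces the claimed bound $\Delta_K\,\rho_1\sqrt{2L/\varepsilon}\,\norm{x_0-x^\ast}$.

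I do not anticipate a genuine obstacle here; the argument is essentially bookkeeping on top of the two cited results. The one conceptual point worth stating explicitly is that the relevant complexity measure is the \emph{total} number of gradient evaluations, $K\times(\text{iterations})$, rather than the iteration count alone: increasing $K$ simultaneously raises the per-iteration cost and lowers $\rho_K$ via Lemma~\ref{lemma saturation}, and the factor $\Delta_K$ is precisely what encodes the trade-off between these two competing effects.
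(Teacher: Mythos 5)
Your proposal is correct and takes essentially the same route as the paper's proof: invoke Lemma~\ref{lemma saturation} to obtain the admissible constant $\rho_K$ from $\rho_1$, apply Theorem~\ref{thm:cv_snag_exp} (convex case) at batch size $K$, and multiply the iteration count by the per-iteration cost $K$, with the factor $K$ cancelling against the $1/K$ in the lemma to yield $\Delta_K\,\rho_1\sqrt{2L/\varepsilon}\,\norm{x_0-x^\ast}$. The bookkeeping and the final algebraic factorization match the paper's argument step for step, so there is nothing to correct.
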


Compared to the theorems of section \ref{sec:convergence}, Theorem \ref{th:saturation} gives a bound on the number of $\nabla f_i$ we have to evaluate, not the number of iterations of the algorithm, see the proof in Appendix~\ref{app:th_saturation}.

Theorem~\ref{th:saturation} indicates that using batches of size $K$, we need $\Delta_K$ times the number of $\nabla f_i$ that is needed when using batches of size $1$ to reach an $\epsilon$-precision. 
Note also that it assumes the knowledge of $\rho_1$, that can be determined using \ref{ass:racoga}, see Proposition~\ref{prop:racoga_batch_1}.
\begin{remark}\label{rem:performance_scaling}
  From Theorem \ref{th:saturation}, we distinguish 3 regimes, among which the orthogonality of gradients is a critical state.
    \begin{enumerate}
    \item $\rho_1 = N$. This is notably true when the gradients are orthogonal. $\Delta_K = 1$ for any value of $K$, and the number of $\nabla f_i$ evaluations is exactly the same independently of batch size.
    \item $\rho_1 < N$. The gradients are in average positively correlated,\textit{ i.e.} \ref{ass:racoga} is verified with $c > 0$. $\Delta_K > 1$, and increasing batch size leads to an increasing amount of $\nabla f_i$ evaluations. So, increasing batch size will make parallelization sublinearily efficient, a phenomenon known as performance saturation, see~\citet{ma2018power, liu2018accelerating}.
    \item  $\rho_1 > N$. The gradients are in average negatively correlated, \textit{ i.e.} \ref{ass:racoga} is verified with $c < 0$. $\Delta_K < 1$ and larger batches leads to a decreasing amount of $\nabla f_i$ evaluations.
    \end{enumerate}
\end{remark}
Theorem~\ref{th:saturation} and Remark~\ref{rem:performance_scaling} state that, considering convergence speed, replacing the exact gradient by a stochastic approximation is not necessarily cheaper, in term of number of $\nabla f_i$ we evaluate.

\section{Numerical experiments}
Our theory indicates that the correlation between gradients, 
evaluated through
\ref{ass:racoga} is needed to have 
SNAG (Algorithm~\ref{alg:SNAG}) outperforming SGD (Algorithm~\ref{alg:SGD}).
We provide numerical experiments to validate this statement by running SNAG to optimize classic machine learning models
such as
linear regression (Section~\ref{sec:xp_linear_reg})
or
classification neural network (Section~\ref{sec:xp_cnn}). 
We also compare its performance with its deterministic version NAG (Algorithm~\ref{alg:NAG}), together
with 
GD (Algorithm~\ref{alg:GD}). Our code is available at \url{https://github.com/J-Hermant/Momentum_Stochastic_GD}.

\paragraph{\ref{ass:racoga} in practice}
We introduced the \ref{ass:racoga}
as an inequality that holds over all the space. However, in practice, one only needs to consider the \ref{ass:racoga} values along the optimization path. So this quantity will be computed only along this path.

\paragraph{Performance metrics}  Our interest will be how the algorithms make the training loss function decrease. In order to make a fair comparison between algorithms, our $x$-axis is the number of $\nabla f_i$ evaluations, not $n_{it}$, the number of iterations of the algorithms.

\subsection{Linear regression}\label{sec:xp_linear_reg}
For a dataset $\{a_i,b_i \}_{i=1}^N \in \R^d \times \R$, we want to solve a linear regression formulated as Problem~\ref{pbm:lin_reg}.

\begin{equation}\label{pbm:lin_reg}\tag{LR}
    f(x) := \frac{1}{N}\sum_{i=1}^N f_i(x) := \frac{1}{N}\sum_{i=1}^N \frac{1}{2}(\dotprod{a_i,x} - b_i)^2.
\end{equation}
We consider the overparameterized case, \textit{i.e.} $d>N$. Note that the linear regression problem is convex and smooth, so we are in the theoretical setting of this paper. 
Moreover, we can directly compute the parameters involved in the algorithms except for a parameter $\lambda$ that replaces the unknown $\rho_K$ constant in the case of SNAG, see details in Appendix~\ref{app:lin_reg_details}.

An interesting characteristic of linear regression is that as $\nabla f_i(x) = (\dotprod{a_i,x} - b_i)a_i$, we have
\begin{equation}
    \underbrace{\dotprod{\nabla f_i(x),\nabla f_j(x)}}_{\text{gradient correlation}} = (\dotprod{a_i,x} - b_i)(\dotprod{a_j,x} - b_j)\underbrace{\dotprod{a_i,a_j}}_{\text{data correlation}}.
\end{equation}
The pairwise correlation between the gradients depends explicitly on the pairwise correlation inside the data. Therefore, we expect the correlation inside data to impact on the \ref{ass:racoga} values, and thus on the performance of stochastic algorithms such as SNAG (Algorithm~\ref{alg:SNAG}).

To validate this intuition experimentally, we build two different datasets with $N=100$ and $d = 1000$. The first set of $\{ a_i \}_{i=1}^N$ is generated uniformly onto the $d$-dimensional sphere, such that the data are fewly correlated. The second one is generated by a Gaussian mixture law with ten modes, which induces correlation inside data. In both cases, the $\{ b_i \}_{i=1}^N$ are generated by a Gaussian law.

\begin{figure}[!ht]
    \begin{subfigure}[b]{0.5\textwidth}
        \centering
        \includegraphics[scale=0.32]{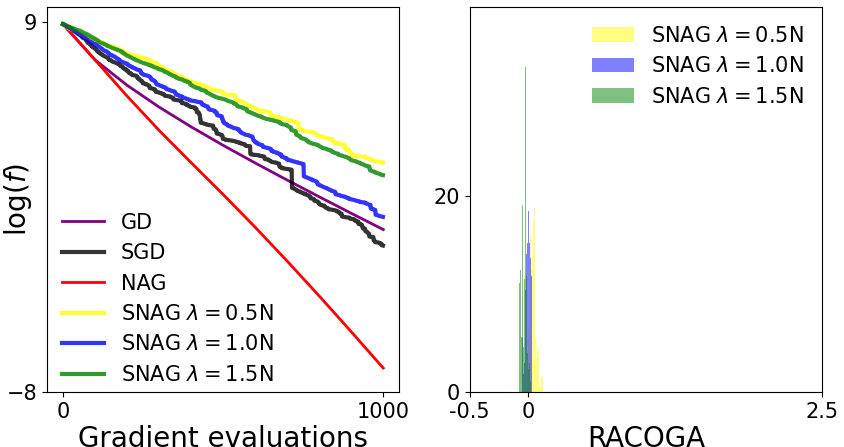}
        \caption{Low correlation within data}
        \label{fig:lin_reg_iso}
    \end{subfigure}
    \hfill
    \begin{subfigure}[b]{0.5\textwidth}
        \centering
        \includegraphics[scale=0.32]{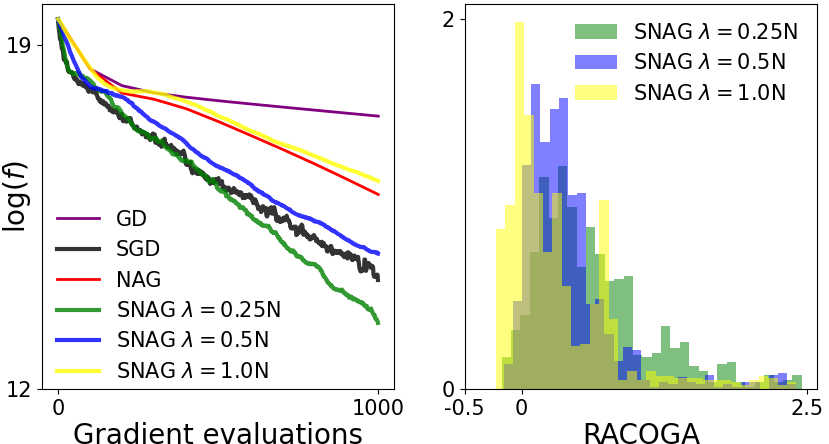}
        \caption{High correlation within data}
        \label{fig:lin_reg_corr}
    \end{subfigure}
    \caption{Illustration of the convergence speed of GD (Algorithm~\ref{alg:GD}), 
    SGD (Algorithm~\ref{alg:SGD}, batch size $1$), NAG (Algorithm~\ref{alg:NAG}) and
    SNAG (Algorithm~\ref{alg:SNAG}, batch size $1$) on a linear regression problem, together with an histogram distribution of \ref{ass:racoga} values along the iterations of SNAG. 
    Stochastic algorithms results are averaged over ten runs.
    On the left, data are generated by a law that make them fewly correlated, while on the right the data are generated by a gaussian mixture, leading to higher correlation.
    The $\lambda$ parameter replaces the unknown \ref{SGC} constant in the algorithm, see Appendix~\ref{app:lin_reg_details}.
    Note that the data correlation results in better performance of SNAG, whereas uncorrelated data lead to smaller \ref{ass:racoga} values, reducing the benefit of using SNAG.} 
    \label{fig:cv_linear_regression}
\end{figure}

\paragraph{SGD vs SNAG} Comparing right parts of Figure~\ref{fig:lin_reg_iso} and Figure~\ref{fig:lin_reg_corr}, we observe that the lack of correlation inside data leads to smaller values of \ref{ass:racoga}. In the case of Figure~\ref{fig:lin_reg_iso}, these lower RACOGA values coincide with SGD being faster than SNAG.
On the other hand we see on Figure~\ref{fig:lin_reg_corr} that the presence of correlation inside the data makes the optimization path crosses areas with higher \ref{ass:racoga} values, allowing SNAG to be faster than SGD. These experimental results support our theoretical findings.

\paragraph{Deterministic vs stochastic}
Strikingly, one can also observe that the lack of correlation results in poor performance of all the stochastic algorithms, especially compared to NAG (Figure~\ref{fig:lin_reg_iso}). Conversely, presence of correlation results in the opposite phenomenon (Figure~\ref{fig:lin_reg_corr}).  
These observations are consistent with our theoretical findings of Section~\ref{sec:performance_scaling}, which indicate that is some cases, stochastic algorithms are not necessarily cheaper to use than their deterministic counterparts in term of $\nabla f_i$ evaluations.


\paragraph{Role of $\lambda$} In the case of SNAG (Algorithm~\ref{alg:SNAG}), the choice of parameters from Theorem~\ref{thm:cv_snag_exp} involves the $\rho_K$ constant, that we do not know. We thus replace $\rho_K$ by a parameter $\lambda$. The higher the \ref{ass:racoga} are, the smaller $\rho_K$ is, and the smaller $\lambda$ can be chosen which results to more aggressive steps of the algorithm, as $s = \frac{1}{L\lambda}$. See details in Appendix~\ref{app:lin_reg_details}.

\subsection{Neural networks}\label{sec:xp_cnn}
In this second experiment, we aim to test if the crucial role of correlation inside data observed for linear regression (Section~\ref{sec:xp_linear_reg}) extend to more general models. 

For a dataset $\{a_i,b_i \}_{i=1}^N \in \R^d \times \R$, we consider a classification problem tackled with a neural network model with the cross-entropy loss.
Importantly, this problem is \underline{\textbf{not convex}}, so we are not anymore in the setting of our theoretical results.

We use the SNAG version implemented in \textit{Pytorch} (Algorithm~\ref{alg:SNAG:ML}), that is equivalent to Algorithm~\ref{alg:SNAG}, see Appendix~\ref{app:different_form_nesterov}


As for the linear regression problem, we use two different datasets. The first one, CIFAR10 \citep{krizhevsky2009learning}, is composed of images and serves as the correlated dataset. The second one, generated onto the $d$-dimensional sphere with $2$ different labels according to which hemisphere belongs each data (see details in Appendix~\ref{app:neural_net_details}), serves as the uncorrelated dataset. For the CIFAR10 experiment, we use a Convolutional Neural Network (CNN,  \citet{lecun1998gradient}), and for the sphere experiment we use a Multi Layer Perceptron (MLP, \citet{rumelhart1986learning}).

\begin{figure}[!ht]

       \begin{subfigure}[b]{0.5\textwidth}
        \centering
        \includegraphics[scale=0.3]{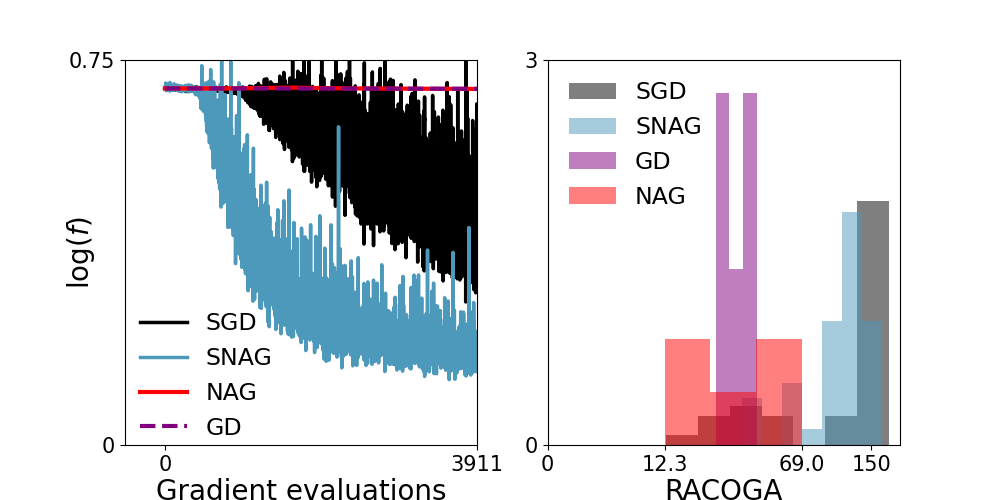}
        \caption{SPHERE - MLP}
        \label{fig:cnn_sphere}
    \end{subfigure}
    \hfill
     \begin{subfigure}[b]{0.5\textwidth}
        \centering
        \includegraphics[scale=0.3]{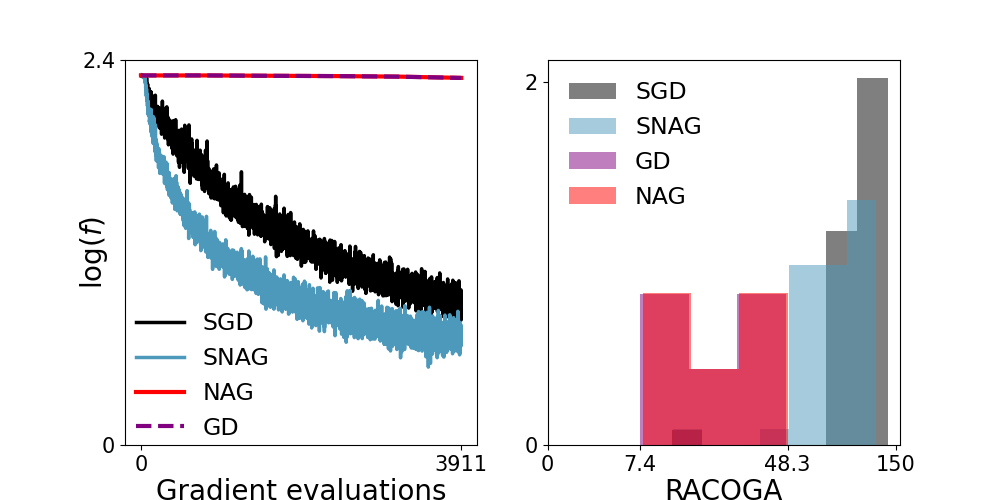}
        \caption{CIFAR10 - CNN}
        \label{fig:cnn_cifar10}
    \end{subfigure}
    \caption{Illustration of the convergence speed of GD (Algorithm~\ref{alg:GD}), SGD (Algorithm~\ref{alg:SGD}, batch size $64$), NAG (Algorithm~\ref{alg:SNAG:ML}, full batch) and SNAG (Algorithm~\ref{alg:SNAG:ML}, batch size $64$) averaged over $10$ different initializations, together with an histogram distribution of \ref{ass:racoga} values taken along the optimization path, averaged over $10$ different initialisations, where the $x$-axis scale is logarithmic. On the left, we use a MLP to classify data sampled from a law such that they are fewly correlated.
    On the right we use a CNN to classify CIFAR10 images.
    Note that contrarily to Figure~\ref{fig:cv_linear_regression}, the presence of correlation within data no longer influence the \ref{ass:racoga} values, that remains high in both cases, resulting in better performances of SNAG.} 
    \label{fig:cnn}
\end{figure}

Strikingly, it appears on Figures~\ref{fig:cnn_sphere} and~\ref{fig:cnn_cifar10} that the correlation inside data has no longer direct impact on the \ref{ass:racoga} values along the iterations path. In each case the \ref{ass:racoga} values are high, which results is SNAG outperforming other algorithms. In particular, both deterministic algorithms are significantly less efficient. 

These experiments indicate that neural networks offer high \ref{ass:racoga} values, that SNAG can take advantage of to converge faster. 

\section{Conclusion}
In this paper, we introduced \ref{ass:racoga} to help us to understand in which case the Stochastic Nesterov Accelerated Gradient algorithm (SNAG) allows to outperform the Stochastic Gradient Descent (SGD) for convex or strongly convex functions, as it happens for the deterministic counterparts of these algorithms. We demonstrate theoretically and empirically that \textbf{large \ref{ass:racoga} values allows to accelerate SGD with momentum}. \ref{ass:racoga} may be the, up to now, missing ingredient to understand the acceleration possibilities offered by SNAG in this setting, outside of the linear regression problems. 

\subsubsection*{Acknowledgments}
This work was supported by the ANR project PEPR PDE-AI, and the French Direction Générale de l’Armement. Experiments presented in this paper
were carried out using the PlaFRIM experimental testbed,
supported by Inria, CNRS (LABRI and IMB), Universite de
Bordeaux, Bordeaux INP and Conseil Regional d’Aquitaine
(see https://www.plafrim.fr). We thank Bilel Bensaid to bringing to our knowlegde the works about gradient diversity and gradient confusion, and Eve Descomps for her help for figure design.

\bibliography{iclr2025_conference}
\bibliographystyle{iclr2025_conference}

\appendix
\section*{Supplementary material}
These supplements contain additional details on the numerical experiments, proofs of our theoretical results, and some additional insights. In Supplement~\ref{app:xp_num}, we present additional details about numerical experiments and supplementary experiments. In Supplement~\ref{app:backgroud},
an optimization background is provided for completeness. In Supplement~\ref{app:related_results}, we 
present and comment related works. In Supplement~\ref{app:example}, we provide simple examples of functions that do not verify \ref{SGC}, or verify it for a large constant. In Supplement~\ref{app:convergence_sgd_section}, we provide convergence proofs for SGD (Algorithm~\ref{alg:SGD}). In Supplement~\ref{app:cv_snag_without_sgc}, we introduce new convergence results for SNAG (Algorithm~\ref{alg:SNAG}) without assuming that the \ref{SGC} holds. In Supplement~\ref{app:cv_snag}, we present convergence proof for SNAG (Algorithm~\ref{alg:SNAG}). In Supplement~\ref{app:proof_racoga}, we provide the proof of the results presented in Section~\ref{sec:racoga}. Finally in Supplement~\ref{app:racoga_curvature}, we give deeper explanations 
concerning the link between \ref{ass:racoga} and algorithms considered in this paper when applied to the problem of linear regression.

\subsubsection*{Reproductibility Statement}
Source code used in our experiments can be found in supplementary material. It
contains a README.md file that explains step by step how to run the algorithm and replicate the
results of the paper. We detail our datasets, network architectures and parameter 
choices in Section \ref{app:neural_net_details}. Theoretical results presented in the paper are proved in the appendices.

\subsubsection*{Impact Statement}
The present paper, from an optimization point of view, aims to strengthen our understanding of the theory of machine learning. A good comprehension of the tools that are broadly used is important in order to quantify their impacts
on the world. 
Moreover, considering environmental impact, it is crucial to understand the process that makes learning more efficient, especially accelerate current optimization algorithms, without necessarily using huge models.

\section{Additional experiments and details}\label{app:xp_num}
This section presents additional details on the experiments for the sake of reproducibility. We also provide additional experiments for a deeper analysis.

\subsection{Linear regression}\label{app:lin_reg_details}
\paragraph{Algorithms and parameters}
For the problem~\ref{pbm:lin_reg}, we can explicitly compute geometrical constants. Indeed,
$f$ is $L$-smooth with $L = \frac{1}{N}\lambda_{\max}\left(\sum_{i=1}^N a_i a_i^T \right)$, and each $f_i$ is $L_i$-smooth with $L_i = \lambda_{\max}\left( a_i a_i^T \right)$. 
In the overparametrized case
\textit{i.e.} $N < d$, $f$ is not $\mu$-strongly convex. However, up to a restriction to the vectorial subspace $\mathbf{V}$ spanned by $\{a_1,\dots,a_N \}$, $\restriction{f}{\mathbf{V}}$
is $\mu = \frac{1}{N}\lambda_{\min}\left(\sum_{i=1}^N a_i a_i^T \right)$-strongly convex. Therefore in our experiments, in order to run GD, SGD and NAG (respectively Algorithms~\ref{alg:GD},~\ref{alg:SGD} and~\ref{alg:NAG}), we chose the parameters respectively according to Theorem~\ref{th:cv_speed_gd},~\ref{thm:sgd} and~\ref{th:cv_speed_nag}. In the case of SNAG (Algorithm~\ref{alg:SNAG}), in order to apply Theorem~\ref{thm:cv_snag_exp}, we 
also need to know the \ref{SGC} constant $\rho_K$, where $K$ is the selected batch size. However this constant is hard to compute. The knowledge of \ref{ass:racoga} along iterates would be sufficient, but we do not know this path before launching the algorithm. Thus, we run SNAG with this choice of parameters
\begin{equation}\label{eq:app_lin_reg_param}
 s=\frac{1}{L\lambda},~ \eta = \frac{1}{\sqrt{\mu L} \lambda}, ~\beta = 1 - \frac{1}{\lambda}\sqrt{\frac{\mu}{L}},~ \alpha = \frac{1}{1+ \frac{1}{\lambda}\sqrt{\frac{\mu}{L}}}
\end{equation}
where $\lambda \geq 1$. In order to achieve better performance, provided that the iterates cross areas with higher \ref{ass:racoga} values, $\lambda$ can be chosen more aggressively, \textit{i.e.} $\lambda$ smaller, as on Figure~\ref{fig:lin_reg_corr}. Decreasing the $\lambda$ parameter leads to a more aggressive, or less safe algorithm, because it will increase $s$ and $\eta$ in Equation (\ref{eq:app_lin_reg_param}). Taking a glance at Algorithm~\ref{alg:SNAG}, we see that it results in making larger gradient steps. Recall that larger gradients steps allows the trajectory to move faster, and eventually to converge faster, but steps that are too big will make the algorithm diverge.

\begin{figure}[!ht]
    \begin{subfigure}[b]{0.5\textwidth}
        \centering
        \includegraphics[scale=0.3]{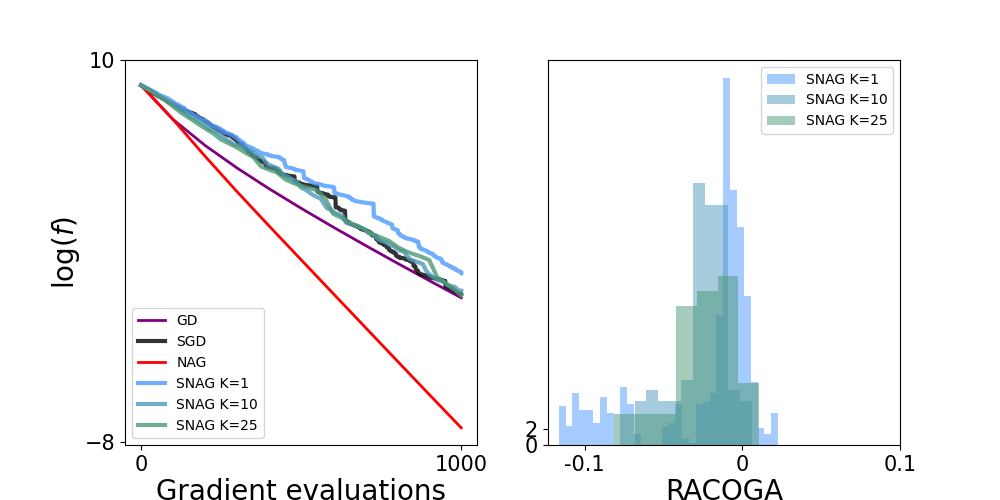}
        \caption{Isotropic data}
        \label{fig:lin_reg_iso_batch}
    \end{subfigure}
    \hfill
    \begin{subfigure}[b]{0.5\textwidth}
        \centering
        \includegraphics[scale=0.3]{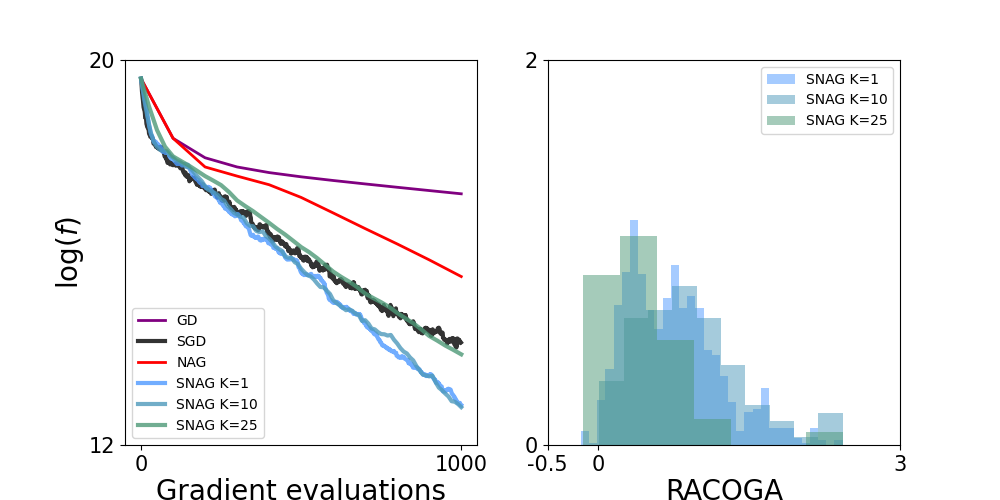}
        \caption{Correlation within data}
        \label{fig:lin_reg_corr_batch}
    \end{subfigure}
    \caption{Illustration of the convergence speed of GD (Algorithm \ref{alg:GD}), NAG (Algorithm \ref{alg:NAG}), SGD (Algorithm \ref{alg:SGD}) and SNAG (Algorithm \ref{alg:SNAG}) with varying batch sizes $K$, applied to a linear regression problem, together with an histogram distribution of \ref{ass:racoga} values along the iterations of SNAG. 
    The stochastic algorithms results are average on ten runs.
    On the left, data are generated by a law such that they are fewly correlated, while on the right the data are generated by a gaussian mixture, such that some of the data are highly correlated. Note that the presence of correlation in data results in a decrease of performance for SNAG (Algorithm \ref{alg:SNAG}) when increasing too much the batch size, whereas uncorrelated data results in an improvement of performance when increasing batch size.} 
    \label{fig:cv_linear_regression_batch_size}
\end{figure}

\begin{algorithm}
\caption{Stochastic Nesterov Accelerated Gradient - Machine learning version (SNAG ML)}
\begin{algorithmic}[1]\label{alg:SNAG:ML}
\STATE \textbf{input:} $x_0, b_0 \in \R^d$, $s > 0$, $p \in [0,1]$, $(\eta_n)_{n \in \mathbf{N}} \in \mathbf{R}_+^{\mathbf{N}}$
\FOR{$n = 0, 1, \dots, n_{it}-1$}
    \STATE $b_n \gets p b_{n-1} + \gradsto{x_{n-1}}$
    \STATE $x_{n} \gets x_{n-1} - s\left( \gradsto{x_{n-1}} + p b_n \right)$
\ENDFOR
\STATE \textbf{output:} $x_{n_{it}}$
\end{algorithmic}
\end{algorithm}

\paragraph{Batch size influence} In Section~\ref{sec:xp_linear_reg}, we observed that contrarily to data generated uniformly onto the sphere, the presence of correlation inside data coincides with high \ref{ass:racoga} values and good performance of SNAG (Algorithm~\ref{alg:SNAG}) with batch size $1$ compared to other algorithms. Now in the same experimental setting, we study the impact of varying batch size. According to Remark~\ref{rem:performance_scaling}, in the case of correlated data, we should observe a decrease of the performance up to a certain batch size. On Figure~\ref{fig:lin_reg_corr_batch}, we observe this phenomenon. We see that we can multiply the batch size by a factor $10$, and keeping the same performance. If performing parallelization, this results in $10$ times faster computations. However, when increasing batch size from $10$ to $25$, we lose performance, inducing paralellization will not results in $2.5$ times faster computation. This phenomenon is often referred to as \textit{performance saturation}, see \citet{ma2018power, liu2018accelerating}. On Figure~\ref{fig:lin_reg_iso_batch}, we observe that conversely, increasing batch size improve the performance. Figure~\ref{fig:cv_linear_regression_batch_size} is thus consistent with our theoretical findings, see Remark~\ref{rem:performance_scaling}.

\paragraph{Under-parameterization with a realistic dataset}
The Boston dataset \citep{harrison1978hedonic} concerns house pricing in the Boston area. This dataset can be used for linear regression, where we aim to predict house prices using $13$ variables such as tax rates. There are $200$ data, so $N = 200$ and $d = 13$, which means we are in an under-parameterized regime. In this case, we can hardly expect \ref{SGC} to hold, as the noise of the gradient has no reason to be zero at the minimizers of $f$. On Figure~\ref{fig:lin_reg_boston}, we tackle the linear regression problem on this dataset, and we observe that despite the under-parameterized setting, SNAG (Algorithm~\ref{alg:SNAG}) appears to outperform GD (Algorithm~\ref{alg:GD}), NAG (Algorithm~\ref{alg:NAG}) and SGD (Algorithm~\ref{alg:SGD}). 
\ref{ass:racoga} values can be either very large (\textit{e.g.} for aggressive, \textit{i.e.} small $\lambda$), but also close to $-\frac{1}{2}$, which corresponds to high anti-correlation.
In this under-parameterized regime, the role of \ref{ass:racoga} is less clear. A possible explanation of the good performance of SNAG could be the variance reduction effect of momentum, see~\citet{gao2024non}.
 \begin{figure}
        \centering
        \includegraphics[scale=0.5]{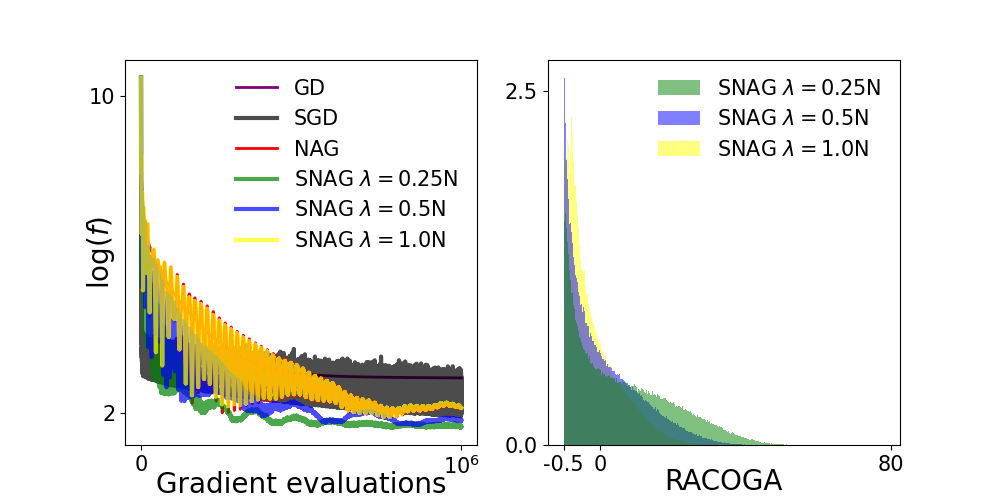}
        \caption{Illustration of the convergence speed of GD (Algorithm \ref{alg:GD}), NAG (Algorithm \ref{alg:NAG}), SGD (Algorithm \ref{alg:SGD}) and SNAG (Algorithm \ref{alg:SNAG}) for the linear regression problem with the Boston dataset. The parameters of SNAG are choosen accordingly to Equation~\ref{eq:app_lin_reg_param}. Some \ref{ass:racoga} values are high, while other are also close to the worst anti-correlation case $-\frac{1}{2}$. Note that SNAG appears to converge faster than the other algorithms when choosing $\lambda$ small enough.}
        \label{fig:lin_reg_boston}
    \end{figure}

\subsection{Neural networks}\label{app:neural_net_details}

For a dataset $\{a_i,b_i \}_{i=1}^N \in \R^d \times \R$, we want to solve a classification problem formulated as Problem~\ref{pbm:classification}.
\begin{equation}\label{pbm:classification}\tag{C}
    f(x) := \frac{1}{N}\sum_{i=1}^N f_i(x) := \frac{1}{N}\sum_{i=1}^N CROSS(x;a_i,b_i).
\end{equation}
Where $CROSS()$ is a cross entropy loss, as it is implemented in Pytorch with the function \textit{nn.CrossEntropyLoss()}.
\paragraph{Classification problem} In Section~\ref{sec:xp_cnn}, we considered two classification problems. The first one involves the classic CIFAR10 dataset \citep{krizhevsky2009learning}, that contains $60000$ color images (dimension $32 \times 32$) with $10$ different labels. See Figure~\ref{fig:cifar10_tsne} to see a data visualisation of the dataset, taken from \citet{balasubramanian2022contrastivelearningoodobject}. This dataset serves as our correlated dataset. 
For the second classification problem, we generated data drawn uniformly onto the $d$-dimensional sphere, where $d$ is the same dimension as the image of CIFAR10, \textit{i.e.} $3*32*32= 3072$. We created $2$ different labels depending on the positivity of the first coordinate of each data, which remains to associate a different label depending on which hemisphere belong each data, see Figure~\ref{fig:sphere_data} for a $3$d visualisation. This dataset is named SPHERE.

\begin{figure}[!ht]
    \begin{subfigure}[b]{0.5\textwidth}
        \centering
        \includegraphics[scale = 0.3]{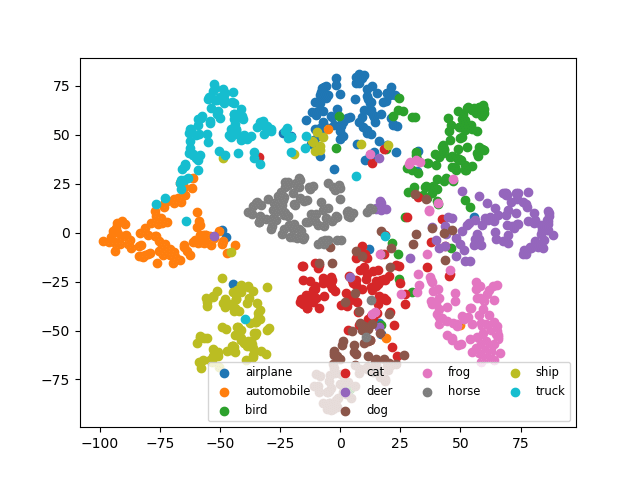}
        \caption{CIFAR10 data vizualisation}
        \label{fig:cifar10_tsne}
    \end{subfigure}
    \hfill
    \begin{subfigure}[b]{0.5\textwidth}
        \centering
        \includegraphics[scale=0.3]{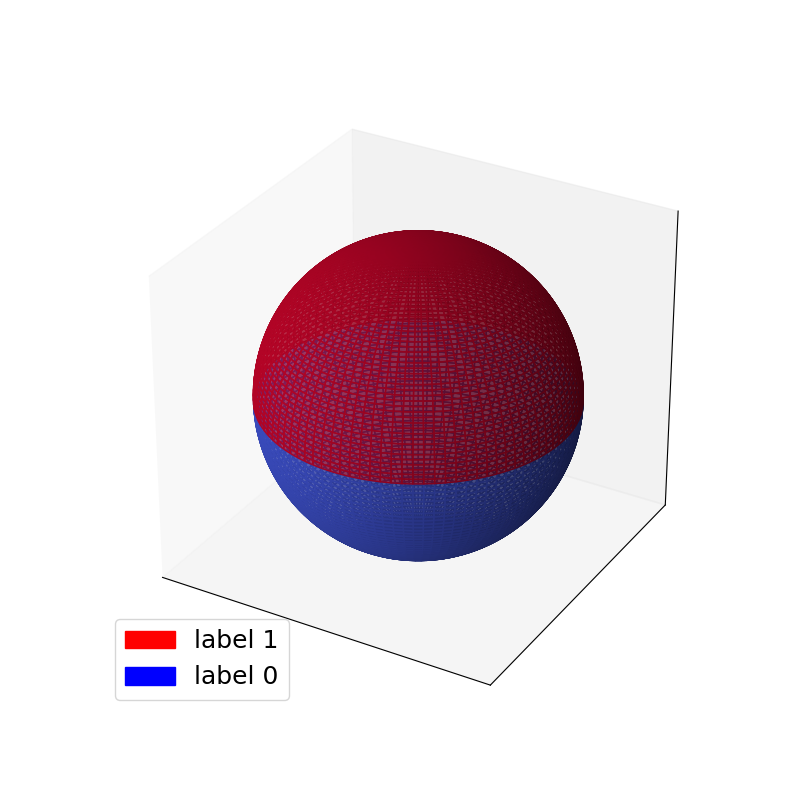}
        \caption{Sphere classification problem}
        \label{fig:sphere_data}
    \end{subfigure}
    \caption{Illustration of the two classification problems we consider in Section~\ref{sec:xp_cnn}. On the left part, wee see a 2D vizualisation of CIFAR10 data set, proposed by \citet{balasubramanian2022contrastivelearningoodobject}. On the right, we illustrate SPHERE dataset
    on the $3d$-sphere, where each hemisphere correspond to a different label.} 
    \label{fig:visualisation}
\end{figure}

\paragraph{Network architecture} For the classification problem involving the CIFAR10 dataset, we use a Convolutionnal Neural Network (CNN, \citet{lecun1998gradient}). For the classification problem involving the spherical data, we use a Multi Layer Perceptron (MLP, \citet{rumelhart1986learning}). CNN are efficient architecture when it comes to image classification, at is exploit local information of the images. However, this architecture makes less sense for our classification problem on the sphere. This model indeed performed poorly in our experiment, and so our choice of a MLP architecture. We detail the architectures on Figure~\ref{fig:architecture}. 
\begin{figure}[!ht]
   \begin{subfigure}[b]{0.5\textwidth}
        \centering
        \includegraphics[scale=0.35]{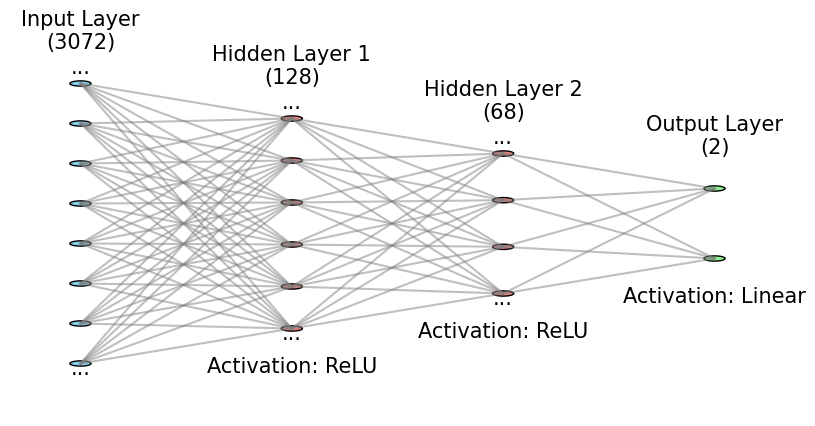}
        \caption{MLP - 401,730 parameters}
        \label{fig:mlp_architecture}    
    \end{subfigure}
    \hfill
     \begin{subfigure}[b]{0.5\textwidth}
        \centering
        \includegraphics[scale=0.17]{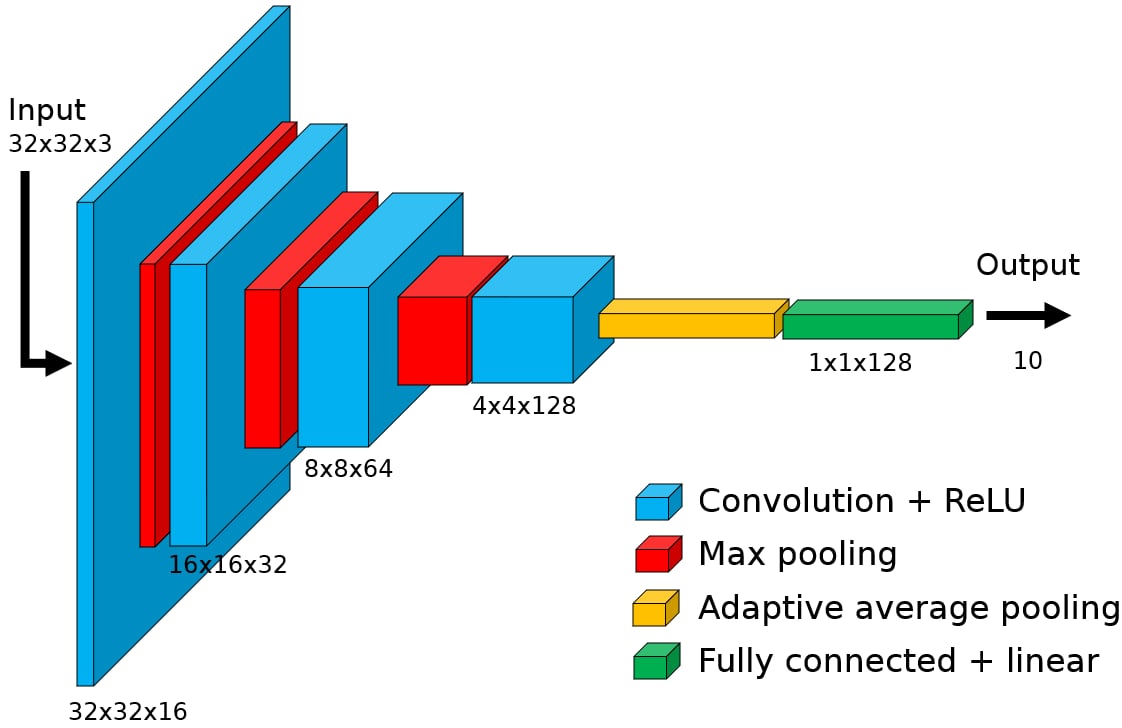}
        \caption{CNN - 98,730 parameters}
        \label{fig:cnn_architecture}
    \end{subfigure}
    \caption{
    Scheme of the architecture of the MLP and the CNN used in our experiments.
    } 
    \label{fig:architecture}
\end{figure}

\paragraph{Algorithms and parameters} We ran the experiments using the \textit{Pytorch} library. We used the \textit{Pytorch} implementation to run the optimization algorithms, through the function \textit{torch.optim.SGD}. This function contains a \textit{nesterov = True} argument, which allows to run Algorithm~\ref{alg:SNAG:ML}. Note that this is indeed a formulation of the Nesterov algorithm, see Appendix~\ref{app:different_form_nesterov}. The detailed parameters used to run the experiment, determined by grid search, are displayed on Figure~\ref{fig:cnn} are presented on  Table~\ref{tab:param_xp_cnn}, together with the final test accuracy, averaged over $10$ initialisations.

\begin{table}[!ht]
\centering
\begin{tabular}{l|lll|lll}
 & \multicolumn{3}{c|}{MLP (SPHERE)}                             & \multicolumn{3}{c}{CNN (CIFAR10)}          \\ \hline
 & $s$ & $p$ & \multicolumn{1}{c|}{ Accuracy test(\%)} & $s$  &  $p$ & Accuracy test(\%) \\ \hline
GD   & 3   &    -     & 49.72                                        & 4    &  -       & 17.54                    \\ \hline
SGD  & 0.3 &     -    & 80.6                                        & 0.3  & -        & 65.40                    \\ \hline
NAG  & 2   & 0.9      & 50.02                                        & 2    & 0.7      & 17.07                    \\ \hline
SNAG & 0.1 & 0.9      & \textbf{88.73 }                                       & 0.05 & 0.9      & \textbf{70.88 }                   \\ \hline
\end{tabular}
\caption{Parameters (the learning rate $s$ and the momentum $p$) used to run the experiments presented in Section~\ref{sec:xp_cnn}, together with the final accuracy test in 
percent averaged over $10$ different initializations. For precise formulation of the algorithms, see Algorithm~\ref{alg:GD} (GD), Algorithm~\ref{alg:SGD} (SGD), Algorithm~\ref{alg:NAG:ML} (NAG) and Algorithm~\ref{alg:SNAG:ML} (SNAG).}
\label{tab:param_xp_cnn}
\end{table}

\paragraph{Single run and \ref{ass:racoga} along iterations} As a complement, we display on Figure~\ref{fig:single_run} a slightly different view of Figure~\ref{fig:cnn}. On the left part of Figure~\ref{fig:single_run_mlp}-Figure~\ref{fig:single_run_cnn}, we display the typical behaviour of one single run of optimization algorithms, namely without averaging for several initialisations. As we can expect, we observe a higher variability, although the general behaviour remains similar. On the right part of Figure~\ref{fig:single_run_mlp}-Figure~\ref{fig:single_run_cnn}, we displayed the evolution of \ref{ass:racoga} values along the iterations, instead of the histogram distribution of Figure~\ref{fig:cnn}, which does not keep any temporal information. For correlated data, we observe that \ref{ass:racoga} values decrease when iterations converge. This phenomenon can be interpreted as the convergence of the iterations to a minimum with low curvature which is related to a low \ref{ass:racoga} value (see Appendix~\ref{app:racoga_curvature} for more details).

\begin{figure}[!ht]
   \begin{subfigure}[b]{0.5\textwidth}
        \centering
        \includegraphics[scale=0.3]{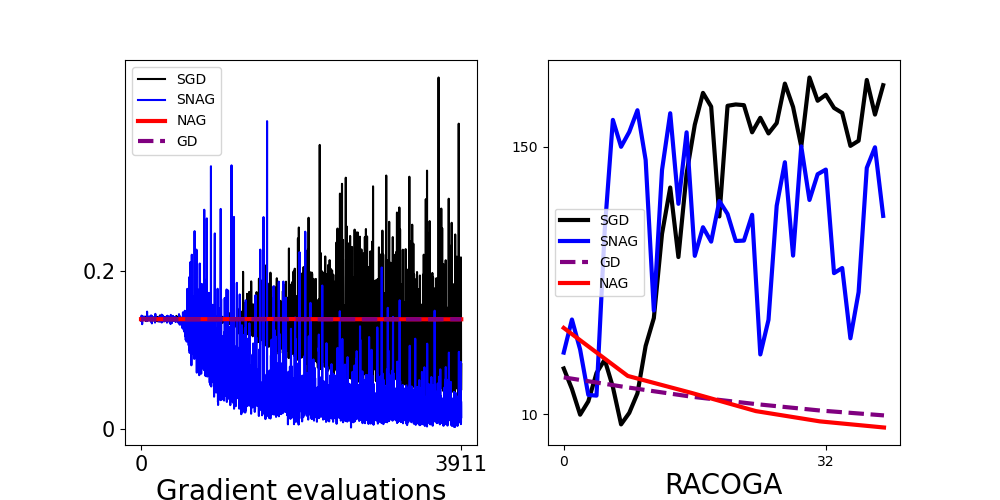}
        \caption{SPHERE-MLP}
        \label{fig:single_run_mlp}    
    \end{subfigure}
    \hfill
     \begin{subfigure}[b]{0.5\textwidth}
        \centering
        \includegraphics[scale=0.3]{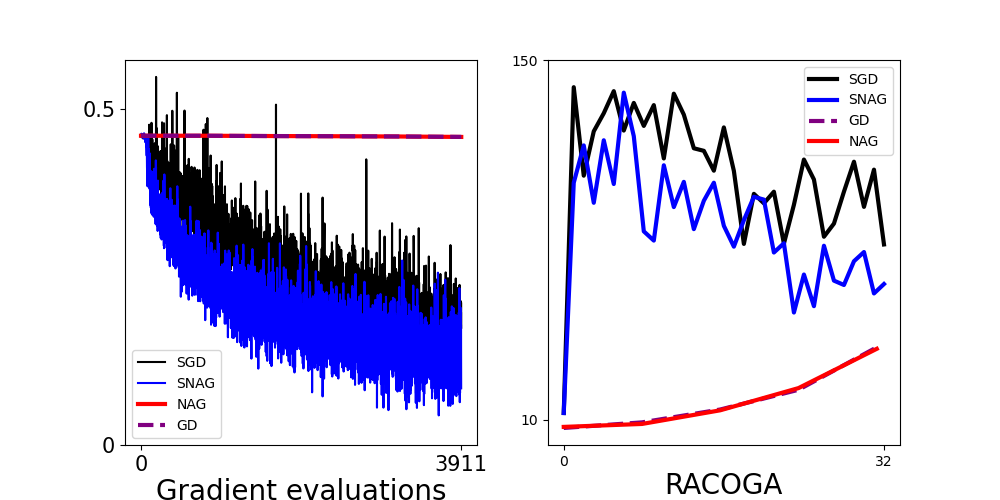}
        \caption{CIFAR10-CNN}
        \label{fig:single_run_cnn}
    \end{subfigure}
    \caption{
   Illustration of the convergence speed of on run of GD (Algorithm~\ref{alg:GD} batch size N ), SGD
(Algorithm~\ref{alg:SGD}, batch size 64), NAG (Algorithm~\ref{alg:NAG}) and SNAG (Algorithm~\ref{alg:SNAG}, batch size 64), together
with a display of the \ref{ass:racoga} values taken along the path, averaged over 5 different initializations.
On the left, we use a MLP, and data are generated by an isotropic law and are fewly correlated. On
the right we use a CNN, and the dataset used is CIFAR10.
    } 
    \label{fig:single_run}
\end{figure}

\begin{algorithm}
\caption{Root Mean Square Propagation (RMSprop)}
\begin{algorithmic}[1]\label{alg:RMSprop}
    \STATE \textbf{Input:}  $\alpha, \beta, \epsilon > 0$, $x_0 \in \R^d$
    \FOR{$n = 0,\dots,n_{it}-1$}
        \STATE $g_n = \tilde \nabla_K (x_n)$
        \STATE $v_{n+1} = \beta v_n + (1 - \beta) g_n^2$
        \STATE $x_{n+1} = x_n - \alpha \frac{g_n}{\sqrt{v_{n+1}} + \epsilon}$
    \ENDFOR
\end{algorithmic}
\end{algorithm}

\begin{algorithm}
\caption{Adaptive Moment Estimation (ADAM)}
\begin{algorithmic}[1]\label{alg:ADAM}
\STATE \textbf{Input:} $\alpha, \beta_1, \beta_2, \epsilon > 0$, initial parameters $x_0 \in \R^d$, $m_0 = v_0 = 0$
\FOR{$n = 0,\dots,n_{it}$}
    \STATE $g_n = \tilde \nabla_K (x_n)$
    \STATE $m_{n+1} = \beta_1 m_{n} + (1 - \beta_1) g_n$
    \STATE $v_{n+1} = \beta_2 v_{n} + (1 - \beta_2) g_n^2$
    \STATE $\hat{m}_{n+1} = \frac{m_{n+1}}{1 - \beta_1^{n+1}}$
    \STATE $\hat{v}_{n+1} = \frac{v_{n+1}}{1 - \beta_2^{n+1}}$
    \STATE $x_{n+1} = x_{n} - \alpha \frac{\hat{m}_{n+1}}{\sqrt{\hat{v}_{n+1}} + \epsilon}$
    \ENDFOR
\end{algorithmic}
\end{algorithm}

\paragraph{Comparison with ADAM and RMSprop} RMSprop \citep{hinton2012rmsprop} (Algorithm~\ref{alg:RMSprop}) and ADAM \citep{kingma2014adam} (Algorithm~\ref{alg:ADAM}) are popular algorithms when it comes to optimize neural networks. RMSprop is similar to gradient descent, at the difference that, grossly, it divides component-wise the gradient by an average of the squared norm of the past gradients. 
There exists a variant of RMSprop that incorporates momentum.
Adam combines both techniques of RMSprop and momentum, plus other mechanisms such as bias corrections.

On Figure~\ref{fig:cnn_ADAM_RMS}, compared to Figure~\ref{fig:cnn}, we add the training convergence curve and \ref{ass:racoga} values of RMSprop and ADAM. On the CIFAR10 dataset, both algorithms do not converge faster than SNAG, and share similar \ref{ass:racoga} values. Interestingly for the SPHERE 
dataset, both algorithms are significantly faster than others. We observe that all the algorithms are, at the begining of the optimization process, stuck in a tray. SNAG steps out of this tray faster than SGD, and RMSprop and ADAM step out of it even faster. One may think that the normalization by the average of squared gradients induces larger stepsize and boost the convergence speed, as in this tray the gradient values are low. The average test accuracy at the end of the training for ADAM (Algorithm~\ref{alg:ADAM}) is $93.73 \%$ for MLP, $69.12 \%$ for CNN. The average test accuracy at the end of the training for RMSprop (Algorithm~\ref{alg:RMSprop}) is $94.02\%$ for MLP, $67.31 \%$ for CNN. 

\begin{figure}[!ht]
       \begin{subfigure}[b]{0.5\textwidth}
        \centering
        \includegraphics[scale=0.3]{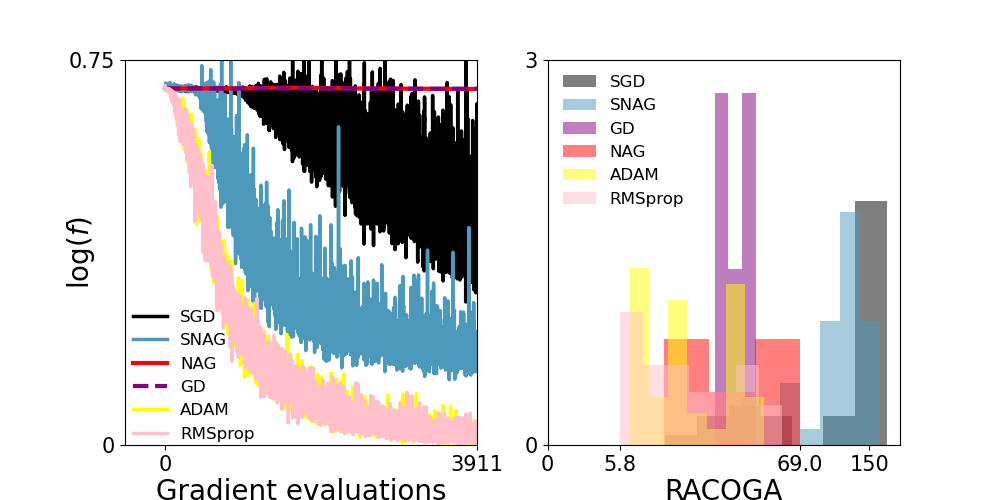}
        \caption{SPHERE}
        \label{fig:ADAM_RMS_cnn_cifar10}

    \end{subfigure}
    \hfill
     \begin{subfigure}[b]{0.5\textwidth}
        \centering
        \includegraphics[scale=0.3]{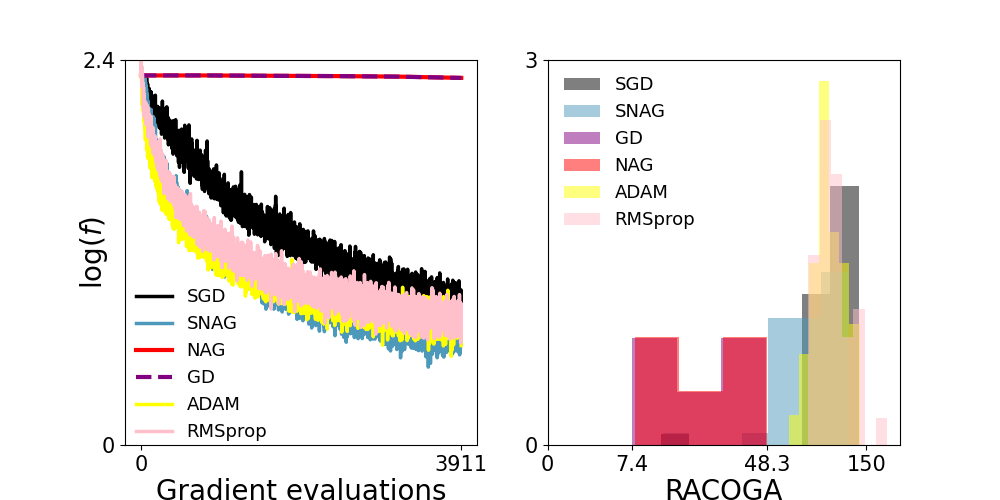}
                \caption{CIFAR10}

        \label{fig:ADAM_RMS_cnn_sphere}
    \end{subfigure}
    \caption{Illustration of the convergence speed of GD (Algorithm~\ref{alg:GD}), SGD (Algorithm~\ref{alg:SGD}, batch size $64$), NAG (Algorithm~\ref{alg:SNAG:ML}, full batch) and SNAG (Algorithm \ref{alg:SNAG:ML}, batch size $64$), ADAM (Algorithm~\ref{alg:ADAM}) and RMSprop (Algorithm~\ref{alg:RMSprop}) averaged over $10$ different initializations, together with an histogram distribution of \ref{ass:racoga}  values taken along the optimization path, averaged over $10$ different initialisations. On the left, we use a MLP to classify fewly correlated data sampled from an isotropic law.
    On the right we use a CNN to classify CIFAR10 images. Note that if ADAM and RMSprop are both better than other algorithms for the SPHERE experiment, they are not faster than SNAG in the CIFAR10 experiment.} 
    \label{fig:cnn_ADAM_RMS}
\end{figure}

\paragraph{Computation time} Using the Python library \textit{time}, the computation time needed to choose the best parameters
for our two models (CNN for CIFAR10 and MLP for SPHERE) have been saved. 
Moreover, the computational time needed to generate Figures~\ref{fig:cv_linear_regression},~\ref{fig:cnn},~\ref{fig:cv_linear_regression_batch_size} and~\ref{fig:cnn_ADAM_RMS} is added to our computational budget.
We saved the computation time needed to train the models and to compute the \ref{ass:racoga} values, $10$ times per algorithms (due to the $10$ initializations) and for both networks. Note that for the stochastic algorithms, \ref{ass:racoga} was computed only every $100$ iterations, because of the heavy computation time it demands. The experiments for the linear regression problem take less than one minute of computation. Experiments with SNAG, NAG, SGD and GD took approximately $4$ hours. Additional experiments with ADAM and RMSprop took approximately $8$ hours. Note that ADAM requires to tune 3 hyperparameters with the grid search, making this step significantly longer.
The total computation time needed for all these experiments is approximately $12$ hours of computation on a GPU NVIDIA A100 Tensor Core.

\paragraph{Additional datasets results}

On Figure~\ref{fig:cnn_more_datasets}, we present the convergence curves and RACOGA values for SGD (Algorithm~\ref{alg:SGD}) and SNAG (Algorithm~\ref{alg:SNAG:ML}) on various datasets, including hand-written numbers (MNIST), pictures of clothes (FashionMNIST~\cite{xiao2017}) and Kuzushiji characters (KMNIST~\cite{clanuwat2018deep}) and hand-written characters (EMNIST~\cite{cohen2017emnist}). The used learning rate for SGD is $s = 0.1$ for MNIST and EMNIST and $s = 0.25$ for FashionMNIST and KMNIST. The learning rate of SNAG is $s = 0.05$ for MNIST and EMNIST and $s = 0.1$ for Fashion MNIST and KMNIST. The momentum of SNAG (Algorithm~\ref{alg:SNAG:ML}) is $p = 0.8$ for MNIST and FashionMNIST, and $p = 0.9$ for KMNIST and EMNIST. These quantities have been chosen after a grid search to find the parameters that maximize the test accuracy of the learned model. There were $3$ epochs for the training.

Note that the RACOGA values are large for all these training paths. Moreover, in all these scenarios, we observe that the convergence of SNAG is faster than the convergence of SGD, although it is less clear for KMNIST. These additional experimental validations support our theoretical results.

\begin{figure}[!ht]
    \begin{subfigure}[b]{0.5\textwidth}
    \centering
    \includegraphics[scale=0.3]{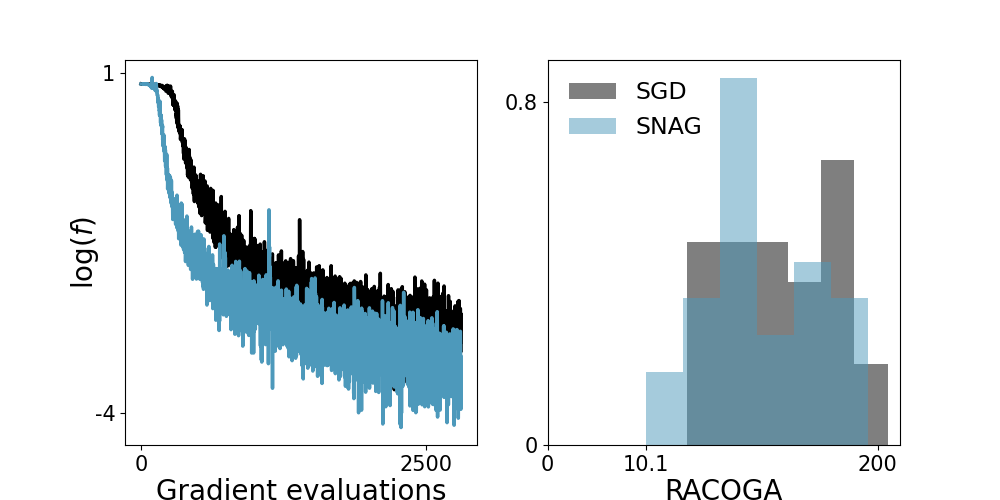}
    \caption{MNIST}
    \end{subfigure}
    \hfill
    \begin{subfigure}[b]{0.5\textwidth}
        \centering
        \includegraphics[scale=0.3]{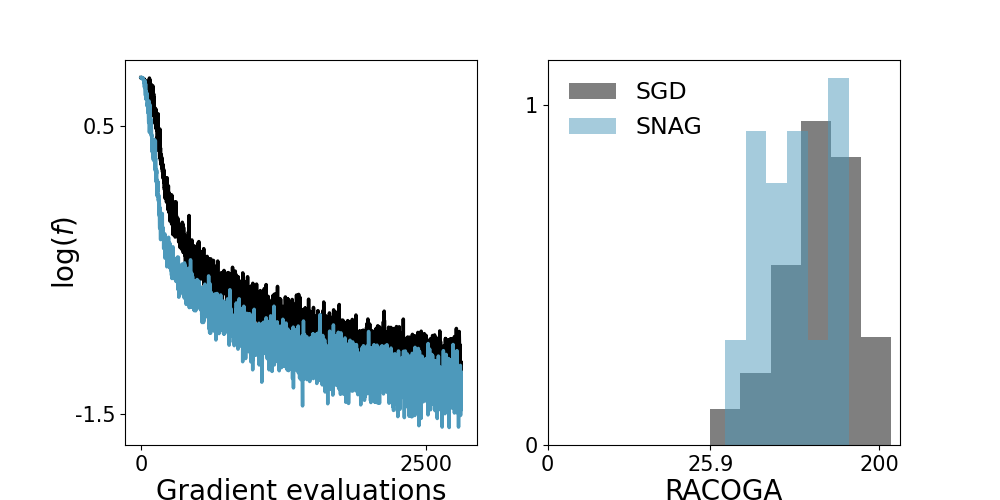}
        \caption{FashionMNIST}
    \end{subfigure}
    \begin{subfigure}[b]{0.5\textwidth}
    \centering
    \includegraphics[scale=0.3]{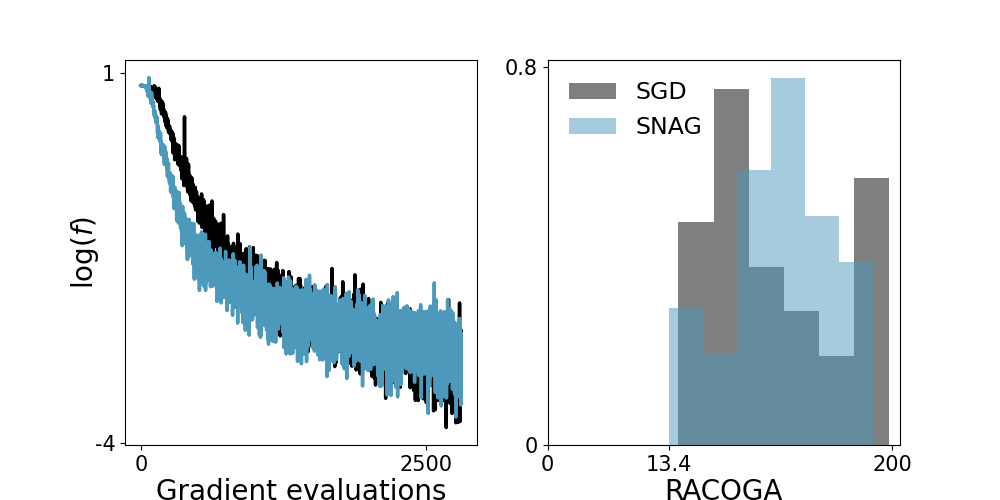}
    \caption{KMNIST}
    \end{subfigure}
    \begin{subfigure}[b]{0.5\textwidth}
    \centering
    \includegraphics[scale=0.3]{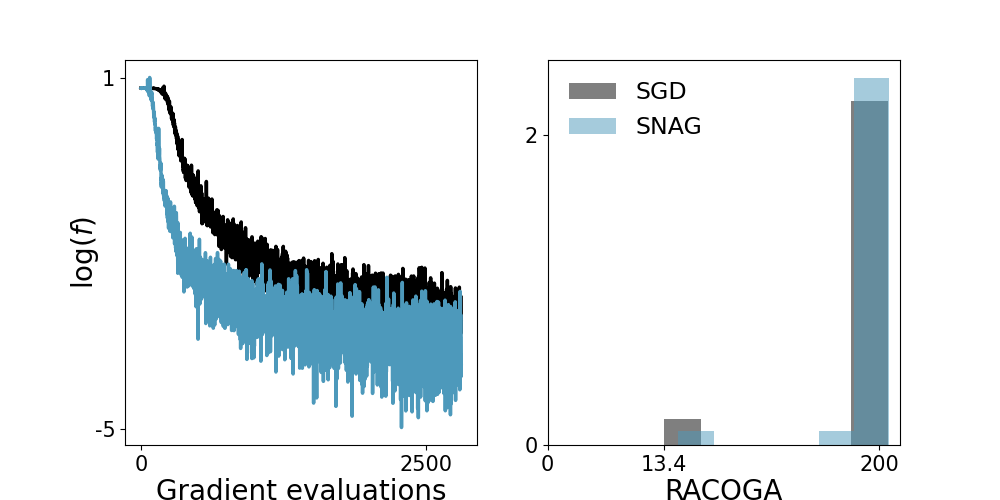}
    \caption{EMNIST}
    \end{subfigure}
    \caption{Illustration of the convergence speed of SGD (Algorithm~\ref{alg:SGD} and SNAG (Algorithm \ref{alg:SNAG:ML}, batch size $64$), averaged over $10$ different initializations, together with an histogram distribution of \ref{ass:racoga} values taken along one optimization path. We use a CNN described on Figure~\ref{fig:cnn_architecture}, only changing the input layer with the dimension of images $1\times 28\times 28$. Note that SNAG is faster than SGD, although it is less clear for KMNIST, and the RACOGA values are large for all these datasets.}
    \label{fig:cnn_more_datasets}
\end{figure}

\subsection{Logistic regression}
On Figure~\ref{fig:logistic_regression}, we present the convergence curves and \ref{ass:racoga} values for SGD (Algorithm~\ref{alg:SGD}) and SNAG (Algorithm~\ref{alg:SNAG:ML}) on images extracted from CIFAR10. Images of CIFAR10 have $10$ classes and are of dimension $3,072$, so the logistic regression has $30,730$ parameters. In order to observe the convergence curves on under-parametrized regime, we run the optimization on all the CIFAR10 dataset ($60,000$ images). To analyse the over-parametrized regime, we run the optimization on a subset of $10,000$ images from CIFAR10. The learning rate of SNAG is $s = 0.2$ and the momentum $p = 0.7$. The learning rate for SGD is $s = 1.0$. These parameters have been chosen after a grid search to find the parameters that maximize the test accuracy of the learned model. There were $5$ epochs for the training.

Note that for logistic regression the RACOGA values are positive but small (compared to the deep learning experiments). Therefore, we do not observe any acceleration of SNAG over SGD.

\begin{figure}[!ht]
    \begin{subfigure}[b]{0.5\textwidth}
    \centering
    \includegraphics[scale=0.3]{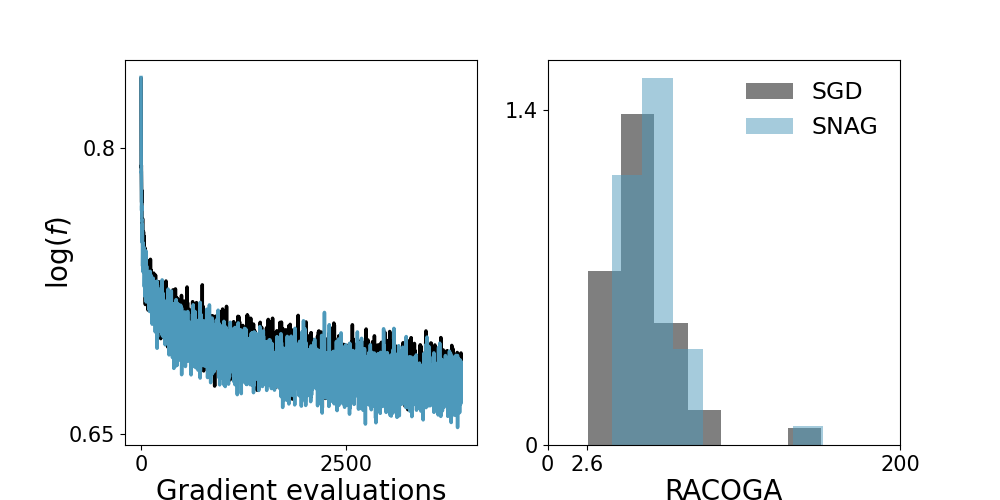}
    \caption{Under-parametrized ($60,000$ data)}
    \end{subfigure}
    \hfill
    \begin{subfigure}[b]{0.5\textwidth}
        \centering
        \includegraphics[scale=0.3]{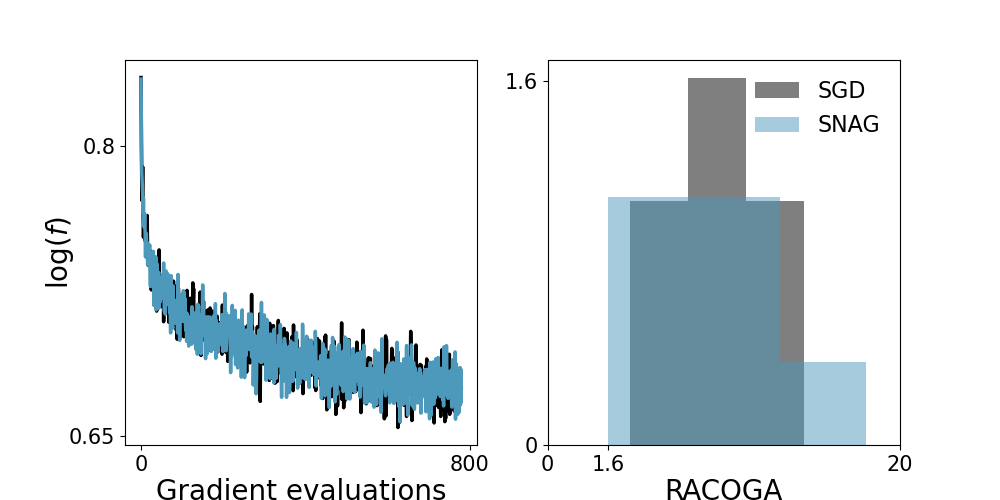}
        \caption{Over-parametrized ($10,000$ data)}
    \end{subfigure}
    \caption{Illustration of the convergence speed of SGD (Algorithm~\ref{alg:SGD} and SNAG (Algorithm \ref{alg:SNAG:ML}, batch size $64$), averaged over $10$ different initializations, together with an histogram distribution of \ref{ass:racoga} values taken along one optimization path. We run a logistic regression ($30,730$ parameters) on images extracted from CIFAR10. Note that SNAG and SGD have the same speed, which may be caused by the non convexity of the problem, or by the fact that the RACOGA values might be not large enough to observe an acceleration of SNAG.}
    \label{fig:logistic_regression}
\end{figure}

\section{Background : convex optimization and the Nesterov algorithm}\label{app:backgroud}
In this section, we give the reader some optimization background for completeness of our paper.  First, we expose known results related to the convergence of the deterministic counterparts of SGD and SNAG, and then we exhibit different forms of the Nesterov algorithm that can be found in the literature.
\subsection{Performance of GD and NAG}\label{appendix:deterministic_gd_nag}
We present the speed of convergence of GD (Algorithm~\ref{alg:GD}) and NAG (Algorithm~\ref{alg:NAG}).
The two following theorems are well known results demonstrated by \citet{nesterov1983AMF,nesterovbook}.
\begin{algorithm}
\caption{Gradient Descent (GD)}
\begin{algorithmic}[1]\label{alg:GD}
\STATE \textbf{input:} $x_0 \in \R^d$, $s > 0$
\FOR{$n = 0, 1, \dots, n_{it}-1$}
    \STATE $x_{n+1} = x_n - s \nabla f(x_n)$ 
\ENDFOR
\STATE \textbf{output:} $x_{n_{it}}$
\end{algorithmic}
\end{algorithm}
\begin{algorithm}
\caption{Nesterov Accelerated Gradient (NAG)}
\begin{algorithmic}[1]\label{alg:NAG}
\STATE \textbf{input:} $x_0, z_0 \in \R^d$, $s > 0$, $\beta \in [0,1]$, $(\alpha_n)_{n \in \mathbf{N}} \in [0,1]^{\mathbf{N}}$, $(\eta_n)_{n \in \mathbf{N}} \in \mathbf{R}_+^{\mathbf{N}}$
\FOR{$n = 0, 1, \dots, n_{it}-1$}
    \STATE $y_n \gets \alpha_n x_n +(1-\alpha_n)z_n$ 
    \STATE $x_{n+1} \gets y_n - s \nabla f(y_n)$
    \STATE $z_{n+1} \gets \beta z_n + (1-\beta)y_n - \eta_n \nabla f(y_n)$
\ENDFOR
\STATE \textbf{output:} $x_{n_{it}}$
\end{algorithmic}
\end{algorithm}

\begin{theorem}\label{th:cv_speed_gd}
Let $f : \R^d \to \R$ be $L$-smooth, $\{ x_n \}_{n \in \N}$ be generated by Algorithm \ref{alg:GD} with stepsize $s = \frac{1}{L}$. Let $\nu_{\epsilon} \in \N$ the smallest integer such that $\forall n \geq \nu_{\epsilon},~ f(x_n)-f^\ast \leq \varepsilon$.
\begin{enumerate}
    \item If $f$ is convex, we have
    \begin{equation}
        \nu_{\epsilon} \leq  \frac{L}{2}\frac{\norm{x_0-x^\ast}^2}{\varepsilon}.
    \end{equation}
        \item If $f$ is $\mu$-strongly convex, we have

    \begin{equation}
  \nu_{\epsilon} \leq \frac{L}{\mu}\log \left( \frac{f(x_0)-f^\ast}{\varepsilon}
    \right).
    \end{equation}
\end{enumerate}
\end{theorem}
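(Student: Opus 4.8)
The plan is to derive both bounds from the standard \textbf{descent lemma} that follows from $L$-smoothness at the step size $s = \frac{1}{L}$. Plugging $x_{n+1} = x_n - \frac{1}{L}\nabla f(x_n)$ into the smoothness upper bound (Definition~\ref{def:lsmooth}) and simplifying the quadratic in $\grad{x_n}$ gives
\begin{equation}
f(x_{n+1}) \leq f(x_n) - \frac{1}{2L}\norm{\grad{x_n}}^2,
\end{equation}
so the sequence $\delta_n := f(x_n) - f^\ast$ is non-increasing. This single inequality is the backbone of both cases.

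For the \textbf{convex case}, I would track the distance to a minimizer $x^\ast$. Expanding $\norm{x_{n+1}-x^\ast}^2$, the cross term is controlled by convexity via $\dotprod{\grad{x_n}, x_n - x^\ast} \geq \delta_n$, while the squared-gradient term is absorbed using the descent lemma rewritten as $\frac{1}{L^2}\norm{\grad{x_n}}^2 \leq \frac{2}{L}(\delta_n - \delta_{n+1})$. Combining these yields the clean recursion $\frac{2}{L}\delta_{n+1} \leq \norm{x_n-x^\ast}^2 - \norm{x_{n+1}-x^\ast}^2$. Summing this telescoping inequality over the first $N$ steps and then using that $\delta_n$ is non-increasing (so that $N\delta_N \le \sum_{n=1}^N \delta_n$) produces $\delta_N \leq \frac{L\norm{x_0-x^\ast}^2}{2N}$. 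Requiring the right-hand side to be at most $\varepsilon$ gives the stated bound on $\nu_{\epsilon}$.

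For the \textbf{strongly convex case}, the key extra ingredient is the Polyak--\L ojasiewicz inequality $\norm{\grad{x}}^2 \geq 2\mu(f(x)-f^\ast)$, which I would obtain by minimizing the right-hand side of the strong-convexity lower bound (Definition~\ref{def:cvx}) over $y$ and using $\min_y f(y) = f^\ast$. Substituting this into the descent lemma turns it into the linear contraction $\delta_{n+1} \leq \left(1 - \frac{\mu}{L}\right)\delta_n$, hence $\delta_n \leq \left(1-\frac{\mu}{L}\right)^n \delta_0$. Taking logarithms and using the elementary bound $-\log\left(1-\frac{\mu}{L}\right) \geq \frac{\mu}{L}$ converts the geometric decay into the iteration count $\frac{L}{\mu}\log(\delta_0/\varepsilon)$.

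I do not expect a serious obstacle here, since this is a textbook argument; the only points requiring care are the bookkeeping in the telescoping sum (in particular invoking the monotonicity of $\delta_n$ to pass from the averaged bound to a bound on $\delta_N$ itself) and the logarithmic estimate $-\log(1-\mu/L)\ge \mu/L$ needed to present the strongly convex rate in the stated closed form.
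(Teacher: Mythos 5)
Your proof is correct, and both cases follow exactly the standard route: the descent lemma $f(x_{n+1}) \leq f(x_n) - \frac{1}{2L}\norm{\nabla f(x_n)}^2$, the telescoping distance recursion combined with monotonicity of $f(x_n)-f^\ast$ for the convex case, and the Polyak--\L{}ojasiewicz contraction with the estimate $-\log(1-\mu/L) \geq \mu/L$ for the strongly convex case. Note that the paper does not actually prove Theorem~\ref{th:cv_speed_gd}; it defers to \citet{nesterov1983AMF,nesterovbook} and \citet{garrigos2023handbook}, and your argument is precisely the textbook proof contained in those references, so your proposal fills the gap with essentially the same approach the paper relies on.
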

The result for convex $f$ can be found in \citet{garrigos2023handbook}.
\begin{theorem}\label{th:cv_speed_nag}
Let $f : \R^d \to \R$ be $L$-smooth, $\{ x_n \}_{n \in \N}$ be generated by Algorithm \ref{alg:NAG} with stepsize $s = \frac{1}{L}$. Let $\nu_{\epsilon} \in \N$ the smallest integer such that $\forall n \geq \nu_{\epsilon},~ f(x_n)-f^\ast \leq \varepsilon$.
\begin{enumerate}
    \item If $f$ is convex, choosing $\alpha_n = \frac{n-1}{n+1}, \beta = 1, \eta_n = \frac{1}{L}\frac{n+1}{2}$, we have
    \begin{equation}
        \nu_{\epsilon} \leq \sqrt{2L}\frac{\norm{x_0-x^\ast}}{\sqrt{\varepsilon}}.
    \end{equation}
        \item If $f$ is $\mu$-strongly convex, choosing $\alpha_n = \frac{1}{1+\sqrt{\frac{\mu}{L}}},  \beta = 1-\sqrt{\frac{\mu}{L}}, \eta_n = \frac{1}{\sqrt{\mu L}}$, we have
    \begin{equation}
        \nu_{\epsilon} \leq \sqrt{\frac{L}{\mu}}\log \left( \frac{2(f(x_0)-f^\ast)}{\varepsilon} \right).
    \end{equation}
\end{enumerate}
\end{theorem}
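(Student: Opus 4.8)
The plan is to exploit the fact that deterministic NAG (Algorithm~\ref{alg:NAG}) is exactly SNAG (Algorithm~\ref{alg:SNAG}) run with the \emph{exact} gradient, i.e. with the estimator $\tilde\nabla_K = \nabla f$. For such an estimator $\mathbb{E}[\norm{\tilde\nabla_K(x)}^2] = \norm{\grad{x}}^2$, so the \ref{SGC} holds trivially with $\rho_K = 1$. In the $\mu$-strongly convex case the parameters $s=\frac1L$, $\eta_n=\frac{1}{\sqrt{\mu L}}$, $\beta=1-\sqrt{\mu/L}$, $\alpha_n=\frac{1}{1+\sqrt{\mu/L}}$ coincide term by term with those of Theorem~\ref{thm:cv_snag_exp} evaluated at $\rho_K=1$, and the resulting bound $\sqrt{L/\mu}\log(2(f(x_0)-f^\ast)/\varepsilon)$ is precisely the one claimed here; so this case follows directly by invoking Theorem~\ref{thm:cv_snag_exp}. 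In the convex case the quantities $s=\frac1L$, $\eta_n=\frac{n+1}{2L}$ and $\beta=1$ again match, but the averaging weight $\alpha_n=\frac{n-1}{n+1}$ differs from the one appearing in Theorem~\ref{thm:cv_snag_exp}, so a direct reduction is not available and I would instead give a self-contained Lyapunov argument.

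For the convex case I would fix an auxiliary growth sequence $A_n$ of order $n^2$ with $A_0=0$, tied to $\eta_n$ through a recurrence of the type $A_{n+1}-A_n = L\eta_n$, and consider the potential $\mathcal{E}_n := A_n(f(x_n)-f^\ast) + \frac{L}{2}\norm{z_n-x^\ast}^2$. The three ingredients are: (i) the descent inequality from $L$-smoothness with $s=\frac1L$, giving $f(x_{n+1})\le f(y_n)-\frac{1}{2L}\norm{\nabla f(y_n)}^2$; (ii) two first-order convexity inequalities at $y_n$, one against $x_n$ and one against $x^\ast$, which turn $\langle\nabla f(y_n),y_n-x_n\rangle$ and $\langle\nabla f(y_n),y_n-x^\ast\rangle$ into function-value gaps; and (iii) the exact expansion $\norm{z_{n+1}-x^\ast}^2=\norm{z_n-x^\ast}^2-2\eta_n\langle\nabla f(y_n),z_n-x^\ast\rangle+\eta_n^2\norm{\nabla f(y_n)}^2$, which is available because $\beta=1$ collapses the $z$-update to $z_{n+1}=z_n-\eta_n\nabla f(y_n)$. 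Combining these with the momentum identity $y_n=\alpha_n x_n+(1-\alpha_n)z_n$ should produce the one-step inequality $\mathcal{E}_{n+1}\le\mathcal{E}_n$. Telescoping then yields $A_n(f(x_n)-f^\ast)\le\mathcal{E}_0=\frac{L}{2}\norm{x_0-x^\ast}^2$ (using $z_0=x_0$ and $A_0=0$), and since $A_n$ is of order $n^2$ one gets $f(x_n)-f^\ast=O(\norm{x_0-x^\ast}^2/n^2)$; solving $f(x_n)-f^\ast\le\varepsilon$ for $n$ produces the stated $\nu_\varepsilon\le\sqrt{2L}\,\norm{x_0-x^\ast}/\sqrt{\varepsilon}$.

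The main obstacle is the one-step decrease: making $\mathcal{E}_{n+1}\le\mathcal{E}_n$ hold requires the coefficients to conspire so that the $\eta_n^2\norm{\nabla f(y_n)}^2$ term created by the squared-distance expansion is absorbed by the $-\frac{1}{2L}\norm{\nabla f(y_n)}^2$ term from smoothness, while the cross terms cancel against the convexity gaps. This forces a precise compatibility relation between $A_n$, $\alpha_n$, $\eta_n$ and $s$, and verifying it for the specific choice $\alpha_n=\frac{n-1}{n+1}$, $\eta_n=\frac{n+1}{2L}$ is where the bookkeeping is delicate; the correct normalization of $A_n$ (for instance $A_n\propto(n+1)^2$ or $n(n+1)$) is pinned down exactly by demanding that cancellation. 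For the strongly convex case the analogous potential would carry a geometric factor and one would prove $\mathcal{E}_{n+1}\le(1-\sqrt{\mu/L})\mathcal{E}_n$, but since its parameters already match Theorem~\ref{thm:cv_snag_exp} at $\rho_K=1$ I would simply cite that result.
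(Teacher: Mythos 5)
Your proposal is correct, but note that the paper never actually proves Theorem~\ref{th:cv_speed_nag}: it is stated as background and attributed to \citet{nesterov1983AMF,nesterovbook}, so any comparison is with the classical estimate-sequence proof rather than with an in-paper argument. Your route is the natural one given the paper's structure, and the strongly convex half is airtight: the exact gradient satisfies \ref{SGC} with $\rho_K=1$, the parameters and the bound of Theorem~\ref{thm:cv_snag_exp} at $\rho_K=1$ coincide term by term with the statement, and expectations are vacuous for deterministic dynamics — this is exactly the paper's own remark that "for the special case $\rho_K=1$" the SNAG bounds reduce to the deterministic ones (Appendix~\ref{appendix:deterministic_gd_nag}). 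For the convex half you correctly observed that $\alpha_n=\frac{n-1}{n+1}$ differs from the $\alpha_n$ of Theorem~\ref{thm:cv_snag_exp}, so reduction fails and a direct Lyapunov argument is needed; the only weakness of your proposal is that you left the decisive cancellation unverified, so let me confirm it closes. With $s=\frac1L$, $\beta=1$, $\eta_n=\frac{n+1}{2L}$, $\frac{\alpha_n}{1-\alpha_n}=\frac{n-1}{2}$, combining the descent inequality, the two convexity inequalities and the expansion of $\norm{z_{n+1}-x^\ast}^2$ (this is verbatim the computation of Appendix~\ref{Appendix convex expectation} with $\rho_K=1$) gives
\begin{equation*}
L\eta_n^2\bigl(f(x_{n+1})-f^\ast\bigr)+\tfrac12\norm{z_{n+1}-x^\ast}^2
\le \eta_n\tfrac{\alpha_n}{1-\alpha_n}\bigl(f(x_n)-f^\ast\bigr)+\tfrac12\norm{z_n-x^\ast}^2
+\Bigl(L\eta_n^2-\eta_n-\eta_n\tfrac{\alpha_n}{1-\alpha_n}\Bigr)\bigl(f(y_n)-f^\ast\bigr),
\end{equation*}
where the last coefficient equals $\frac{(n+1)^2-2(n+1)-(n^2-1)}{4L}=0$, and $\eta_n\frac{\alpha_n}{1-\alpha_n}=\frac{n^2-1}{4L}\le\frac{n^2}{4L}=L\eta_{n-1}^2$. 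Hence $\mathcal{E}_n:=\frac{n^2}{4L}(f(x_n)-f^\ast)+\frac12\norm{z_n-x^\ast}^2$ is non-increasing (at $n=0$ the coefficient $-\frac{1}{4L}$ is negative, which only helps), so $f(x_n)-f^\ast\le\frac{2L\norm{x_0-x^\ast}^2}{n^2}$ with $z_0=x_0$, yielding exactly $\nu_\varepsilon\le\sqrt{2L}\,\norm{x_0-x^\ast}/\sqrt{\varepsilon}$. So the mismatch in $\alpha_n$ does not break the telescoping; it merely turns the paper's exact coefficient matching into an inequality with slack $\frac{1}{4L}(f(x_n)-f^\ast)$ per step, and your sketch, once this arithmetic is filled in, is a complete and self-contained proof. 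One small caveat on your normalization: the natural growth relation here is $A_{n+1}-A_n=\frac{2n+1}{4}$ rather than $L\eta_n=\frac{n+1}{2}$, i.e. your ansatz $A_{n+1}-A_n=L\eta_n$ holds only up to the same $\frac14$ slack, which is precisely why the choice $A_n\propto n^2$ (and not $n(n+1)$ or $(n+1)^2$) is the one that works.
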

 By comparing the convergence speed for convex functions given by Theorems~\ref{th:cv_speed_gd} and \ref{th:cv_speed_nag}, NAG is faster than GD as long as the desire precision $\epsilon$ is small enough. As $L \geq \mu$, considering $\mu$-strongly convex functions NAG is always at least as good as GD. In practice, in particular for ill-conditioned problems, we can have $\mu \ll L$. In theses cases, NAG is a significant improvement over GD.

\paragraph{Optimality of NAG} Note that NAG (Algorithm \ref{alg:NAG}) not only outperforms GD (Algorithm~\ref{alg:GD}). It also offers bounds that are optimal among first order algorithms when minimizing strongly or non strongly convex functions, in the sense that it is possible to find examples of functions within these classes such that these bounds are achieved up to a constant \citep{nemirovskij1983problem}.

\subsection{The different forms of Nesterov algorithm}\label{app:different_form_nesterov}
 
There exists several ways of writing Nesterov momentum algorithm, that can be linked together (see \textit{e.g.} \citet{defazio2019curved} for 6 different forms in the strongly convex case). Different research fields are used to their own typical formulation. In this section, we present formulations that can be found in the optimization and machine learning communities. Within the optimization community NAG (Algorithm \ref{alg:NAG}) \citep{DBLP:conf/innovations/ZhuO17, hinder2023nearoptimal} or NAG 2S (Algorithm \ref{alg:NAG:2S}) \citep{suboydcandes, aujol:hal-03491527,hyppo} are often used. The machine learning community is rather used to the formulation of NAG ML (Algorithm \ref{alg:NAG:ML}). In fact, this last algorithm is the version implemented in Pytorch with the function \textit{torch.optim.SGD()} with the argument \textit{nesterov = True}, up to the condition that $\tau = 0$.

\begin{algorithm}
\caption{Nesterov Accelerated Gradient - Machine Learning version (NAG ML)}
\begin{algorithmic}[1]\label{alg:NAG:ML}
\STATE \textbf{input:} $x_0, b_0 \in \R^d$, $s > 0$, $p \in [0,1]$, $\tau \in \R$, $(\eta_n)_{n \in \mathbf{N}} \in \mathbf{R}_+^{\mathbf{N}}$
\FOR{$n = 0, 1, \dots, n_{it}-1$}
    \STATE $b_n \gets p b_{n-1} + (1-\tau)\nabla f(x_{n-1})$
    \STATE $x_{n} \gets x_{n-1} - s\left( \nabla f(x_{n-1}) + p b_n \right)$
\ENDFOR
\STATE \textbf{output:} $x_{n_{it}}$
\end{algorithmic}
\end{algorithm}

\begin{algorithm}
\caption{Nesterov Accelerated Gradient - Two Sequences version (NAG 2S)}
\begin{algorithmic}[1]\label{alg:NAG:2S}
\STATE \textbf{input:} $x_0 \in \R^d$, $s > 0$ , $(a_n)_{n \in \mathbf{N}}\in [0,1]^{\mathbf{N}}, (b_n)_{n \in \mathbf{N}} \in [0,1]^{\mathbf{N}}$
\FOR{$n = 0, 1, \dots, n_{it}-1$}
    \STATE $y_n \gets x_n +  a_n (x_n - x_{n-1}) + b_n (x_n - y_{n-1})$ 
    \STATE $x_{n+1} \gets y_n - s \nabla f(y_n)$
\ENDFOR
\STATE \textbf{output:} $x_{n_{it}}$
\end{algorithmic}
\end{algorithm}

The links between the several forms have been studied previously~\citep{defazio2019curved, lee2021geometric}. As our results are related to the three points version of NAG (Algorithm \ref{alg:NAG}), we state now results that allow to generate the same optimisation scheme than Algorithm \ref{alg:NAG} with Algorithms \ref{alg:NAG:ML}-\ref{alg:NAG:2S}.

\begin{proposition}[\cite{hermant2024study}]\label{prop:corres_nag_nag_2s}
 Consider $(x_n)_{n \in \N}$ and $(y_n)_{n \in \N}$ generated by NAG (Algorithm \ref{alg:NAG}). The same sequences are generated by Algorithm \ref{alg:NAG:2S} with choice of parameters
 \begin{equation}
     a_n =\frac{1 - \alpha_{n}}{1-\alpha_{n-1}}\alpha_{n-1} \beta_{n-1}, ~ b_n = (1-\alpha_{n})\left( \frac{\eta_{n-1}}{s} - \frac{\alpha_{n-1}\beta_{n-1}}{1-\alpha_{n-1}} -1 \right).
\end{equation}
\end{proposition}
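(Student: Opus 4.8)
The plan is to eliminate the auxiliary sequence $(z_n)$ from the three-term recursion of Algorithm~\ref{alg:NAG} and rewrite the update for $y_n$ purely in terms of $x_n$, $x_{n-1}$ and $y_{n-1}$, which is precisely the one-line form of Algorithm~\ref{alg:NAG:2S}. First I would invert the averaging step $y_n = \alpha_n x_n + (1-\alpha_n)z_n$ to obtain $z_n = \frac{y_n - \alpha_n x_n}{1-\alpha_n}$, valid whenever $\alpha_n \neq 1$ (which holds for the parameter choices at hand), and similarly $z_{n-1} = \frac{y_{n-1} - \alpha_{n-1} x_{n-1}}{1-\alpha_{n-1}}$. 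The key substitution uses the $x$-update one step earlier, $x_n = y_{n-1} - s\nabla f(y_{n-1})$, to replace the gradient by $\nabla f(y_{n-1}) = \frac{y_{n-1}-x_n}{s}$ inside the $z$-update $z_n = \beta_{n-1} z_{n-1} + (1-\beta_{n-1})y_{n-1} - \eta_{n-1}\nabla f(y_{n-1})$.

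Plugging the two expressions for $z_n$ and $z_{n-1}$ into this relation yields an identity involving only $x_n, x_{n-1}, y_{n-1}, y_n$. Solving for $y_n$ (by multiplying through by $1-\alpha_n$) presents it as an affine combination of $x_n$, $x_{n-1}$ and $y_{n-1}$. It then remains to collect coefficients and match them against the expansion $y_n = (1+a_n+b_n)x_n - a_n x_{n-1} - b_n y_{n-1}$ of the NAG~2S step. The coefficient of $x_{n-1}$ reads off $a_n = \frac{1-\alpha_n}{1-\alpha_{n-1}}\alpha_{n-1}\beta_{n-1}$ directly, and the coefficient of $y_{n-1}$ gives $b_n$ after the one-line simplification $-\frac{\beta_{n-1}}{1-\alpha_{n-1}} + \beta_{n-1} = -\frac{\alpha_{n-1}\beta_{n-1}}{1-\alpha_{n-1}}$, recovering the stated formula.

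As a consistency check I would confirm that the coefficient of $x_n$ equals $1+a_n+b_n$; this follows since $a_n + b_n = (1-\alpha_n)\left(\frac{\eta_{n-1}}{s}-1\right)$, so the three coefficients are mutually compatible and no separate verification of the $x$-update is needed (both algorithms share the identical step $x_{n+1} = y_n - s\nabla f(y_n)$). The argument is purely algebraic, so I expect no conceptual obstacle; the only real care required is the bookkeeping of the $\frac{1}{1-\alpha}$ factors and the treatment of the first iterate. For the base case, the initialization $x_0 = z_0$ gives $y_0 = x_0$, which fixes the starting values of the two-sequence recursion and lets the generic induction step above carry the correspondence forward for all $n\geq 1$.
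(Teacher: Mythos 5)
Your proof is correct: eliminating $z_n$ via $z_n = \frac{y_n - \alpha_n x_n}{1-\alpha_n}$ and $\nabla f(y_{n-1}) = \frac{y_{n-1}-x_n}{s}$, then matching coefficients against $y_n = (1+a_n+b_n)x_n - a_n x_{n-1} - b_n y_{n-1}$, does yield exactly the stated $a_n$ and $b_n$, and your consistency check $1+a_n+b_n = \alpha_n + (1-\alpha_n)\frac{\eta_{n-1}}{s}$ (equivalently, that the three derived coefficients sum to $1$) confirms the affine form is attainable, with the initialization $x_0=z_0$ anchoring the induction. Note that the paper itself states Proposition~\ref{prop:corres_nag_nag_2s} without proof, importing it from the cited reference; your elimination-and-coefficient-matching argument is precisely the strategy the paper deploys for the companion result, Proposition~\ref{prop:linknag:ml-3p}, so there is nothing to flag.
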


Note that Algorithm~\ref{alg:NAG:ML} have constant parameters in the Pytorch implementation. 
The link between Algorithm \ref{alg:NAG} and Algorithm \ref{alg:NAG:ML} can be deduced from \citet{defazio2019curved} in the case when $\tau = 1$ for Algorithm \ref{alg:NAG:ML}. We demonstrate a generalisation of this result for $\tau \neq 1$ and we  present it in Proposition~\ref{prop:linknag:ml-3p}

\begin{proposition}\label{prop:linknag:ml-3p}
Consider $(y_n)_{n \in \N}$ generated by Algorithm~\ref{alg:NAG}. The sequence $(x_n)_{n \in \N}$ generated by Algorithm \ref{alg:NAG:ML}  is the same as $(y_n)_{n \in \N}$ with choices of parameters
\begin{equation}
    p = \alpha\beta,~ \tau  = \frac{s(1+ \alpha (\beta -1)) - (1-\alpha)\eta}{\alpha \beta s}.
\end{equation}
\end{proposition}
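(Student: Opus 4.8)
The plan is to eliminate the auxiliary sequences from each algorithm, reducing both schemes to a single second-order recurrence of the shape $u_{n+1} = c_1 u_n + c_2 u_{n-1} + c_3 \nabla f(u_n) + c_4 \nabla f(u_{n-1})$, and then to identify the four coefficients. Since Algorithm~\ref{alg:NAG:ML} uses constant parameters, I would work throughout with constant $\alpha_n = \alpha$, $\eta_n = \eta$ and constant $\beta$, and assume $\alpha \neq 1$ and $\alpha\beta \neq 0$ so that the eliminations are well defined; this is the generic regime (e.g. the strongly convex choice of Theorem~\ref{thm:cv_snag_exp} has $\alpha,\beta \in (0,1)$).

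First I would reduce Algorithm~\ref{alg:NAG}. Writing $g_n = \nabla f(y_n)$, the update $x_{n+1} = y_n - s g_n$ gives $x_n = y_{n-1} - s g_{n-1}$; solving $y_n = \alpha x_n + (1-\alpha) z_n$ for $z_n$ (here $\alpha \neq 1$ is used) and inserting the result into the $z$-update, then into $y_{n+1} = \alpha x_{n+1} + (1-\alpha) z_{n+1}$, yields after collecting terms
\begin{equation}\label{eq:plan_nag_rec}
y_{n+1} = (1+\alpha\beta)\, y_n - \alpha\beta\, y_{n-1} - \bigl(\alpha s + (1-\alpha)\eta\bigr) g_n + \alpha\beta s\, g_{n-1}.
\end{equation}

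Next I would treat Algorithm~\ref{alg:NAG:ML}. Writing $h_n = \nabla f(x_n)$, the update gives $s p\, b_n = -(x_n - x_{n-1}) - s h_{n-1}$; combining the instances of this identity at indices $n$ and $n+1$ with $b_{n+1} = p b_n + (1-\tau) h_n$ eliminates the buffer $b$ and produces
\begin{equation}\label{eq:plan_nagml_rec}
x_{n+1} = (1+p)\, x_n - p\, x_{n-1} - s\bigl(1 + p(1-\tau)\bigr) h_n + p s\, h_{n-1}.
\end{equation}
Identifying \eqref{eq:plan_nag_rec} with \eqref{eq:plan_nagml_rec} under the correspondence $x_n \leftrightarrow y_n$, $h_n \leftrightarrow g_n$, the momentum term forces $p = \alpha\beta$, which makes the two $g_{n-1}$-coefficients agree automatically ($ps = \alpha\beta s$); the one remaining equation $s\bigl(1 + p(1-\tau)\bigr) = \alpha s + (1-\alpha)\eta$ then solves to $\tau = \frac{s(1 + \alpha(\beta-1)) - (1-\alpha)\eta}{\alpha\beta s}$, exactly the claimed value.

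Finally, matching recurrences only proves equality up to initialization, so I would verify the base case. With $x_0 = z_0$ in Algorithm~\ref{alg:NAG} one has $y_0 = x_0$, and a direct computation gives $y_1 = x_0 - \bigl(\alpha s + (1-\alpha)\eta\bigr)\nabla f(x_0)$; choosing the buffer initialization $b_0 = 0$ in Algorithm~\ref{alg:NAG:ML} yields $x_1 = x_0 - s\bigl(1 + p(1-\tau)\bigr)\nabla f(x_0)$, which equals $y_1$ by the coefficient identity just established. A straightforward induction on the common recurrence then gives $x_n = y_n$ for all $n$. The coefficient matching itself is routine algebra; the step requiring the most care is the index bookkeeping in \eqref{eq:plan_nagml_rec} together with the alignment of the two initializations ($x_0 = z_0$ and $b_0 = 0$), which is where the argument could most easily go wrong.
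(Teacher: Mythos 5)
Your proposal is correct and takes essentially the same route as the paper's proof: the paper likewise eliminates the auxiliary sequences ($z_n$ via $z_n = \frac{y_n - \alpha y_{n-1} + \alpha s \nabla f(y_{n-1})}{1-\alpha}$, and the buffer $b_n$ from the update of $x_n$), reduces both algorithms to one-point second-order recurrences identical to yours, and matches coefficients to obtain $p = \alpha\beta$ and $\tau = \frac{s(1+\alpha(\beta-1)) - (1-\alpha)\eta}{\alpha\beta s}$. The one thing you add is the explicit check of initializations ($x_0 = z_0$, $b_0 = 0$, giving $x_1 = y_1$) and the closing induction, which the paper leaves implicit; this is a genuine, if minor, gain in rigor rather than a different method.
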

\begin{proof}
 Our strategy is to write both algorithms in a one point form algorithm to compare the parameters.
 
 \emph{ (i) Algorithm \ref{alg:NAG:ML}.}
 By the line 4 of Algorithm \ref{alg:NAG:ML}, we have the relation
 \begin{equation}
     b_n = \frac{x_{n-1} - x_n}{ps} - \frac{1}{p}\nabla f(x_{n-1}).
 \end{equation}
 Now we can replace the $b_n$ and $b_{n-1}$ in line $3$ of Algorithm \ref{alg:NAG:ML}, thereby
 \begin{align}
     \frac{x_{n-1} - x_n}{ps} - \frac{1}{p}\nabla f(x_{n-1}) = p\left( \frac{x_{n-2} - x_{n-1}}{ps} - \frac{1}{p}\nabla f(x_{n-2}) \right) + (1-\tau)\nabla f(x_{n-1}).
 \end{align}
 We multiply both sides by $-ps$, and rearrange
 \begin{align}
x_n &= x_{n-1}+ p(x_{n-1} - x_{n-2}) - s\nabla f(x_{n-1})  + ps\nabla f(x_{n-2})  -ps(1-\tau)\nabla f(x_{n-1})\\
 &= x_{n-1}+ p(x_{n-1} - x_{n-2}) - s\nabla f(x_{n-1})  - ps\left( \nabla f(x_{n-1}) -\nabla  f(x_{n-2}) \right) + \tau ps \nabla f(x_{n-1})\label{eq:one_point_formulation_alg_ml}
 \end{align}
 Interestingly, we recognize with the three first terms on the right hand side Polyak's Heavy Ball (\ref{eq:hb}) equation \citep{POLYAK19641}
 \begin{equation}\label{eq:hb}\tag{HB}
     x_n = x_{n-1}+ p(x_{n-1} - x_{n-2}) - s\nabla f(x_{n-1}).
 \end{equation} The $- ps\left( \nabla f(x_{n-1}) -\nabla f(x_{n-2}) \right)$ term is often referred as a gradient correction term \citep{shi2022understanding}, and is characteristic of the difference between Polyak's Heavy Ball and Nesterov's algorithm. The last $+\tau ps \nabla f(x_{n-1})$ is considered as a damping term, and is often needed to obtain the tightest convergence results \citep{kim2016optimized}, also to achieve accelerated convergence outside of the deterministic convex realm \citep{hermant2024study}.
 
 \emph{(ii) Algorithm \ref{alg:NAG}}
 We set that $\forall n\in \N$, $\alpha_n = \alpha \in [0,1]$, and $\eta_n = \eta >0$. First, combining lines 3 and 4 of Algorithm~\ref{alg:NAG}, we have:
 \begin{align}
     y_n = \alpha x_n + (1-\alpha)z_n = \alpha(y_{n-1} - s \nabla f(y_{n-1})) + (1-\alpha)z_n
\end{align}
Hence:
\begin{align}
    z_n = \frac{y_n - \alpha y_{n-1} + \alpha s \nabla f(y_{n-1})}{1-\alpha}.
\end{align}

Now we inject this expression in line 5 of the Algorithm \ref{alg:NAG}
\begin{align}
      \frac{y_{n+1} - \alpha y_{n} + \alpha s \nabla f(y_{n})}{1-\alpha} = \beta \left(\frac{y_n - \alpha y_{n-1} + \alpha s \nabla f(y_{n-1})}{1-\alpha} \right) + (1-\beta)y_n - \eta \nabla f(y_{n}).
\end{align}
Then, multiply both sides by $1-\alpha$ and rearrange to get
\begin{align}
    y_{n+1} &= \alpha y_n - \alpha s \nabla f(y_{n}) + \beta y_n - \beta \alpha y_{n-1} + \alpha \beta s \nabla f(y_{n-1}) \\
    +& (1-\alpha)(1-\beta)y_n - (1-\alpha)\eta \nabla f(y_{n}).
\end{align}
By simplifying and regrouping terms, we get
\begin{align}
    y_{n+1} &= y_n + \alpha \beta (y_n - y_{n-1}) - s\nabla f(y_{n}) -\alpha \beta s \left( \nabla f(y_{n}) - \nabla f(y_{n-1}) \right)\nonumber\\
    &+ s\nabla f(y_{n}) +\alpha \beta s\nabla f(y_{n}) - \alpha s \nabla f(y_{n}) - (1-\alpha) \eta \nabla f(y_{n})\\
    &=y_n + \alpha \beta (y_n - y_{n-1}) - s\nabla f(y_{n}) -\alpha \beta s \left( \nabla f(y_{n}) - \nabla f(y_{n-1}) \right)\nonumber\\
 &+\left[ s(1+ \alpha (\beta -1)) - (1-\alpha)\eta \right]\nabla f(y_{n}).\label{eq:one_point_formulation_alg_nag}
\end{align}
\\
By comparing, Equation~\ref{eq:one_point_formulation_alg_ml} and Equation~\ref{eq:one_point_formulation_alg_nag}, we can identify the different parameters
\begin{align}
    p &= \alpha\beta \\
    \tau p s &= s(1+ \alpha (\beta -1)) - (1-\alpha)\eta
\end{align}
So, we have the correspondence

\begin{align}
    p &= \alpha\beta \\
    \tau  &= \frac{s(1+ \alpha (\beta -1)) - (1-\alpha)\eta}{\alpha \beta s} 
\end{align}

\end{proof}
\paragraph{Extension to Stochastic Nesterov algorithms}
Proposition~\ref{prop:corres_nag_nag_2s} and Proposition~\ref{prop:linknag:ml-3p} can be extended to the stochastic versions of NAG (Algorithm~\ref{alg:NAG}), NAG ML (Algorithm~\ref{alg:NAG:ML}) and NAG 2S (Algorithm~\ref{alg:NAG:2S}), where the $\nabla f$ terms are replaced by $\tilde \nabla_K$. The same correspondence between parameters of these algorithms holds.


\section{Related works}\label{app:related_results}
\subsection{Gradient correlation conditions}\label{app:grad_cond_related_work}
In this paper, we introduce two assumptions related to the average correlations of the gradients of the functions that form the sum in problem (\ref{problem finite sum}), namely
\begin{equation}\tag{PosCorr}
   \forall x \in \mathbb{R}^d,~ \sum_{1 \leq i< j \leq N} \langle \nabla f_i(x),\nabla f_j(x) \rangle \geq 0.
\end{equation}
\begin{equation}\tag{RACOGA}
    \forall x \in \mathbb{R}^d \backslash \overline{\mathcal{X}}, ~\frac{\sum_{1 \leq i< j \leq N} \langle \nabla f_i(x),\nabla f_j(x) \rangle }{\sum_{i = 1}^N \lVert \nabla f_i(x) \rVert^2 } \geq -c,
\end{equation}
where $\overline{\mathcal{X}} = \{ x\in \R^d, \forall i \in \{1,\dots,N\},\norm{\nabla f_i(x)}=0\}$.
\ref{ass:poscor} is a special case of \ref{ass:racoga}, with the choice $c = 0$. The key role of correlation between gradients has been already observed in previous works, through related but different assumptions.
\paragraph{Gradient diversity \citep{yin2018gradient}}
Gradient diversity is defined at a point $x \in \R^d$ as the following ratio
\begin{equation}\label{ass:gradiv}\tag{GradDiv}
    \Delta (x) = \frac{\sum_{i=1}^N \norm{\nabla f_i(x)}^2}{\norm{\sum_{i = 1}^N \nabla f_i(x)}^2}.
\end{equation}
This quantity is closely related to the \ref{SGC} condition. Indeed, for batch size $1$, we clearly have
\begin{equation}
    \frac{1}{N}\sum_{i=1}^N \norm{\nabla f_i(x)}^2 \leq N \sup_{x \in \R^d} \Delta (x) \norm{\nabla f(x)}^2.
\end{equation} Thus, assuming $\sup_{x \in \R^d} \Delta (x) < +\infty$, $f$ verifies \ref{SGC} with $\rho_1 \leq N \sup_{x \in \R^d} \Delta (x)$. The authors show that increasing batch size is less efficient with a ratio depending on the values that $N \Delta(\cdot)$ takes along the optimization path. The smaller these quantities (high correlation), the smaller this ratio. Conversely, the higher it is (low or anti correlation), the higher the ratio is, inducing a large gain with parallelization of large batches. \ref{ass:gradiv} is another measure to quantify gradient correlation. Indeed by developing the squared norm, we have the relation
\begin{equation}
    \Delta(x)^{-1} = 1 + 2\frac{\sum_{1 \leq i< j \leq N} \langle \nabla f_i(x),\nabla f_j(x) \rangle }{\sum_{i = 1}^N \lVert \nabla f_i(x) \rVert^2 }
\end{equation}
However, \ref{ass:racoga} appears naturally in our study of the \ref{SGC}, and is a direct measure of the correlation between gradients.

\paragraph{Gradient confusion \citep{sankararaman2020impact}} $f$ has gradient confusion $\eta \geq 0$ at a point $x \in \R^d$ if 
\begin{equation}\label{ass:grad_confusion}\tag{GradConf}
    \dotprod{\nabla f_i(x),\nabla f_j(x)} \geq -\eta, ~i\neq j \in \{1,\dots,j\}.
\end{equation}
 The authors show that for some classes of functions and considering SGD, satisfying this assumption allows to reduce the size of the neighbourhood of a stationary point towards which the algorithm converges. Also, they study theoretically and empirically how the gradient confusion behaves when considering neural networks. They show that practices that improve the learning, such as increasing width, batch normalization and skip connections, induce a lower gradient confusion at the end of the training, \textit{i.e.} \ref{ass:grad_confusion} is verified with a small $\eta$. \textbf{In other words, well tuned neural networks avoid anti-correlation between gradients.} However, compared to \ref{ass:racoga}, \ref{ass:grad_confusion} asks for a uniform bound over the correlation of each pair of gradients, which can be much more restrictive.

\subsection{Linear regression acceleration results}
We mentioned in the introduction of this paper that there exists positive results concerning the possibility of acceleration considering the linear regression problem \citep{jain2018accelerating,liu2018accelerating, varre2022accelerated}. However, our setting is a bit different. Indeed, we consider the finite sum setting, while they consider the following problem
\begin{equation}
    \min_{x\in \R^d} P(x),~ P(x) := \mathbb{E}_{(a,b)\sim \mathcal{D}}\left[\frac{1}{2}(\dotprod{a,x}-b)^2 \right].
\end{equation}
They assume that they have access to a stochastic first order oracle, that returns the quantity
\begin{equation}
    \tilde \nabla P(x) = (\dotprod{a,x}-b)a
\end{equation}
where $(a,b)\sim \mathcal{D}$. Noting $H = \mathbb{E}_{(a,b)\sim \mathcal{D}}[a a^T]$, they denote $R^2$ and $\tilde \kappa$ the smallest positive constants such that
\begin{equation}
    \mathbb{E}_{(a,b)\sim \mathcal{D}}[\norm{a}^2a a^T] \preceq R^2H, ~ \mathbb{E}_{(a,b)\sim \mathcal{D}}[\norm{a}_{H^{-1}}^2a a^T] \preceq \tilde \kappa H.
\end{equation} 

Finally, they note $\mu$ the smallest eigenvalue of $H$ which is assumed to be invertible, and $\kappa := \frac{R^2}{\mu}$. In this setting, there is interpolation if there exists $x^\ast \in \R^d$ such that almost surely, $b = \dotprod{x^\ast,a}$. Under Assumption~\ref{ass:interpolation}, noting $n_{it}$ the algorithm iterations, the convergence speed of GD (Algorithm~\ref{alg:GD}) is of the order $\bigO(e^{-\frac{n_{it}}{\kappa}})$, while momentum style algorithms allow to have a $\bigO(e^{-\frac{n_{it}}{\sqrt{\kappa \tilde \kappa}}})$ convergence \citep{jain2018accelerating,liu2018accelerating}. The acceleration is data dependant. In particular, fixing the distribution to be uniform over the orthonormal basis $\{e_1,\dots,e_d\}$, one have $\kappa = \tilde \kappa$ and there is no acceleration. Our results (Example~\ref{ex:orthogonal} + Theorem~\ref{th:saturation}) extend outside linear regression this non-acceleration result, to the case of convex functions with orthogonal gradients. However, the ideas behind those results and ours are different, and $\tilde \kappa$ is not a correlation measure. According to \citet{jain2018accelerating}, $\tilde \kappa$ measure the number of $a_i$ we need to sample such that the empirical covariance is close to the Hessian matrix, $H$.

\subsection{SGC related convergence results}\label{app:sgc_related_works}
The linear convergence of SGD under a variation of SGC, for smooth convex and strongly convex functions, has been addressed in \citep{schmidt2013fast}. Later, convergence of SNAG type algorithms under
strong growth condition, for functions that are convex or strongly convex, also attracted interest. In this section, we discuss how these works relate to Theorems~\ref{thm:cv_snag_exp}-\ref{thm:almost_sure_cvg_cvx}.
\paragraph{\cite{vaswani2019fast}}
As mentioned in Section~\ref{sec:convergence}, the bounds of Theorem~\ref{thm:cv_snag_exp} were already achieved by \citep{vaswani2019fast}. It was achieved with a quite unusual formulation of the SNAG algorithm, which is equivalent, to the following one
\begin{algorithm}[H]
\caption{ \citep{vaswani2019fast})}
\begin{algorithmic}[1]
\STATE \textbf{input:} $x_0 = z_0 \in \R^d$, $s > 0$, $\beta \in [0,1]$, $(\alpha_n)_{n \in \mathbf{N}} \in [0,1]^{\mathbf{N}}$, $(\eta_n)_{n \in \mathbf{N}} \in \mathbf{R}_+^{\mathbf{N}}$, $g_n(.)$ stochastic approximation of $\nabla f(.)$ at iteration $n$.
\FOR{$n = 0, 1, \dots, n_{it}-1$}
    \STATE $y_n = \alpha_n x_n +(1-\alpha_n)z_n$ 
    \STATE $x_{n+1} = y_n - s g_n(y_n)$
    \STATE $z_{n+1} = \beta z_n + (1-\beta)y_n - \gamma_n s  g_n(y_n)$
\ENDFOR
\STATE \textbf{output:} $x_{n_{it}}$
\end{algorithmic}
\end{algorithm}
For $f$ $L$-smooth, and such that \ref{SGC} is verified with constant $\rho$, the bounds of Theorem~\ref{thm:cv_snag_exp} are verified considering SNAG  \citep{vaswani2019fast} with
the following parameters:

$\bullet$ If $f$ $\mu$-strongly convex:
\begin{align*}
   & \gamma_n = \frac{1}{\sqrt{\mu s \rho}},\quad \beta_n = 1 - \sqrt{\frac{\mu s}{\rho}},\quad b_{n+1} = \frac{\sqrt{\mu}}{\left( 1-\sqrt{\frac{\mu s}{\rho}} \right)^{(n+1)/2}},\\
   &a_{n+1} =  \frac{1}{\left( 1-\sqrt{\frac{\mu s}{\rho}} \right)^{(n+1)/2}}, \quad \alpha_n = \frac{\gamma_n \beta_n b_{n+1}^2 s}{\gamma_n \beta_n b_{n+1}^n s + a_n^2},\quad s = \frac{1}{\rho L}.
\end{align*}
$\bullet$ If $f$ convex:
\begin{align*}
   & \gamma_n = \frac{\frac{1}{\rho} + \sqrt{\frac{1}{\rho^2} + 4\gamma_{n-1}^2}}{2}, \quad a_{n+1} = \gamma_n \sqrt{s \rho},\\
    &\alpha_n = \frac{\gamma_n s}{\gamma_n s + a_n^2},\quad s = \frac{1}{\rho L}.
\end{align*}
In the strongly convex case, intermediate sequences of parameters $(a_n)$ and $(b_n)$ appear. In the convex case, the sequence $(\gamma_n)$ is defined with a recursive formula. Our different proof of Theorem~\ref{thm:cv_snag_exp} does not make these features appear.
\paragraph{\cite{gupta2023achieving}}
Another line of research leads to a similar expectation result to ours, using AGNES (Algorithm~\ref{alg:agnes}), and so with a different proof. The authors of \cite{gupta2023achieving}
also get almost sure convergence, nevertheless without convergence rates contrarily to our Theorem~\ref{thm:almost_sure_cvg_cvx}. 
\begin{algorithm}[H]
\caption{Accelerated Gradient descent with Noisy EStimators (AGNES), \citet{gupta2023achieving}}
\begin{algorithmic}[1]\label{alg:agnes}
\STATE \textbf{input:}$f$ (objective{/loss} function), $x_0$ (initial point), $\alpha=10^{-3}$ (learning rate), $\eta=10^{-2}$ (correction step size), $\rho=0.99$ ({momentum}), $N$ (number of iterations)
\STATE $v_0 \gets 0$
\FOR{$n = 0, 1, \dots, N$}
    \STATE 	$g_{n} \gets \nabla_xf(x_{n})$ \quad (gradient estimator)
    \STATE $v_{n+1} \gets \rho(v_{n} - g_{n})$ 
    \STATE 	$x_{n+1} \gets x_{n} +\alpha v_{n+1} - \eta g_{n}$
\ENDFOR
\STATE $g_{N} \gets \nabla_x f(x_N)$
\STATE $x_N \gets x_N - \eta g_N$
\STATE \textbf{output:} $x_{N}$
\end{algorithmic}
\end{algorithm}

\section{Some examples with critical strong growth condition constant}\label{app:example}
In this section, we show that even for some simple examples, the \ref{SGC} constant can be very large or not existing, justifying that finding interesting characterizations of it is a challenging problem.
\subsection{Large \texorpdfstring{$\rho_1$}{rho} with linear regression}\label{Example big rho 1}
We consider the function
$$ f(x)= \frac{1}{2}(\frac{1}{2}\underbrace{\langle e_1,x \rangle^2}_{:=f_1(x)} + \frac{1}{2}\underbrace{\langle a,x \rangle^2}_{:=f_2(x)}),$$
where $e_1 = (1,0)$, $a = (1,\varepsilon)$. We have $\nabla f_1(x)=\langle e_1,x \rangle e_1$ and $ \nabla f_2(x) =  \langle a,x \rangle a$. Assume $x_0 = (-\frac{\varepsilon}{2},\lambda)$. We have
\begin{align}
     \nabla f(x_0)  &= -\frac{\varepsilon}{4}(1,0) + \frac{\varepsilon}{2}(\lambda - \frac{1}{2})(1,\varepsilon)\\
     &= \frac{\varepsilon}{2}(\lambda -1,(\lambda -\frac{1}{2})\varepsilon).
\end{align}
Thus we obtain
\begin{equation}
    \lVert \nabla f(x_0) \rVert^2 = \frac{\varepsilon^2}{4}( (\lambda-1)^2 + (\lambda -\frac{1}{2})^2\varepsilon^2),
\end{equation}
whereas
\begin{align}
    \mathbb{E}[\norm{\tilde \nabla_1(x_0)}^2] = \frac{\varepsilon^2}{8} + \frac{\varepsilon^2}{2}(\lambda - \frac{1}{2})^2(1+\varepsilon^2).
\end{align}
Simply note that with the choice $\lambda = 1$, we have $    \lVert \nabla f(x_0) \rVert^2 = \frac{\varepsilon^4}{16}$, while $ \mathbb{E}[\norm{\tilde \nabla_1(x_0)}^2] = \varepsilon^2\frac{1}{4} + o(\varepsilon^2) $. Thus
\begin{equation}
    \frac{ \mathbb{E}[\norm{\tilde \nabla_1(x_0)}^2]}{ \lVert \nabla f(x_0) \rVert^2 } \approx \frac{1}{\varepsilon^2}.
\end{equation}
So the strong growth condition can be arbitrarily large as $\varepsilon$ vanishes.
\subsection{Non convex functions such that \texorpdfstring{$\rho_1$}{rho} does not exist}\label{Example big rho 2}
 We consider:
$$ f(x)= \frac{1}{2}(f_1(x) + f_2(x))$$
where $x = (x_1,x_2) \in \mathbb{R}^2$, $f_1(x) = \frac{1}{2}(x_1 - \tanh(x_2))^2$, $f_2(x) = \frac{1}{2}(x_1 + \tanh(x_2))^2$. We have:
\begin{align}
    \nabla f_1(x) &= (x_1 - \tanh(x_2),-(x_1 - \tanh(x_2))(1-\tanh^2(x_2)))\\
    \nabla f_2(x) &= (x_1 + \tanh(x_2),(x_1 + \tanh(x_2))(1-\tanh^2(x_2)))
\end{align}
Consider the line $y =(0,y_0),~ y_0 \in \mathbb{R}$. We have:
\begin{align}
    \nabla f_1(y) &= (- \tanh(y_0), \tanh(y_0)(1-\tanh^2(y_0)))\\
    \nabla f_2(y) &= ( \tanh(y_0), \tanh(y_0)(1-\tanh^2(y_0))).
\end{align}
Then:
\begin{equation}
    \nabla f(y) =  \frac{1}{2}(\nabla f_1(y) + \nabla f_2(y)) = (0,\tanh(y_0)(1-\tanh(y_0)^2) \underset{\vert y_0 \vert \to +\infty}{\to} (0,0).
\end{equation}
However, we have $ \nabla f_1(y) \to (-1,0)$ and $ \nabla f_2(y) \to (1,0)$ as $y_0 \to +\infty$. Thus, for any $\rho_1>0$, the \ref{SGC} condition is not verified when considering the whole space $\mathbb{R}^2$.

\section{Convergence of SGD}\label{app:convergence_sgd_section}
In this section, we prove convergence results of SGD (Algorithm~\ref{alg:SGD}) stated in Section~\ref{sec:convergence}, namely Theorem \ref{thm:sgd} and Theorem \ref{thm:sgd_almost}. Also, in Subsection \ref{app:convergence_SGD_sgc} we present a convergence result for SGD under \ref{SGC} and we justify that it is not a relevant result in order to compare the convergence speed of SGD and SNAG (Algorithm~\ref{alg:SNAG}). 
\subsection{Proofs of Theorem \ref{thm:sgd} and Theorem \ref{thm:sgd_almost}}\label{app:convergence_sgd}
We can deduce Theorem~\ref{thm:sgd}-Theorem~\ref{thm:sgd_almost} by respectively adapting the proof from \citet{garrigos2023handbook} and \citet{sebbouh2021almost}. We can not directly apply their original results because their setting is slightly different: in problem (\ref{problem finite sum}) their convexity assumption holds for each $f_i$, while in our case it solely holds over the whole sum. In both expectation and almost sure cases, 
the core Lemma is to bound $\mathbb{E}\left[ \norm{\gradsto{x}}^2 \right]$, which bounds the variance of the estimator.
It is Lemma 6.7 in \citet{garrigos2023handbook}, and Lemma 1.3 in \citet{sebbouh2021almost}. These two lemmas do not hold in our setting. We will use instead the following result.
\begin{lemma}\label{lem:variance_transfer_sgd}
Under assumptions (\ref{ass:interpolation})-(\ref{ass:l_smooth}), we have
\begin{equation}
    \mathbb{E}\left[ \norm{\gradsto{x}}^2 \right] \leq 2 L_{(K)}(f(x)-f^\ast).
\end{equation}
\end{lemma}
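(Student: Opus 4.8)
The plan is to exploit the finite-sum / batch structure by applying the descent lemma to the \emph{subsampled} objective rather than to the individual components. Fix a batch $B \subset \{1,\dots,N\}$ with $\text{Card}(B) = K$ and set $f_B := \frac{1}{K}\sum_{i\in B} f_i$, so that by definition of the estimator in (\ref{eq:gradient_estimator}) we have $\gradsto{x} = \nabla f_B(x)$. By Assumption~\ref{ass:l_smooth}, $f_B$ is $L_B$-smooth with $L_B := \frac{1}{K}\sum_{i\in B} L_i$, and crucially $L_B \le L_{(K)}$ by the very definition of $L_{(K)} = \max_B \frac{1}{K}\sum_{i\in B}L_i$.

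First I would record the standard consequence of $L$-smoothness: for any $L$-smooth function $g$ possessing a global minimizer, minimizing the quadratic upper bound following Definition~\ref{def:lsmooth}, namely $g(y) \le g(x) + \langle \nabla g(x), y-x\rangle + \frac{L}{2}\norm{y-x}^2$, over $y$ (the minimum is attained at $y = x - \frac{1}{L}\nabla g(x)$) yields
\begin{equation}
    \min g \le g(x) - \frac{1}{2L}\norm{\nabla g(x)}^2, \qquad \text{i.e.} \qquad \norm{\nabla g(x)}^2 \le 2L\left(g(x) - \min g\right).
\end{equation}
Note this inequality uses only smoothness and the existence of a global minimizer, \emph{not} convexity, which is exactly what is needed in our setting where the $f_i$ are not assumed convex. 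Under Assumption~\ref{ass:interpolation}, the common point $x^\ast$ minimizes every $f_i$, hence minimizes $f_B$ (a nonnegative combination of functions all minimized at $x^\ast$), so $\min f_B = f_B(x^\ast)$. Applying the bound above to $g = f_B$ and using $L_B \le L_{(K)}$ gives, for each fixed batch $B$,
\begin{equation}
    \norm{\gradsto{x}}^2 = \norm{\nabla f_B(x)}^2 \le 2 L_B\left(f_B(x) - f_B(x^\ast)\right) \le 2 L_{(K)}\left(f_B(x) - f_B(x^\ast)\right).
\end{equation}

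Finally I would take the expectation over the uniformly sampled batch $B$. Since each index $i$ belongs to $B$ with probability $K/N$, linearity of expectation gives $\mathbb{E}[f_B(x)] = \frac{1}{N}\sum_{i=1}^N f_i(x) = f(x)$ and likewise $\mathbb{E}[f_B(x^\ast)] = f(x^\ast) = f^\ast$, whence $\mathbb{E}[\norm{\gradsto{x}}^2] \le 2 L_{(K)}(f(x) - f^\ast)$, as claimed. The main subtlety—and the step I expect to be the crux—is the decision to apply the descent lemma to the batch function $f_B$ rather than to each $\nabla f_i$ separately: a naive per-component argument $\norm{\nabla f_i(x)}^2 \le 2L_i(f_i(x)-f_i^\ast)$ followed by averaging only produces the factor $L_{\max}$ (since one controls $L_i \le L_{\max}$, not $L_i \le L_{(K)}$), which is strictly weaker than the claimed $L_{(K)}$. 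Treating the whole batch at once, together with the observation that only a \emph{common minimizer} (not convexity of each $f_i$) is required, is what delivers exactly the constant $L_{(K)}$.
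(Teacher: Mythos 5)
Your proof is correct and follows essentially the same route as the paper: you apply the descent-lemma consequence $\norm{\nabla f_B(x)}^2 \le 2L_B(f_B(x)-\min f_B)$ to the batch function $f_B$ (whose smoothness constant is bounded by $L_{(K)}$ by definition), use interpolation so that $x^\ast$ is a common minimizer and hence $\min f_B = f_B(x^\ast)$, and conclude by taking expectation over the uniformly sampled batch. Your closing remark — that working with the whole batch rather than component-wise is what yields $L_{(K)}$ instead of $L_{\max}$, and that only a common minimizer (not convexity of each $f_i$) is needed — accurately identifies the same key points the paper relies on.
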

\begin{proof}
Let $x \in \mathbb{R}^d$.
Consider a fixed batch $B$ with $\vert B \vert = K$. We note $f_B(x) = \frac{1}{K} \sum_{i \in B}f_i(x)$, $\nabla f_B(x) :=\frac{1}{K} \sum_{i \in B}\nabla f_i(x)$. We first show that $f_B$ is $\frac{\sum_{i \in B} L_i}{K}$-smooth.
\begin{align}
    \forall x,y \in \R^d,~ \norm{\nabla f_B(x)- \nabla f_B(y)} &\leq \frac{1}{K}\sum_{i \in B} \norm{\nabla f_i(x) - \nabla f_i(y)} \\ &\leq \frac{1}{K}\sum_{i \in B}L_i\norm{x-y} \\
     &= \frac{\sum_{i \in B} L_i}{K}\norm{x-y}.
\end{align}
The first inequality uses triangular inequality, the second inequality uses the assumption that each $f_i$ is $L_i$-smooth, \textit{i.e.} $\norm{\nabla f_i(x) - \nabla f_i(y)} \leq L_i\norm{x-y}$. Now, it is well known that if $f_B$ is $\frac{\sum_{i \in B} L_i}{K}$-smooth, fixing $x^\ast \in \arg \min f$ we have $\norm{\nabla f_B(x)}^2 \leq 2\frac{\sum_{i \in B} L_i}{K}(f_B(x)-f_B(x^\ast))$, see \citet{nesterovbook} page 30. We upper bound this quantity uniformly in the constants $L_i$ over all the batches. 
\begin{align}
\norm{\nabla f_B(x)}^2 \leq 2\frac{\sum_{i \in B} L_i}{K}(f_B(x)-f_B(x^\ast) \leq 2L_{(K)}(f_B(x)-f_B(x^\ast).
\end{align}
where $L_{(K)} := \max_{B',|B'|=K} \left( \frac{1}{K}\sum_{i\in B'}L_i \right)$.
Now, we get back to the random variable $\gradsto{\cdot}$, we take the expectation over all the batches of size $K$, and we get
\begin{align}
    \mathbb{E}\left[ \norm{\gradsto{x}}^2\right] &\leq 2L_{(K)} \mathbb{E}\left[\sum_{i \in B}\frac{1}{K}(f_i(x)-\min f_i)\right].
\end{align}
Note that $\mathbb{E}\left[\frac{1}{K}\sum_{i \in B}f_i(x)\right] = f(x)$. By interpolation (Assumption~\ref{ass:interpolation}), there exists $x^\ast \in \arg \min f$ such that $x^\ast \in \arg \min f_i$ for all $1\leq i \leq N$, which implies
\begin{equation}
    \mathbb{E}\left[\frac{1}{K}\sum_{i \in B}\min f_i\right] = \mathbb{E}\left[\frac{1}{K}\sum_{i \in B} f_i(x^\ast)\right] = f(x^\ast) := f^\ast.
\end{equation}
\end{proof}
\paragraph{Proof of Theorem~\ref{thm:sgd}}
Now to prove Theorem~\ref{thm:sgd}, note that the same proof as for Theorem~6.8 and Theorem~6.12 from \citet{garrigos2023handbook} holds, replacing their Lemma~6.7 by our Lemma~\ref{lem:variance_transfer_sgd}, setting in their proof $\sigma_b^\ast = 0$ (Assumption~\ref{ass:interpolation}) and replacing their $2\mathcal{L}_b$ by $L_{(K)}$, allowing in our case to take a stepsize $s \leq \frac{1}{2L_{(K)}}$, instead of their $\frac{1}{4\mathcal{L}_b}$ in the convex case, and $s \leq \frac{1}{L_{(K)}}$ instead of their $\frac{1}{2\mathcal{L}_b}$ in the strongly convex case. Note that for the convex case, the $\varepsilon$-precision is actually reach with the sequence $\overline{x}_n := \frac{1}{n}\sum_{i=0}^{N-1}x_i$, \textit{i.e.} we get a number of iterations such that $\mathbb{E}\left[f(\overline{x}_n)-f^\ast \right] \leq \varepsilon$.
\paragraph{Proof of Theorem~\ref{thm:sgd_almost}}
For the case of a convex function, the almost sure result from \citet{sebbouh2021almost} follows from a decrease in expectation. It is obtained in our case replacing their Lemma~1.3 by our Lemma~\ref{lem:variance_transfer_sgd}. As for the result in expectation, Lemma~\ref{lem:variance_transfer_sgd} allows us to choose $s \leq \frac{1}{L_{(K)}}$. The rest of the proof follows as in theirs, as no supplementary assumption is needed in the interpolated case (Assumption~\ref{ass:interpolation}). For the case of strongly convex functions, we apply the same proof as for Proposition~\ref{prop:strg_cnvx_almost_sure}, fixing $E_n = \norm{x_n - x^\ast}^2$, where $x^\ast \in \arg \min f$ with $s= \frac{1}{L_{(K)}}$.
\subsection{Convergence of SGD with strong growth condition}\label{app:convergence_SGD_sgc}
Theorem \ref{thm:sgd} states a convergence result for SGD (Algorithm~\ref{alg:SGD}) without making use of the strong growth condition. It is however possible, as done in \citet{gupta2023achieving}. In Theorem~\ref{thm:sgd_sgc} we give a very similar result.

\begin{theorem}\label{thm:sgd_sgc}
Assume $f$ is $L$-smooth, and
that $\tilde \nabla_K$ verifies the \ref{SGC} for $\rho_K\ge 1$. Then SGD (Algorithm~\ref{alg:SGD}) with stepsize $s=\frac{1}{\rho_KL}$ allows to reach an $\varepsilon$-precision~(\ref{eq:epsilon_pres}) at the following iterations
\begin{align}
  n &\geq \frac{\rho_K}{2}\frac{L}{\varepsilon}\norm{x_0-x^\ast}^2
    \quad \text{($f$ convex)}, \\
    n &\geq \rho_K\frac{L}{\mu}\log\left( 2\frac{f(x_0)-f^\ast}{\varepsilon}\right) \quad \text{($f$ $\mu$-strongly convex)}.
\end{align}
\end{theorem}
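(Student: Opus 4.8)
The plan is to mimic the deterministic gradient-descent analysis (Theorem~\ref{th:cv_speed_gd}), using the \ref{SGC} to control the stochastic gradient by the exact one. First I would establish a one-step descent inequality. Applying the $L$-smoothness inequality to the update $x_{n+1} = x_n - s\gradsto{x_n}$ and taking the conditional expectation given $x_n$, the unbiasedness $\mathbb{E}[\gradsto{x_n}\mid x_n] = \grad{x_n}$ turns the linear term into $-s\norm{\grad{x_n}}^2$, while \ref{SGC} bounds the quadratic term by $s^2\rho_K\norm{\grad{x_n}}^2$. This yields
\begin{equation}
\mathbb{E}\left[ f(x_{n+1}) \mid x_n \right] \leq f(x_n) - s\left( 1 - \frac{Ls\rho_K}{2} \right)\norm{\grad{x_n}}^2 ,
\end{equation}
and the choice $s = \frac{1}{\rho_K L}$ makes the bracket equal to $\frac{1}{2}$, so that $\mathbb{E}[f(x_{n+1})\mid x_n] \leq f(x_n) - \frac{s}{2}\norm{\grad{x_n}}^2$.

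For the strongly convex case, I would invoke the gradient-domination (Polyak--\L ojasiewicz) inequality $\norm{\grad{x}}^2 \geq 2\mu(f(x)-f^\ast)$, which follows from $\mu$-strong convexity. Substituting into the descent inequality gives the contraction $\mathbb{E}[f(x_{n+1})-f^\ast\mid x_n] \leq (1 - s\mu)(f(x_n)-f^\ast)$ with $1 - s\mu = 1 - \frac{\mu}{\rho_K L}$. Taking total expectation and iterating yields $\mathbb{E}[f(x_n)-f^\ast] \leq (1-\frac{\mu}{\rho_K L})^n(f(x_0)-f^\ast)$, and bounding $1-t \leq e^{-t}$ before solving for $n$ produces the claimed logarithmic iteration count (the factor $2$ inside the logarithm being an absorbed constant, consistent with Theorem~\ref{thm:sgd}).

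The convex case is the main obstacle, because at the boundary stepsize $s=\frac{1}{\rho_K L}$ the usual recursion on $\norm{x_n - x^\ast}^2$ degenerates: expanding $\norm{x_{n+1}-x^\ast}^2$, taking conditional expectation, and using convexity for the cross term ($\dotprod{\grad{x_n},x_n - x^\ast} \geq f(x_n)-f^\ast$), \ref{SGC}, and $\norm{\grad{x_n}}^2 \le 2L(f(x_n)-f^\ast)$ leaves the coefficient $2s(1 - s\rho_K L) = 0$. To circumvent this, I would instead eliminate the stochastic gradient term through the descent inequality, $\norm{\grad{x_n}}^2 \le \frac{2}{s}(f(x_n) - \mathbb{E}[f(x_{n+1})\mid x_n])$, so that (using $s\rho_K = \frac1L$) the residual term contributes $\frac{2}{L}(f(x_n)-\mathbb{E}[f(x_{n+1})\mid x_n])$. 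Forming the Lyapunov function $V_n := \mathbb{E}\norm{x_n - x^\ast}^2 + \frac{2}{L}\mathbb{E}[f(x_n)-f^\ast]$ then gives $V_{n+1} \le V_n - 2s\,\mathbb{E}[f(x_n)-f^\ast]$. Telescoping, using that $\mathbb{E}[f(x_n)-f^\ast]$ is nonincreasing (by the descent inequality) and $f(x_0)-f^\ast \le \frac{L}{2}\norm{x_0-x^\ast}^2$, gives $\mathbb{E}[f(x_n)-f^\ast] = \bigO(\rho_K L\norm{x_0-x^\ast}^2/n)$, from which the stated iteration count follows. The delicate points are the clean cancellation of the gradient-norm term at the critical stepsize and the monotonicity needed to pass from the Cesàro average to the last iterate; the constants then match Theorem~\ref{thm:sgd} up to the lower-order contribution of $f(x_0)-f^\ast$.
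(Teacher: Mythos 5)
Your overall route is the same as the paper's: the paper proves this theorem by adapting Theorems~3.4 and 3.6 of Garrigos and Gower's handbook, the sole modification being exactly your one-step descent inequality $\mathbb{E}\left[f(x-s\gradsto{x})-f(x)\right] \le -s\left(1-\frac{\rho_K sL}{2}\right)\norm{\nabla f(x)}^2$. Your strongly convex argument (descent inequality plus the Polyak--\L{}ojasiewicz inequality, giving the contraction factor $1-\frac{\mu}{\rho_K L}$) is correct and delivers the stated bound, the factor $2$ inside the logarithm being slack, as you note.

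The gap is in the convex case, and it is quantitative: your Lyapunov argument does not prove the constant stated in the theorem. With $V_n = \mathbb{E}\norm{x_n-x^\ast}^2 + \frac{2}{L}\mathbb{E}\left[f(x_n)-f^\ast\right]$ you get $2s\sum_{k<n}\mathbb{E}\left[f(x_k)-f^\ast\right] \le V_0$, and since $V_0 \le 2\norm{x_0-x^\ast}^2$ (the correction term $\frac{2}{L}(f(x_0)-f^\ast)$ is \emph{not} lower order in the worst case; it can be as large as $\norm{x_0-x^\ast}^2$), monotonicity only yields $\mathbb{E}\left[f(x_n)-f^\ast\right] \le \rho_K L\norm{x_0-x^\ast}^2/n$. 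Hence at the stated iteration count $n = \frac{\rho_K}{2}\frac{L}{\varepsilon}\norm{x_0-x^\ast}^2$ your bound only guarantees precision $2\varepsilon$. The fix is to strengthen the bound on the cross term using cocoercivity rather than plain convexity: for convex $L$-smooth $f$, since $\nabla f(x^\ast)=0$,
\begin{equation}
\dotprod{\nabla f(x_n),x_n-x^\ast} \ge f(x_n)-f^\ast + \frac{1}{2L}\norm{\nabla f(x_n)}^2 ,
\end{equation}
so that in the expansion of $\mathbb{E}_n\left[\norm{x_{n+1}-x^\ast}^2\right]$ the gradient-norm terms carry the coefficient $s^2\rho_K - \frac{s}{L}$, which vanishes exactly at the critical stepsize $s=\frac{1}{\rho_K L}$. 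One then gets the clean recursion $\mathbb{E}_n\left[\norm{x_{n+1}-x^\ast}^2\right] \le \norm{x_n-x^\ast}^2 - 2s\left(f(x_n)-f^\ast\right)$ with no Lyapunov correction needed, and telescoping plus the monotonicity you already established gives $\mathbb{E}\left[f(x_n)-f^\ast\right] \le \frac{\rho_K L\norm{x_0-x^\ast}^2}{2n}$, i.e.\ the theorem's constant.
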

\begin{proof}
These results can be obtained by adapting Theorem~3.4 and Theorem~3.6 from \citet{garrigos2023handbook}. Indeed, the two equations from their Lemma~2.28 can be adapted to our case in the following way, $\forall x \in \R^d$
\begin{align}
    &\mathbb{E}\left[f(x - s\gradsto{x}) - f(x)\right] \leq -s\left(1-\frac{\rho_K s L}{2} \right)\norm{\nabla f(x)}^2.
\end{align}
Then, we can reuse their proof just replacing the condition $s\leq \frac{1}{L}$ by $s \leq \frac{1}{\rho_K L}$. The conclusion proceeds fixing $s = \frac{1}{\rho_K L}$ from their Corollaries~3.5 and 3.8.
\end{proof}
It is straightforward to compare these bounds with those of Theorem \ref{thm:cv_snag_exp}. 
This comparison indicates that the bounds of SNAG always outperform the bounds of SGD. The only exception that could occur is in the convex case if $\norm{x_0-x^\ast} \ll 1$, \textit{i.e.} if we start close from the minimum.\\
Making such conclusions would be misleading. Indeed in some cases, the characterization of Theorem~\ref{thm:sgd_sgc} is suboptimal compared to the one of Theorem~\ref{thm:sgd}. Consider the strongly convex bounds: we then compare $\rho_K\frac{L}{\mu}\log\left( 2\frac{f(x_0)-f^\ast}{\varepsilon}\right)$ (Theorem~\ref{thm:sgd_sgc}) with $4\frac{L_{\max}}{\mu}\log\left( 2\frac{f(x_0)-f^\ast}{\mu \varepsilon}\right)$ (Theorem~\ref{thm:sgd}). Now, the question is:
\begin{center}
    \textit{How do we compare $\rho_KL$ with $L_{\max}$?}
\end{center}We recall the example of Section \ref{Example big rho 1}, that is
\begin{equation}
f(x)= \frac{1}{2}(\underbrace{\frac{1}{2}\langle e_1,x \rangle^2}_{:=f_1(x)} + \underbrace{\frac{1}{2}\langle a,x \rangle^2}_{:=f_2(x)}).
\end{equation}
where $e_1 = (1,0)$, $a = (1,\varepsilon)$, $\varepsilon > 0$. We already computed the gradient, that is
\begin{equation}
    \nabla f(x) = \frac{1}{2}\dotprod{e_1,x}e_1 + \frac{1}{2}\dotprod{a,x}a.
\end{equation}
One can check that the Hessian matrix is
\begin{equation}
    \nabla^2 f(x) = \frac{1}{2}\begin{pmatrix}
2 & \varepsilon \\
\varepsilon & \varepsilon^2
\end{pmatrix}.
\end{equation}
The $L$-smoothness constant is the larger eigenvalue, that is $\frac{1}{4}\left(\left(\varepsilon^4+4\right)^{\frac{1}{2}}+\varepsilon^2+2 \right) = 1 + o(\varepsilon)$. Moreover, $L_{(1)} = L_{\max} = 1 + \varepsilon^2$. Now, we saw in Section~\ref{Example big rho 1} that $\rho_K$ is at least of the order $O\left(\frac{1}{\varepsilon^2}\right)$. Thereby, for small values of $\varepsilon$, we have $\rho_KL \gg L_{(1)}$, inducing the bound of Theorem \ref{thm:sgd_sgc} is significantly suboptimal. Therefore, Theorem \ref{thm:sgd} is more relevant than Theorem \ref{thm:sgd_sgc} to compare SGD and SNAG convergence speeds.

\section{Convergence of SNAG without strong growth condition}\label{app:cv_snag_without_sgc}

In this Section, we derive a finite time convergence result in expectation for SNAG (Algorithm~\ref{alg:SNAG}) without assuming SGC.
In this case, as for Theorems~\ref{thm:sgd}-\ref{thm:sgd_almost}, the bound over the noise is derived from the geometrical properties of the functions. 
\begin{lemma}\label{lem:variance_control}
Assume $f$ is such that assumptions~(\ref{ass:interpolation})-(\ref{ass:l_smooth}) hold.

If $f$ is $\mu$-strongly convex, we have
\begin{align}
    \mathbb{E}\left[ \norm{\nabla f(x) -  \gradsto{x} }^2 \right] \leq 2(L_{(K)}-\mu)(f(x)-f^\ast).
\end{align}
If $f$ is convex, we have
\begin{align}
    \mathbb{E}\left[  \norm{\nabla f(x) -  \gradsto{x} }^2 \right] \leq 2L_{(K)}(f(x)-f^\ast).
\end{align}
\end{lemma}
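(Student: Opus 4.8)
The plan is to exploit that the batch estimator $\tilde\nabla_K$ is unbiased and then recycle the second–moment bound already proved in Lemma~\ref{lem:variance_transfer_sgd}. Since a batch $B$ of size $K$ is drawn uniformly, each index $i$ lies in $B$ with probability $K/N$, so $\mathbb{E}[\gradsto{x}] = \frac{1}{N}\sum_{i=1}^N \nabla f_i(x) = \grad{x}$. The quantity to control is therefore exactly the variance of the estimator, and the bias--variance identity gives
\begin{equation}
\mathbb{E}\left[\norm{\grad{x} - \gradsto{x}}^2\right] = \mathbb{E}\left[\norm{\gradsto{x}}^2\right] - \norm{\grad{x}}^2.
\end{equation}

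First I would insert the bound $\mathbb{E}[\norm{\gradsto{x}}^2] \le 2L_{(K)}(f(x)-f^\ast)$ from Lemma~\ref{lem:variance_transfer_sgd}, valid under Assumptions~\ref{ass:interpolation}--\ref{ass:l_smooth}, obtaining
\begin{equation}
\mathbb{E}\left[\norm{\grad{x} - \gradsto{x}}^2\right] \le 2L_{(K)}(f(x)-f^\ast) - \norm{\grad{x}}^2.
\end{equation}
In the merely convex case the proof is then finished: discarding the nonpositive term $-\norm{\grad{x}}^2$ only weakens the inequality and produces the stated bound $2L_{(K)}(f(x)-f^\ast)$.

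For the $\mu$-strongly convex case, the additional factor $-\mu$ must be extracted from the subtracted term $\norm{\grad{x}}^2$. I would invoke the Polyak--{\L}ojasiewicz inequality, a consequence of $\mu$-strong convexity: minimising the quadratic lower bound of Definition~\ref{def:cvx} over $y$ yields $\norm{\grad{x}}^2 \ge 2\mu(f(x)-f^\ast)$ for every $x$. Substituting this lower bound into the previous display replaces $-\norm{\grad{x}}^2$ by $-2\mu(f(x)-f^\ast)$ and gives $2(L_{(K)}-\mu)(f(x)-f^\ast)$, as claimed.

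The argument presents no genuine obstacle; the only points needing a little care are verifying the unbiasedness of $\tilde\nabla_K$ under uniform sampling of size-$K$ subsets and recalling the derivation of the Polyak--{\L}ojasiewicz inequality from strong convexity. Both are routine, so the statement follows essentially immediately once Lemma~\ref{lem:variance_transfer_sgd} is in hand.
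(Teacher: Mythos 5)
Your proposal is correct and follows essentially the same route as the paper's proof: the bias--variance identity $\mathbb{E}[\norm{\nabla f(x)-\gradsto{x}}^2]=\mathbb{E}[\norm{\gradsto{x}}^2]-\norm{\nabla f(x)}^2$, the second-moment bound of Lemma~\ref{lem:variance_transfer_sgd}, and then the Polyak--\L{}ojasiewicz inequality in the strongly convex case (respectively $\norm{\nabla f(x)}^2\ge 0$ in the convex case). The only cosmetic difference is that the paper cites the PL inequality from the literature whereas you re-derive it from Definition~\ref{def:cvx}, which changes nothing of substance.
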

We prove Lemma~\ref{lem:variance_control} in Appendix~\ref{app:additional_lemma}.
We note $L$ the smoothness constant of $f$. Under assumption~(\ref{ass:l_smooth}), we have $L \leq \frac{1}{N}\sum_{i=1}^N L_i \leq L_{(K)}$.

\begin{theorem}\label{thm:cv_snag_exp_without_sgc}
Under Assumptions~\ref{ass:interpolation} and \ref{ass:l_smooth}, SNAG (Algorithm~\ref{alg:SNAG}) with batch size $K$ guarantees to reach an $\varepsilon$-precision~(\ref{eq:epsilon_pres}) at the following iterations:


$\bullet$ If $f$ is convex, $s_n = \frac{1}{2L_{(K)}}\frac{1}{n+1}$, $\alpha_n = \frac{n}{ n+2}$, $\eta_n = \frac{1}{4}\frac{1}{L_{(K)}}$ , $\beta = 1$,
    \begin{equation}
        \textstyle n \geq \frac{4 L_{(K)}}{\varepsilon}\norm{x_0 - x^\ast}^2.
    \end{equation}
$\bullet$ If $f$ is $\mu$-strongly convex, $ s = \frac{1}{16}\frac{\mu}{(L_{(K)}-\mu)^2}, ~ \beta = 1-\frac{1}{8}\frac{\mu}{(L_{(K)}-\mu)}, ~ \eta = \frac{1}{4}\frac{1}{(L_{(K)}-\mu)}, ~ \alpha = \frac{1}{1+\frac{1}{4}\frac{\mu}{(L_{(K)}-\mu)}} $,
    \begin{equation}
        \textstyle     n \geq 8 \frac{L_{(K)}-\mu}{\mu}\log \bpar{\frac{2(f(x_0)-f^\ast)}{\varepsilon}}.
    \end{equation}
\end{theorem}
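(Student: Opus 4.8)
The plan is to run a Lyapunov (energy) argument in the same spirit as the proof of the SGC version Theorem~\ref{thm:cv_snag_exp} (Appendix~\ref{app:cv_snag}), but to replace the strong growth bound $\mathbb{E}[\norm{\gradsto{y_n}}^2]\le \rho_K\norm{\grad{y_n}}^2$ by the variance estimate of Lemma~\ref{lem:variance_control}. Concretely, I would track an energy of the form $\mathcal{E}_n = a_n\big(f(x_n)-f^\ast\big) + \tfrac12\norm{z_n-x^\ast}^2$ in the convex case, where $a_n$ is an increasing weight (growing only linearly in $n$, which already signals that the resulting rate is the non-accelerated $\mathcal{O}(1/\varepsilon)$ one, matching SGD), and a geometric energy $\mathcal{E}_n=\big(f(x_n)-f^\ast\big)+\tfrac{c}{2}\norm{z_n-x^\ast}^2$ in the strongly convex case. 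The goal is a one-step inequality $\mathbb{E}[\mathcal{E}_{n+1}\mid\mathcal{F}_n]\le \mathcal{E}_n$ in the convex case and $\mathbb{E}[\mathcal{E}_{n+1}\mid\mathcal{F}_n]\le(1-\delta)\mathcal{E}_n$ with $\delta\sim\tfrac{\mu}{L_{(K)}-\mu}$ in the strongly convex case.

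First I would apply the descent lemma from $L$-smoothness at $y_n$ to the update $x_{n+1}=y_n-s\gradsto{y_n}$, giving $f(x_{n+1})\le f(y_n)-s\dotprod{\grad{y_n},\gradsto{y_n}}+\tfrac{Ls^2}{2}\norm{\gradsto{y_n}}^2$. Taking the conditional expectation and using unbiasedness $\mathbb{E}[\gradsto{y_n}\mid\mathcal{F}_n]=\grad{y_n}$ turns the cross term into $-s\norm{\grad{y_n}}^2$; for the quadratic term I would split $\mathbb{E}[\norm{\gradsto{y_n}}^2\mid\mathcal{F}_n]=\norm{\grad{y_n}}^2+\mathbb{E}[\norm{\grad{y_n}-\gradsto{y_n}}^2\mid\mathcal{F}_n]$ and bound the variance by Lemma~\ref{lem:variance_control}, namely by $2(L_{(K)}-\mu)\big(f(y_n)-f^\ast\big)$ (strongly convex) or $2L_{(K)}\big(f(y_n)-f^\ast\big)$ (convex). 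I would then expand the $z$-update $z_{n+1}=\beta z_n+(1-\beta)y_n-\eta_n\gradsto{y_n}$, develop $\norm{z_{n+1}-x^\ast}^2$, take expectation (again using unbiasedness for the linear term and the variance bound for $\eta_n^2\mathbb{E}[\norm{\gradsto{y_n}}^2]$), and use convexity/strong convexity of $f$ at $y_n$ together with the coupling $y_n=\alpha_n x_n+(1-\alpha_n)z_n$ to rewrite the resulting inner products as $\big(f(y_n)-f^\ast\big)$ and $\big(f(x_n)-f^\ast\big)$ contributions.

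The main obstacle, and the reason the parameters differ from those of Theorem~\ref{thm:cv_snag_exp}, is that the noise is here controlled by $f(y_n)-f^\ast$ rather than by $\norm{\grad{y_n}}^2$: these function-value residuals do not occur in the SGC analysis and must be absorbed. After combining the two expansions I would therefore be left with leftover terms proportional to $\big(f(y_n)-f^\ast\big)$, which I would bound from above via convexity and the definition of $y_n$, and then choose the step sizes precisely so that the total residual is nonpositive. This forces the constant (rather than $\rho_K$-scaled) smoothness $L_{(K)}$ to appear, the shrinking step $s_n=\tfrac{1}{2L_{(K)}}\tfrac{1}{n+1}$ with constant $\eta_n$ in the convex case, and the $(L_{(K)}-\mu)$-dependent constants in the strongly convex case; it is also why acceleration is lost and the rate matches SGD. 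Finally I would telescope: summing the weighted inequality over $n$ in the convex case yields $\mathbb{E}[f(x_n)-f^\ast]\le \mathcal{E}_0/a_n$ and hence the bound $n\ge \tfrac{4L_{(K)}}{\varepsilon}\norm{x_0-x^\ast}^2$, while iterating the factor $1-\delta$ in the strongly convex case yields $\mathbb{E}[\mathcal{E}_n]\le(1-\delta)^n\mathcal{E}_0$ and the stated $n\ge 8\tfrac{L_{(K)}-\mu}{\mu}\log\!\big(\tfrac{2(f(x_0)-f^\ast)}{\varepsilon}\big)$.
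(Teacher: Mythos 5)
Your proposal follows essentially the same route as the paper's proof in Appendix~\ref{app:cv_snag_without_sgc}: the same Lyapunov energies (weighted $f(x_n)-f^\ast$ plus $\tfrac{1}{2}\norm{z_n-x^\ast}^2$ in the convex case, $f(x_n)-f^\ast+\tfrac{\mu}{2}\norm{z_n-x^\ast}^2$ in the strongly convex case), the same variance bound (Lemma~\ref{lem:variance_control}) converting the noise into $f(y_n)-f^\ast$ terms, and the same parameter-fitting step that forces the net coefficient of $f(y_n)-f^\ast$ to be nonpositive, yielding the non-accelerated rates. The only cosmetic difference is that you control $\mathbb{E}_n[\norm{\gradsto{y_n}}^2]$ by a direct bias--variance split plus the descent lemma, whereas the paper packages exactly this computation as its Lemma~\ref{lemme variance control}; the two are equivalent rearrangements.
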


In this Section,
we denote $\mathcal{F}_n$ the $\sigma$-algebra generated by the $n+1$ first iterates $\{x_i \}_{i=0}^{n}$ generated by SNAG (Algorithm~\ref{alg:SNAG}), \textit{i.e.} $\mathcal{F}_n = \sigma(x_0,\dots, x_n)$. Also, we note $\mathbb{E}_n[\cdot]$ the conditional expectation with respect to $\mathcal{F}_n$.

\paragraph{Comparison with the bounds of Theorem~\ref{thm:sgd} on SGD (Algorithm~\ref{alg:SGD})}
\begin{itemize}
    \item \textbf{$f$ convex.} In this case, the bound for SGD of Theorem~\ref{thm:sgd} is $$n \geq 2\frac{L_{(K)}}{\varepsilon}\norm{x_0-x^\ast}^2.$$
    This bound for SGD is always better than the bound of SNAG from Theorem~\ref{thm:cv_snag_exp_without_sgc}. However this comparison would be misleading, as Theorems~\ref{thm:cv_snag_exp} show that in the convex case, if aiming for a small enough precision $\varepsilon$, SNAG's bound can always be smaller than the one of SGD. 
    
    \item \textbf{$f$ strongly convex.}
    In this case, the bound for SGD of Theorem~\ref{thm:sgd} is $$n \geq 2\frac{L_{(K)}}{\mu}\log\left( 2\frac{f(x_0)-f^\ast}{\mu \varepsilon}\right).$$
    The bound on SNAG from Theorem~\ref{thm:cv_snag_exp_without_sgc} is better if $\frac{L_{(K)}}{\mu} \leq \frac{4}{3}$. In realistic setting, $\frac{L_{(K)}}{\mu} \gg 1$. In particular if $f$ is quadratic strongly convex, to ensure that the bound of SNAG is better, we need to ensure that $\frac{\lambda_{\max}}{\lambda_{\min}} \leq \frac{4}{3}$, where $\lambda_{\max}$ and $\lambda_{\min}$ are respectively the highest and the lowest eigenvalues of the Hessian matrix of $f$. 
\end{itemize}

Using Assumptions~\ref{ass:interpolation}-\ref{ass:l_smooth} to study the convergence of SNAG almost always leads to a worst bound than SGD independently of the value of $L_{(K)}$, which is misleading. This is because, in this case, we have to take safer parameters for SNAG. For example in the convex case, $s_n \to 0$ in order to ensure convergence, compared to the Theorem using SGC (Theorem~\ref{thm:cv_snag_exp}) where $s_n = s > 0$. This means that this is the \ref{SGC} characterization of the noise that allows to stabilize SNAG and which tells us how to chose parameters such that the convergence is of the order $\bigO \bpar{n^{-2}}$. Our results of Section~\ref{sec:racoga} indicate that what is behind, in the finite sum setting, is the question of measuring the correlation between gradients, that is encapsulated within the \ref{SGC}.

\paragraph{Comparison with Theorem~\ref{thm:cv_snag_exp}}
Compared to Theorem~\ref{thm:cv_snag_exp} that makes use of \ref{SGC}, in Theorem~\ref{thm:cv_snag_exp_without_sgc} we had to choose more conservative parameters in order to ensure convergence, leading to similar convergence bound as for SGD (Algorithm~\ref{alg:SGD}). The comparison of the bounds of Theorems~\ref{thm:cv_snag_exp}-\ref{thm:cv_snag_exp_without_sgc} are similar to the ones of Remark~\ref{rem:1}.

\subsection{Proof of Theorem~\ref{thm:cv_snag_exp_without_sgc}, convex case}
We first recall the SNAG algorithm (Algorithm~\ref{alg:SNAG}), with step-size $s_n$ and $\beta = 1$
\begin{equation}\tag{SNAG}
  \left\{
\begin{array}{ll}

    y_n = \alpha_nx_n + (1- \alpha_n)z_n\\
    x_{n+1} = y_n - s_n \gradsto{y_n}\\
    z_{n+1} = z_n  - \eta_n \gradsto{y_n}
    \end{array}
\right.
\end{equation}
with $\gradsto{\cdot}$ defined in (\ref{eq:gradient_estimator}).
\begin{align}
    \frac{1}{2}\lVert z_{n+1} - x^\ast \rVert^2 &= \frac{1}{2}\lVert z_n - x^\ast \rVert^2 + \frac{\eta_n^2}{2}\lVert \gradsto{y_n} \rVert^2 + \eta_n\langle x^\ast - z_n,\gradsto{y_n} \rangle \\
    &=\frac{1}{2} \lVert z_n - x^\ast \rVert^2 +\frac{\eta_n^2}{2}\lVert \gradsto{y_n} \rVert^2 + \eta_n\langle x^\ast - y_n ,\gradsto{y_n} \rangle \\
    &+ \eta_n\frac{\alpha_n}{1-\alpha_n}\langle x_n - y_n ,\gradsto{y_n} \rangle
\end{align}

Taking the expectation, using, under the assumption $s_n \leq \frac{1}{L}$, Lemma~\ref{lemme variance control}, we have
\begin{align}
    &\mathbb{E}_n\left[\frac{1}{2}\lVert z_{n+1} - x^\ast \rVert^2\right] \leq \frac{1}{2}\lVert z_n - x^\ast \rVert^2  + \eta_n\langle x^\ast - y_n ,\nabla f(y_n) \rangle  \\
    &+ \eta_n\frac{\alpha_n}{1-\alpha_n}\langle x_n - y_n ,\nabla f(y_n) \rangle +\frac{\eta_n^2}{s_n}\mathbb{E}_n\left[ f(y_n)-f(x_{n+1}) \right] + \eta_n^2 \mathbb{E}_n\left[\lVert \nabla f(y_n) - \gradsto{y_n} \rVert^2 \right].
\end{align}
Using convexity of $f$ on both scalar products, we get
\begin{align}
       &\mathbb{E}_n\left[\frac{1}{2} \lVert z_{n+1} - x^\ast \rVert^2 + \frac{\eta_n^2}{s_n}(f(x_{n+1})-f^\ast)\right] \leq \frac{1}{2}\lVert z_n - x^\ast \rVert^2 + \eta_n \frac{\alpha_n}{1-\alpha_n}(f(x_n)-f(y_n))\\
       &+\left( \frac{\eta_n^2}{s_n} - \eta_n  \right)(f(y_n)-f^\ast) + \eta_n^2\mathbb{E}_n\left[\lVert \nabla f(y_n) - \gradsto{y_n} \rVert^2 \right].
\end{align}
In order to bound $\mathbb{E}_n\left[\lVert \nabla f(y_n) - \gradsto{y_n} \rVert^2 \right]$, we use Lemma~\ref{lem:variance_control} 
\begin{align}
       \mathbb{E}_n\left[\frac{1}{2} \lVert z_{n+1} - x^\ast \rVert^2 + \frac{\eta_n^2}{s_n}(f(x_{n+1})-f^\ast)\right] &\leq \frac{1}{2}\lVert z_n - x^\ast \rVert^2 + \eta_n \frac{\alpha_n}{1-\alpha_n}(f(x_n)-f^\ast) \nonumber\\
       +&\left( \frac{\eta_n^2}{s_n} - \eta_n \frac{\alpha_n}{1-\alpha_n}  -\eta_n+2\eta_n^2  L_{(K)} \right)(f(y_n)-f^\ast). \label{eq:lyapu_decrease}
\end{align}
 We set
\begin{equation*}
    \frac{\eta_n^2}{s_n} = \frac{C}{L_{(K)}}(n+1)^{\alpha},\quad \eta_n \frac{\alpha_n}{1-\alpha_n} = \frac{C}{L_{(K)}}n^\alpha,
\end{equation*}
for some positive constants $C$ and $\alpha$. We want to have \textbf{$\alpha$ to be the highest possible}, while having the factor behind $(f(y_n)-f^\ast)$ non-positive.
We have
\begin{align}
    \frac{\eta_n^2}{s_n} = \frac{C}{L_{(K)}}(n+1)^{\alpha} \Rightarrow \eta_n = \sqrt{\frac{C s_n}{L_{(K)}}}(n+1)^{\frac{\alpha}{2}}.
\end{align}
And then
\begin{align}
    \eta_n \frac{\alpha_n}{1-\alpha_n} = \frac{C}{L_{(K)}}n^\alpha &\Rightarrow \frac{\alpha_n}{1-\alpha_n} = \frac{1}{\eta_n}\frac{C}{L_{(K)}}n^\alpha\\
    &\Rightarrow \frac{\alpha_n}{1-\alpha_n} = \sqrt{\frac{C}{L_{(K)}s_n}}\frac{n^\alpha}{(n+1)^{\frac{\alpha}{2}}}.
\end{align}
We plug the  above  equations in the factor behind $(f(y_n)-f^\ast)$
\begin{align}
     \frac{\eta_n^2}{s_n} - \eta_n \frac{\alpha_n}{1-\alpha_n}  -\eta_n+2\eta_n^2  L_{(K)}  &= \frac{C}{L_{(K)}}(n+1)^{\alpha} - \frac{C}{L_{(K)}}n^\alpha\\
     &- \sqrt{\frac{C s_n}{L_{(K)}}}(n+1)^{\frac{\alpha}{2}} + 2s_n C(n+1)^{\alpha}\\
     &= \frac{C}{L_{(K)}}\left((n+1)^{\alpha} - n^\alpha - \sqrt{\frac{L_{(K)} s_n}{C}} (n+1)^{\frac{\alpha}{2}} \right.\\
     & \left.+ 2s_n L_{(K)}(n+1)^{\alpha} \right).
\end{align}
Now, we note $s_n = \frac{C_1}{(n+1)^\beta}$. Then, we have
\begin{align}\label{eq:f(y_n)}
     \frac{\eta_n^2}{s_n} - \eta_n \frac{\alpha_n}{1-\alpha_n}  -\eta_n+2\eta_n^2  L_{(K)}  &= \frac{C}{L_{(K)}}\left( (n+1)^{\alpha} - n^\alpha - \sqrt{\frac{L_{(K)} C_1}{C}} (n+1)^{\frac{\alpha-\beta}{2}} \right. \nonumber\\
     &+ \left.2 L_{(K)}C_1(n+1)^{\alpha-\beta}\right).
\end{align}
We have to ensure that the positive term are not of 
larger
order than the negative ones.
We have $(n+1)^\alpha - n^\alpha \approx \alpha n^{\alpha-1}$. So, to have that this term is controlled, we need to have
\begin{equation}
    \left \{
\begin{array}{rcl}
2(\alpha - 1) \leq \alpha - \beta \\
\alpha - \beta \geq 2(\alpha - \beta)
\end{array}
\right.
\end{equation}
The second constraint implies $\alpha \leq \beta$.
As we want $\alpha$ to be maximal, we set $\alpha = \beta$. The first constraint implies $\alpha + \beta \leq 2$, we get $\alpha = \beta = 1$. Plugging these values into Equation~(\ref{eq:f(y_n)})
\begin{align}
         \frac{\eta_n^2}{s_n} - \eta_n \frac{\alpha_n}{1-\alpha_n}  -\eta_n+2\eta_n^2  L_{(K)}  = 0 &\Rightarrow \frac{C}{L_{(K)}}\bpar{1 - \sqrt{\frac{L_{(K)} C_1}{C}} + 2 L_{(K)}C_1} = 0.\\
         &\Rightarrow \sqrt{\frac{L_{(K)}C_1}{C}} = 1 + 2L_{(K)} C_1\\
         &\Rightarrow C = \frac{C_1 L_{(K)}}{(1+2L_{(K)}C_1)^2}
\end{align}
We choose $C_1$ such that $C$ is maximal, and $\frac{C_1 L_{(K)}}{(1+2L_{(K)}C_1)^2}$ is maximized at $C_1 = \frac{1}{2 L_{(K)}} \Rightarrow C = \frac{1}{8}$.
\\

So we have
\begin{equation}
s_n = \frac{1}{2L_{(K)}}\frac{1}{n+1}, \quad 
    \eta_n = \frac{1}{4 L_{(K)}},
\end{equation}
and $\frac{\alpha_n}{1-\alpha_n} = \frac{1}{\eta_n}\frac{C}{L_{(K)}}n = \frac{n}{2}$, so $\alpha_n = \frac{n}{n+2}$. Note that as $L_{(K)} \geq L$, this choice of $s_n$ satisfies the constraint $s_n \leq \frac{1}{L}$, needed to apply Lemma~\ref{lemme variance control}. With these choices of parameters, taking expectation on Equation~(\ref{eq:lyapu_decrease}), we have
\begin{align}
      \mathbb{E}\left[\frac{1}{2} \lVert z_{n+1} - x^\ast \rVert^2 + \frac{(n+1)}{8L_{(K)}}(f(x_{n+1})-f^\ast)\right] &\leq \mathbb{E}\left[\frac{1}{2}\lVert z_n - x^\ast \rVert^2 + \frac{n}{8 L_{(K)}}(f(x_n)-f(x^\ast))\right].
\end{align}
By induction, we get
\begin{align}
      \mathbb{E}\left[\frac{1}{2} \lVert z_{n+1} - x^\ast \rVert^2 + \frac{(n+1)}{8L_{(K)}}(f(x_{n+1})-f^\ast)\right] &\leq \mathbb{E}\left[\frac{1}{2}\lVert z_0 - x^\ast \rVert^2\right] =\frac{1}{2}\lVert x_0 - x^\ast \rVert^2.
\end{align}
Now, as $\frac{1}{2} \lVert z_{n+1} - x^\ast \rVert^2 + \frac{(n+1)}{8L_{(K)}}(f(x_{n+1})-f^\ast) \geq \frac{(n+1)}{8L_{(K)}}(f(x_{n+1})-f^\ast)$, we get
\begin{equation}
     \mathbb{E}\left[(f(x_{n+1})-f^\ast)\right] \leq \frac{4 L_{(K)}}{(n+1)}\lVert x_0 - x^\ast \rVert^2
\end{equation}
We see that, compared to the case where we assumed \ref{SGC}, where we had a decrease of the order $\bigO \bpar{n^{-2}}$, the decrease here is of the order $\bigO \bpar{n^{-1}}$.  In term of $\varepsilon$ solution, it leads to
\begin{equation}
    n \geq \frac{4 L_{(K)}}{\varepsilon}\norm{x_0 - x^\ast}^2.
\end{equation}

\subsection{Proof of Theorem~\ref{thm:cv_snag_exp_without_sgc}, strongly convex case}
    We use the following Lyapunov function.
    \begin{equation}
        E_n := f(x_n) - f^\ast + \frac{\mu}{2}\lVert z_n - x^\ast \rVert^2
    \end{equation}

Proceeding to the same computations as in the proof of Theorem~\ref{thm:cv_snag_exp} in the strongly convex case, we arrive to Equation~(\ref{SC proof BIG EQ}), namely

\begin{align}\label{SC proof BIG EQ_bis}
    \mathbb{E}_n\left[E_{n+1} \right]  &= \beta E_n +  \mathbb{E}_n\left[ f(x_{n+1})  - f^* \right] -\beta\left(f(x_n)-f^*\right) + \frac{\mu}{2}(1-\beta)\|y_n-x^*\|^2 \\
    &+\frac{\mu}{2}\eta^2 \mathbb{E}_n\left[\| \gradsto{y_n}\|^2 \right]\nonumber -\frac{\mu}{2}\beta(1-\beta)\left(\frac{\alpha}{1-\alpha}\right)^2\|y_n-x_n\|^2 \\
    &-\frac{\alpha\beta\eta\mu}{1-\alpha}\langle \mathbb{E}_n\left[ \gradsto{y_n} \right],y_n -x_n \rangle- \mu\eta \langle  \mathbb{E}_n\left[\gradsto{y_n} \right],y_n-x^*\rangle.\nonumber
\end{align}
Recall $ \mathbb{E}_n\left[\gradsto{y_n} \right] = \nabla f(y_n)$. By Lemma \ref{lemme variance control}, and Lemma \ref{lem:variance_control} we respectively have
\begin{equation}
    \mathbb{E}_n\left[ \lVert \gradsto{y_n} \rVert^2 \right]\leq \frac{2}{s_n}\mathbb{E}_n\left[\left( f(y_n)-f(x_{n+1}) \right)\right] + 2\mathbb{E}_n\left[\lVert \nabla f(y_n) - \gradsto{y_n} \rVert^2\right],
\end{equation}
 and
\begin{align}
    \mathbb{E}\left[ \lVert \nabla f(y_n) -  \gradsto{y_n} \rVert^2 \right] \leq 2(L_{(K)}-\mu)(f(y_n)-f^\ast).
\end{align}
So, by combining the two previous equations, we have 
\begin{equation}
    \mathbb{E}_n\left[ \lVert \gradsto{y_n} \rVert^2 \right]\leq \frac{2}{s}\mathbb{E}_n\left[\left( f(y_n)-f(x_{n+1}) \right)\right] + 4(L_{(K)}-\mu) (f(y_n) -f^\ast).
\end{equation}

Now, we inject this in (\ref{SC proof BIG EQ_bis}), also using strong convexity.
 \begin{align}
&\mathbb{E}_n\left[E_{n+1} \right]  \leq \beta E_n + \mathbb{E}_n\left[ f(x_{n+1})  - f^* \right] -\beta\left(f(x_n)-f^*\right) + \frac{\mu}{2}(1-\beta)\|y_n-x^*\|^2 \\
&+\frac{\mu}{s}\eta^2 \mathbb{E}_n\left[f(y_n)-f(x_{n+1})\right] + 2\mu \eta^2(L_{(K)}-\mu) (f(y_n)-f^\ast) \nonumber\\
& -\frac{\mu}{2}\beta(1-\beta)\left(\frac{\alpha}{1-\alpha}\right)^2\|y_n-x_n\|^2 -\frac{\alpha\beta\eta\mu}{1-\alpha}(f(y_n)-f(x_n))- \mu\eta (f(y_n)-f^\ast) \nonumber\\
&- \frac{\mu^2 \eta}{2}\|y_n-x^*\|^2.\nonumber
\end{align}
Collecting terms and removing the $\|y_n-x_n\|^2$ term, we get
  \begin{align}
\mathbb{E}_n\left[E_{n+1} \right]  &\leq \beta E_n + \left(1 - \frac{\mu}{s}\eta^2 \right) \mathbb{E}_n\left[ f(x_{n+1})  - f^* \right] + \beta\left( \frac{\alpha \eta \mu}{1-\alpha} -1\right)\left(f(x_n)-f^*\right) \\
&+ \frac{\mu}{2}(1-\beta - \mu \eta)\|y_n-x^*\|^2  +\mu \eta \left( \frac{\eta}{s} - \frac{\alpha \beta}{1-\alpha} - 1 + 2\eta(L_{(K)}-\mu) \right)(f(y_n)-f^\ast) \nonumber
\end{align}
We fix $s = \mu \eta^2$ and $\frac{\alpha}{1-\alpha} = \frac{1}{\eta \mu}$, which cancels $ \mathbb{E}_n\left[ f(x_{n+1})  - f^* \right]$ and the $f(x_n)-f^*$ terms. With these choices, we want
\begin{align}
     &\frac{\eta}{s} - \frac{\alpha \beta}{1-\alpha} - 1 + 2\eta (L_{(K)}-\mu) = \frac{1}{\mu \eta} - \frac{\beta}{\mu \eta} - 1 + 2\eta(L_{(K)}-\mu) = 0\\
     \Rightarrow& 1-\beta = \mu \eta(1-2\eta (L_{(K)}-\mu)) 
\end{align}
We want to maximize the right quantity with respect to $\eta$. It is maximized for $\eta = \frac{1}{4 (L_{(K)}-\mu)}$, and in this case:
\begin{equation}
    \beta = 1 - \mu \eta(1-2\eta(L_{(K)}-\mu)) = 1 - \frac{1}{4}\frac{\mu}{(L_{(K)}-\mu)}(1-\frac{1}{2}) = 1-\frac{1}{8}\frac{\mu}{(L_{(K)}-\mu)}
\end{equation}
Also with that choice of $\eta$ and $\beta$, we have $1-\beta - \mu \eta = \frac{1}{8}\frac{\mu}{(L_{(K)}-\mu)} - \frac{1}{4}\frac{\mu}{(L_{(K)}-\mu)} = -\frac{1}{8}\frac{\mu}{(L_{(K)}-\mu)} < 0$, which ensures that we can control the $\|y_n-x^*\|^2 $ term. Thus, with the choice of parameters
\begin{equation}
    s = \frac{1}{16}\frac{\mu}{(L_{(K)}-\mu)^2}, \quad \beta = 1-\frac{1}{8}\frac{\mu}{(L_{(K)}-\mu)}, \quad \eta = \frac{1}{4}\frac{1}{(L_{(K)}-\mu)}, \quad \alpha = \frac{1}{1+\frac{1}{4}\frac{\mu}{(L_{(K)}-\mu)}}, 
\end{equation}
we have
\begin{align}
    \mathbb{E}_n\left[E_{n+1} \right]  &\leq \bpar{1-\frac{1}{8}\frac{\mu}{(L_{(K)}-\mu)}} E_n - \frac{1}{16}\frac{\mu^2}{(L_{(K)}-\mu)} \|y_n-x^*\|^2 
\end{align}
Ignoring norm term, taking expectation and by induction
\begin{align}
    \mathbb{E}\left[ E_n \right] \leq \bpar{1-\frac{1}{8}\frac{\mu}{(L_{(K)}-\mu)}}^n\mathbb{E}\left[E_0\right]
\end{align}
Finally, we just bound $E_0$, by strong convexity
\begin{align}
    E_0 = f(x_0)-f^ \ast + \frac{\mu}{2}\lVert x_0 - x^\ast \rVert^2 \leq 2(f(x_0)-f^\ast),
\end{align}
as we assumed $x_0 = z_0$. Thus, we get
\begin{equation}
    \mathbb{E}[f(x_n)-f^\ast)] \leq 2\left(1 - \frac{1}{8}\frac{\mu}{(L_{(K)}-\mu)} \right)^n\mathbb{E}\left[f(x_0)-f^\ast) \right]
\end{equation}
We obtain a bound in term of $\varepsilon$ solution following the same reasoning as for the proof of Theorem~\ref{thm:cv_snag_exp} in the strongly convex case, and obtain jfa{that} such a solution is achieved if
\begin{equation}
    n \geq 8 \frac{L_{(K)}-\mu}{\mu}\log \bpar{\frac{2(f(x_0)-f^\ast)}{\varepsilon}}
\end{equation}

\subsection{Additional Lemma}\label{app:additional_lemma}
In this section, we prove Lemma~\ref{lem:variance_control}. Then, we introduce and prove Lemma~\ref{lemme variance control}.

First, we recall the statement of Lemma~\ref{lem:variance_control}.
\begin{lemma*}
Assume $f$ is such that assumptions~(\ref{ass:interpolation})-(\ref{ass:l_smooth}) hold.

If $f$ is $\mu$-strongly convex, we have
\begin{align}
    \mathbb{E}\left[ \norm{\nabla f(x) -  \gradsto{x} }^2 \right] \leq 2(L_{(K)}-\mu)(f(x)-f^\ast).
\end{align}
If $f$ is convex, we have
\begin{align}
    \mathbb{E}\left[  \norm{\nabla f(x) -  \gradsto{x} }^2 \right] \leq 2L_{(K)}(f(x)-f^\ast).
\end{align}
\end{lemma*}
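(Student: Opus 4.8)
The plan is to reduce the statement to the already-established variance bound of Lemma~\ref{lem:variance_transfer_sgd}, combined with the unbiasedness of the batch estimator $\gradsto{x}$.

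First I would record that the estimator is unbiased. Since the batch $B$ is drawn uniformly among subsets of size $K$, each index is selected with probability $K/N$, so $\mathbb{E}[\gradsto{x}] = \frac{1}{N}\sum_{i=1}^N \nabla f_i(x) = \nabla f(x)$. Hence the left-hand side is exactly a variance, and the bias--variance identity reads
\begin{equation}
\mathbb{E}\left[\norm{\nabla f(x) - \gradsto{x}}^2\right] = \mathbb{E}\left[\norm{\gradsto{x}}^2\right] - \norm{\nabla f(x)}^2.
\end{equation}
I would then invoke Lemma~\ref{lem:variance_transfer_sgd}, which under Assumptions~\ref{ass:interpolation}--\ref{ass:l_smooth} already furnishes $\mathbb{E}[\norm{\gradsto{x}}^2] \leq 2L_{(K)}(f(x)-f^\ast)$. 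In the convex case the claim is then immediate: the term $-\norm{\nabla f(x)}^2$ is nonpositive and may simply be discarded, leaving $\mathbb{E}[\norm{\nabla f(x) - \gradsto{x}}^2] \leq 2L_{(K)}(f(x)-f^\ast)$.

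For the strongly convex case, instead of discarding $\norm{\nabla f(x)}^2$ I would lower bound it using the Polyak--\L{}ojasiewicz inequality implied by $\mu$-strong convexity: minimizing the lower bound $f(y) \geq f(x) + \dotprod{\nabla f(x), y-x} + \frac{\mu}{2}\norm{y-x}^2$ over $y$ yields $f^\ast \geq f(x) - \frac{1}{2\mu}\norm{\nabla f(x)}^2$, that is $\norm{\nabla f(x)}^2 \geq 2\mu(f(x)-f^\ast)$. Substituting into the identity above gives
\begin{equation}
\mathbb{E}\left[\norm{\nabla f(x) - \gradsto{x}}^2\right] \leq 2L_{(K)}(f(x)-f^\ast) - 2\mu(f(x)-f^\ast) = 2(L_{(K)}-\mu)(f(x)-f^\ast).
\end{equation}

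I expect no serious obstacle: the whole argument hinges on the variance decomposition and on reusing Lemma~\ref{lem:variance_transfer_sgd}. The only point requiring a little care is that the sharpening from $L_{(K)}$ to $L_{(K)}-\mu$ comes exactly from the PL inequality, so I would make sure that $f$ being $\mu$-strongly convex (rather than each $f_i$) suffices to apply it --- which it does, since the PL inequality is a property of the sum $f$ alone and does not require strong convexity of the individual summands.
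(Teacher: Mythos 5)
Your proof is correct and matches the paper's own argument essentially line for line: the same bias--variance identity $\mathbb{E}[\norm{\nabla f(x)-\gradsto{x}}^2] = \mathbb{E}[\norm{\gradsto{x}}^2] - \norm{\nabla f(x)}^2$, the same invocation of Lemma~\ref{lem:variance_transfer_sgd}, the Polyak--\L{}ojasiewicz bound $\norm{\nabla f(x)}^2 \geq 2\mu(f(x)-f^\ast)$ in the strongly convex case, and the trivial bound $\norm{\nabla f(x)}^2 \geq 0$ in the convex case. Your added details (verifying unbiasedness of $\gradsto{x}$ and deriving the PL inequality from strong convexity of the sum $f$, which the paper instead cites) are correct and only make the argument more self-contained.
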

\begin{proof}
We have the elementary relation:
\begin{align}
     \mathbb{E}\left[ \lVert \nabla f(x) -  \gradsto{x} \rVert^2 \right] = \mathbb{E}\left[ \lVert  \gradsto{x} \rVert^2 \right] - \lVert  \nabla f(x) \rVert^2.
\end{align}
If $f$ is $\mu$-strongly convex, it satisfies the Polyak-\L{}ojasiewicz inequality \citep{necoara2019linear}, namely
\begin{equation}\label{eq:pl}\tag{PL}
    \lVert \nabla f(x) \rVert^2 \geq 2\mu(f(x)-f^\ast).
\end{equation}
The convex case can be obtained by letting $\mu \to 0$. In this case, we only have $    \lVert \nabla f(x) \rVert^2 \geq 0$. To conclude, we apply Lemma~\ref{lem:variance_transfer_sgd} and we get the result.
\end{proof}

\begin{lemma}\label{lemme variance control}
Assuming $f$ is $L$-smooth and $s_n \leq \frac{1}{L}$, iterates of SNAG give:
\begin{equation}
    \mathbb{E}_n\left[ \lVert \gradsto{y_n} \rVert^2 \right]\leq \frac{2}{s_n}\mathbb{E}_n\left[\left( f(y_n)-f(x_{n+1}) \right)\right] + 2\mathbb{E}_n\left[\lVert \nabla f(y_n) - \gradsto{y_n} \rVert^2\right].
\end{equation}
\end{lemma}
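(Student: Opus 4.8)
The plan is to apply the descent inequality coming from $L$-smoothness to the SNAG update, and then to exploit the unbiasedness of the batch estimator, $\mathbb{E}_n[\gradsto{y_n}] = \nabla f(y_n)$, to identify the leftover cross term with the variance term $\mathbb{E}_n[\norm{\nabla f(y_n)-\gradsto{y_n}}^2]$ appearing on the right-hand side.

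First I would recall that line 4 of Algorithm~\ref{alg:SNAG} gives $x_{n+1} = y_n - s_n\gradsto{y_n}$, so that $x_{n+1}-y_n = -s_n\gradsto{y_n}$. Writing $g := \gradsto{y_n}$ for brevity and applying the $L$-smoothness descent inequality (the consequence of Definition~\ref{def:lsmooth} stated just below it) at the points $y_n$ and $x_{n+1}$ yields
\begin{equation}
    f(x_{n+1}) \leq f(y_n) - s_n\dotprod{\nabla f(y_n),g} + \frac{L s_n^2}{2}\norm{g}^2 .
\end{equation}
This holds pointwise for each realization of the batch, before any expectation is taken.

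Next I would rearrange to isolate $\norm{g}^2$. Using the elementary identity $\dotprod{\nabla f(y_n),g} = \norm{g}^2 - \dotprod{g-\nabla f(y_n),g}$ and collecting the $\norm{g}^2$ contributions gives
\begin{equation}
    s_n\Bigl(1 - \tfrac{L s_n}{2}\Bigr)\norm{g}^2 \leq f(y_n) - f(x_{n+1}) + s_n\dotprod{g-\nabla f(y_n),g} .
\end{equation}
Since $s_n \leq \frac{1}{L}$ implies $1 - \frac{L s_n}{2} \geq \frac12$ and $\norm{g}^2 \geq 0$, I can lower-bound the left-hand side by $\frac{s_n}{2}\norm{g}^2$ and then divide by $s_n>0$ to reach $\frac12\norm{g}^2 \leq \frac{1}{s_n}\bigl(f(y_n)-f(x_{n+1})\bigr) + \dotprod{g-\nabla f(y_n),g}$.

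The final and key step is to take the conditional expectation $\mathbb{E}_n$, where the cross term simplifies: splitting $\dotprod{g-\nabla f(y_n),g} = \dotprod{g-\nabla f(y_n),\nabla f(y_n)} + \norm{g-\nabla f(y_n)}^2$ and invoking unbiasedness $\mathbb{E}_n[g] = \nabla f(y_n)$, the first summand has zero conditional expectation, leaving $\mathbb{E}_n[\dotprod{g-\nabla f(y_n),g}] = \mathbb{E}_n[\norm{\nabla f(y_n)-g}^2]$. Substituting and multiplying by $2$ produces exactly the claimed bound. I expect the only genuine subtlety to be the bookkeeping with $\mathbb{E}_n$ — specifically, using the unbiasedness of $\gradsto{\cdot}$ (the same property underlying the other lemmas of this appendix) to annihilate the sign-indefinite cross term, instead of splitting it through a cruder Young or Cauchy--Schwarz bound, which would yield suboptimal constants rather than the clean factors $2$ and $\tfrac{2}{s_n}$.
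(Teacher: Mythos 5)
Your proof is correct and follows essentially the same route as the paper's: apply the $L$-smoothness descent inequality to the update $x_{n+1}=y_n-s_n\gradsto{y_n}$, decompose the cross term $\dotprod{\nabla f(y_n),\gradsto{y_n}}$, use unbiasedness of $\gradsto{\cdot}$ under $\mathbb{E}_n$ to annihilate the sign-indefinite part, and invoke $s_n\left(1-\frac{Ls_n}{2}\right)\geq \frac{s_n}{2}$ to obtain the constants $2$ and $\frac{2}{s_n}$. The only differences are cosmetic (you rearrange before taking $\mathbb{E}_n$, the paper after; your sign bookkeeping in the decomposition is in fact cleaner than the paper's).
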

\begin{proof}
Using smoothness, we get
\begin{align}
f(x_{n+1}) &\leq f(y_n) + \langle \nabla f(y_n),x_{n+1}-y_n \rangle + \frac{L}{2}\lVert x_{n+1} - y_n \rVert^2 \\
 f(x_{n+1}) &\leq f(y_n) - s_n\langle \nabla f(y_n),\gradsto{y_n} \rangle + \frac{Ls_n^2}{2}\lVert \gradsto{y_n} \rVert^2. \label{line lemma 2}
\end{align}
We have
\begin{align}
    \langle \nabla f(y_n),\gradsto{y_n} \rangle &= \langle \nabla f(y_n)- \gradsto{y_n} ,\gradsto{y_n} \rangle + \lVert \gradsto{y_n} \rVert^2\\
    &= -\lVert \nabla f(y_n)- \gradsto{y_n} \rVert^2+ \lVert \gradsto{y_n} \rVert^2 - \langle \nabla f(y_n)- \gradsto{y_n},\nabla f(y_n) \rangle. \label{scalar product lemma 2}
\end{align}
Note that taking conditional expectation with respect to $\mathcal{F}_n$, the scalar product in equation~\eqref{scalar product lemma 2} gets cancelled. Inserting (\ref{scalar product lemma 2}) in (\ref{line lemma 2}) and taking conditional expectation, we have
\begin{align}
\mathbb{E}_n\left[ f(x_{n+1}) \right] &\leq \mathbb{E}_n\left[f(y_n)\right] +s_n \mathbb{E}_n\left[\lVert \nabla f(y_n)- \gradsto{y_n} \rVert^2\right]\\
&-s_n \left(1-  \frac{Ls_n}{2} \right) \mathbb{E}_n\left[\lVert \gradsto{y_n} \rVert^2\right]\\
 \mathbb{E}_n\left[\lVert \gradsto{y_n} \rVert^2\right] &\leq \frac{2}{s_n}\mathbb{E}_n\left[ f(y_n) -f(x_{n+1}) \right] + 2\mathbb{E}_n\left[\lVert \nabla f(y_n)- \gradsto{y_n} \rVert^2\right].
\end{align}
Where we used in the second inequality that $s_n \left(1- \frac{Ls_n}{2}\right) \geq \frac{1}{2}$, provided that $s_n \leq \frac{1}{L}$.
\end{proof}

\section{Convergence of SNAG with strong growth condition}\label{app:cv_snag}
In Sections \ref{Appendix convex expectation} and \ref{Appendix strongly convex expectation}, we provide for completeness a proof of Theorem~\ref{thm:cv_snag_exp}, that is a similar result to the one from~\cite{vaswani2019fast}. Our proof is a slightly simpler formulation of the algorithm. 
In Sections \ref{Appendix convex almost sure} and \ref{Appendix strongly convex almost sure}, we extend these results proving new almost sure convergences (Theorem~\ref{thm:almost_sure_cvg_cvx}), that are asymptotically better that the results in expectation.

In this Section~\ref{app:cv_snag}, we denote $\mathcal{F}_n$ the $\sigma$-algebra generated by the $n+1$ first iterates $\{x_i \}_{i=0}^{n}$ generated by SNAG (Algorithm~\ref{alg:SNAG}), \textit{i.e.} $\mathcal{F}_n = \sigma(x_0,\dots, x_n)$. Also, we will note $\mathbb{E}_n[\cdot]$ the conditional expectation with respect to $\mathcal{F}_n$.

First, we present a technical result (Lemma~\ref{descent lemma SGC}) that will be useful in our proofs.

\begin{lemma}\label{descent lemma SGC}
Assume $f$ is $L$-smooth, and that $\tilde \nabla_K$ verifies the \ref{SGC} for $\rho_K\ge 1$. If $y_n$ and $x_{n+1}$ are generated by SNAG (Algorithm~\ref{alg:SNAG}), then with $s = \frac{1}{L \rho_K}$
\begin{equation}
    \lVert \nabla f(y_n) \rVert^2 \leq 2L \rho_K\mathbb{E}_n\left[  f(y_n) - f(x_{n+1}) \right] 
\end{equation}
where $\mathbb{E}_n$ stands for the conditional expectation with respect to $\mathcal{F}_n$.
\end{lemma}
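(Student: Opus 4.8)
The plan is to apply the standard descent inequality coming from $L$-smoothness to the update $x_{n+1} = y_n - s\gradsto{y_n}$, then take the conditional expectation $\mathbb{E}_n[\cdot]$ and feed in two facts: the unbiasedness of the batch estimator $\tilde\nabla_K$, and the \ref{SGC}. The whole argument mirrors the reasoning already used in Lemma~\ref{lemme variance control}, but here the \ref{SGC} lets us bound the stochastic gradient directly by $\norm{\grad{y_n}}^2$ rather than by a function-value gap.

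First I would invoke $L$-smoothness (Definition~\ref{def:lsmooth}) at the point $y_n$, giving
\begin{equation}
f(x_{n+1}) \leq f(y_n) + \dotprod{\grad{y_n},x_{n+1}-y_n} + \frac{L}{2}\norm{x_{n+1}-y_n}^2.
\end{equation}
Since line~4 of Algorithm~\ref{alg:SNAG} gives $x_{n+1}-y_n = -s\gradsto{y_n}$, I would substitute this in and then take the conditional expectation $\mathbb{E}_n$. Because $y_n$ is $\mathcal{F}_n$-measurable, the estimator is unbiased, $\mathbb{E}_n[\gradsto{y_n}] = \grad{y_n}$, so the cross term becomes the deterministic inner product $-s\norm{\grad{y_n}}^2$. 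Applying the \ref{SGC} to the quadratic term, namely $\mathbb{E}_n[\norm{\gradsto{y_n}}^2] \leq \rho_K \norm{\grad{y_n}}^2$, yields
\begin{equation}
\mathbb{E}_n[f(x_{n+1})] \leq f(y_n) - s\left(1 - \frac{L s \rho_K}{2}\right)\norm{\grad{y_n}}^2.
\end{equation}
Finally I would plug in $s = \frac{1}{L\rho_K}$, which makes $L s \rho_K = 1$ and hence $s\left(1-\frac{L s \rho_K}{2}\right) = \frac{1}{2L\rho_K}$; rearranging and using that $f(y_n)$ is $\mathcal{F}_n$-measurable gives exactly $\norm{\grad{y_n}}^2 \leq 2L\rho_K\,\mathbb{E}_n[f(y_n)-f(x_{n+1})]$.

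There is no real obstacle here: the result is a short one-shot estimate. The only points requiring care are the measurability bookkeeping (ensuring $\grad{y_n}$ and $f(y_n)$ pass out of $\mathbb{E}_n$ as constants, so the linear term collapses cleanly to $-s\norm{\grad{y_n}}^2$ and no residual noise term survives in the inner product) and verifying that the unbiasedness $\mathbb{E}_n[\gradsto{y_n}]=\grad{y_n}$ indeed holds for the uniform batch sampling of Equation~(\ref{eq:gradient_estimator}), which follows from $\frac{1}{N}\sum_{i=1}^N \nabla f_i = \nabla f$. No convexity or interpolation assumption is needed, only $L$-smoothness and the \ref{SGC}.
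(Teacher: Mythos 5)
Your proof is correct and follows essentially the same route as the paper's: apply the $L$-smoothness descent inequality to the update $x_{n+1}=y_n-s\gradsto{y_n}$, take conditional expectation using unbiasedness of the batch estimator, bound $\mathbb{E}_n[\lVert\gradsto{y_n}\rVert^2]$ via the \ref{SGC}, and plug in $s=\frac{1}{L\rho_K}$. The only difference is that you make explicit the measurability and unbiasedness bookkeeping that the paper leaves implicit, which is fine.
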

\begin{proof}
By $L$-smoothness, we have
 \begin{align}
     f(x_{n+1}) &\leq f(y_n) + \langle \nabla f(y_n),x_{n+1}-y_n \rangle + \frac{L}{2}\lVert x_{n+1} - y_n \rVert^2\\
     &= f(y_n) - s \langle \nabla f(y_n),\gradsto{y_n} \rangle + \frac{L s^2}{2} \lVert \gradsto{y_n} \rVert^2.
 \end{align}
By taking conditional expectation,
 \begin{equation}
     \mathbb{E}_n\left[ f(x_{n+1}) - f(y_n) \right] \leq -s \lVert \nabla f(y_n) \rVert^2 + \frac{Ls^2}{2}\mathbb{E}_n\left[ \lVert \gradsto{y_n} \rVert^2 \right].
 \end{equation} 
 Then, by strong growth condition (SGC), we have
 \begin{align}
     \mathbb{E}_n\left[ f(x_{n+1}) - f(y_n) \right] \leq s\left(\frac{L \rho_K}{2}s - 1 \right) \lVert  \nabla f(y_n) \rVert^2.
 \end{align}
 To maximize the decrease, we choose $s = \frac{1}{L\rho_K}$, leading to
 \begin{equation}
          \mathbb{E}_n\left[ f(x_{n+1}) - f(y_n) \right] \leq -\frac{1}{2L\rho_K} \lVert   \nabla f(y_n) \rVert^2.
 \end{equation}
\end{proof}

\subsection{Convex-Expectation}\label{Appendix convex expectation}
In this section, we prove statement (\ref{thm:cv_snag_exp convex}) of Theorem~\ref{thm:cv_snag_exp} which is the convergence rate of SNAG algorithm for a convex function.

We first recall the SNAG algorithm (Algorithm~\ref{alg:SNAG}), with a fixed step-size $s$ and $\beta = 1$
\begin{equation}\label{eq:SNAG_convex}\tag{SNAG}
  \left\{
\begin{array}{ll}

    y_n = \alpha_nx_n + (1- \alpha_n)z_n\\
    x_{n+1} = y_n - s \gradsto{y_n}\\
    z_{n+1} = z_n  - \eta_n \gradsto{y_n}
    \end{array}
\right.
\end{equation}
with $\gradsto{\cdot}$ defined in (\ref{eq:gradient_estimator}).
 \begin{align}
    \frac{1}{2}\lVert z_{n+1} - x^\ast \rVert^2 &= \frac{1}{2}\lVert z_n - x^\ast \rVert^2 + \frac{\eta_n^2}{2}\lVert \gradsto{y_n} \rVert^2 + \eta_n\langle x^\ast - z_n,\gradsto{y_n} \rangle \\
    &=\frac{1}{2} \lVert z_n - x^\ast \rVert^2 +\frac{\eta_n^2}{2}\lVert \gradsto{y_n} \rVert^2 + \eta_n\langle x^\ast - y_n ,\gradsto{y_n} \rangle \\
    &+ \eta_n\frac{\alpha_n}{1-\alpha_n}\langle x_n - y_n ,\gradsto{y_n} \rangle.
\end{align}
After taking conditional expectation with respect to $\mathcal{F}_n$, by the convexity of $f$, \ref{SGC} and Lemma~\ref{descent lemma SGC}, we have
 \begin{align}
   \frac{1}{2} \mathbb{E}_n[  \lVert z_{n+1} - x^\ast \rVert^2 ] &\leq \frac{1}{2}\lVert z_n - x^\ast \rVert^2  - \eta_n(f(y_n) - f(x^\ast))+ \eta_n\frac{\alpha_n}{1-\alpha_n}(f(x_n) - f(y_n)) \\
    &+L{\rho_K^2}\eta_n^2\mathbb{E}_n[f(y_n)-f(x_{n+1})].
\end{align}
We can reformulate as
\begin{align}\label{eq:useful_cvg_decomposition}
  \mathbb{E}_n[ L\rho_K^2\eta_n^2\left(f(x_{n+1})-f^\ast\right) + \frac{1}{2} \lVert z_{n+1} - x^\ast \rVert^2] &\leq \eta_n\frac{\alpha_n}{1-\alpha_n}(f(x_n)-f^\ast) + \frac{1}{2}\lVert z_n - x^\ast \rVert^2 \\
   &+\left(L\rho_K^2\eta_n^2 - \eta_n -  \eta_n\frac{\alpha_n}{1-\alpha_n}  \right)(f(y_n)-f^\ast).
\end{align}
We define parameters as
\begin{equation}
    L\rho_K^2\eta_n^2 = \frac{C}{L}(n+1)^2,\quad \eta_n\frac{\alpha_n}{1-\alpha_n} =  \frac{C}{L}n^2,
\end{equation}
with $C \ge 0$.

This parameter setting implies $\eta_n = \frac{\sqrt{C}}{L\rho_K}(n+1)$. Thus, we have
\begin{align}
    L\rho_K^2\eta_n^2 - \eta_n -  \eta_n\frac{\alpha_n}{1-\alpha_n} &= \frac{C}{L}(2n+1) -   \frac{\sqrt{C}}{L\rho_K}(n+1)  \leq 0\\
    \Rightarrow \sqrt{C} \leq \frac{1}{\rho_K}\frac{n+1}{2n+1}.
\end{align}
As we have, for all $n\in \N$, $\frac{1}{2} \leq \frac{n+1}{2n+1} \leq 1$, at best we can set $\sqrt{C} = \frac{1}{2 \rho_K}$. With this choice, we have $C = \frac{1}{4\rho_K^2}$, $\eta_n = \frac{1}{L \rho_K^2}\frac{n+1}{2}$ and
\begin{align}
    \frac{\alpha_n}{1-\alpha_n} = \frac{C}{L}n^2\eta_n^{-1} = \frac{1}{2} \frac{n^2}{n+1}.
\end{align}
This implies that $\alpha_n = \frac{\frac{n^2}{n+1}}{2 + \frac{n^2}{n+1}}$. With this choice of parameter, we have
\begin{align}
      &\mathbb{E}_n\left[\frac{(n+1)^2}{4L\rho_K^2}(f(x_{n+1})-f^\ast) + \frac{1}{2} \lVert z_{n+1} - x^\ast \rVert^2\right] \leq \frac{n^2}{4L\rho_K^2}(f(x_n)-f^\ast) + \frac{1}{2}\lVert z_n - x^\ast \rVert^2.
\end{align} 
Finally, we get the convergence rate
\begin{align}
      \mathbb{E}[f(x_{n+1})-f^\ast] \leq \frac{2L\rho_K^2}{(n+1)^2}\norm{x_0-x^\ast}^2.
\end{align}
To conclude:
\begin{align}
      \frac{2L\rho_K^2}{n^2}\norm{x_0-x^\ast}^2 \leq \varepsilon \Rightarrow
      \sqrt{\frac{2L}{\varepsilon}}\rho_K\norm{x_0-x^\ast} \leq n.
\end{align}

\subsection{Strongly convex - Expectation}\label{Appendix strongly convex expectation}
In this section, we prove statement (\ref{thm:cv_snag_exp strongly convex}) of Theorem \ref{thm:cv_snag_exp}. Let us remind the algorithm
\begin{equation}\label{eq:SNAG_strong_cvx}\tag{SNAG}
  \left\{
\begin{array}{ll}
    y_n = \alpha_nx_n + (1- \alpha_n)z_n\\
    x_{n+1} = y_n - s \gradsto{y_n}\\
    z_{n+1} = \beta z_n + (1-\beta) y_n  - \eta_n \gradsto{y_n}
    \end{array}
\right.
\end{equation}
with $\gradsto{\cdot}$ defined in (\ref{eq:gradient_estimator}).
   We introduce the following Lyapunov energy:
\begin{equation}
    E_n = f(x_n) - f^\ast + \frac{\mu}{2}\lVert z_n - x^\ast \rVert^2
\end{equation}
We compute:
 \begin{align}
    E_{n+1} - E_n &= f(x_{n+1}) - f(x_n) + \frac{\mu}{2}\lVert z_{n+1} - x^\ast \rVert^2 - \frac{\mu}{2}\lVert z_n - x^\ast \rVert^2.
\end{align}
We start considering the right term
\begin{eqnarray*}
\Delta_n &=& \lVert z_{n+1} - x^\ast \rVert^2 - \lVert z_n - x^\ast \rVert^2\\
&=& \lVert \beta z_n + (1 - \beta)y_n - \eta \gradsto{y_n} - x^\ast\rVert^2- \lVert z_n - x^\ast \rVert^2\\
&=& (\beta^2-1)\|z_n-x^*\|^2 +(1-\beta)^2\|y_n-x^*\|^2 +\eta^2\|\gradsto{y_n}\|^2 \\
&& +2 \beta  \langle z_n - x^\ast,(1-\beta)(y_n-x^\ast) - \eta \gradsto{y_n} \rangle - 2(1-\beta)\eta \langle \gradsto{y_n},y_n-x^*\rangle
\end{eqnarray*}
by construction of Algorithm~\ref{alg:SNAG}. We now control the first scalar product: using the definition of Algorithm~\ref{alg:SNAG}, we have $z_n = y_n + \frac{\alpha}{1-\alpha}(y_n -x_n)$, therefore
\begin{eqnarray*}
    &&\langle z_n - x^\ast,(1-\beta)(y_n-x^\ast) - \eta \gradsto{y_n} \rangle\\
   & &= \langle y_n - x^\ast,(1-\beta)(y_n-x^\ast) - \eta \gradsto{y_n} \rangle \\
   & &+ \frac{\alpha}{1-\alpha}\langle y_n -x_n,(1-\beta)(y_n-x^\ast) - \eta \gradsto{y_n} \rangle\\
   & &= (1-\beta)\lVert y_n - x^\ast \rVert^2 - \eta\langle y_n -x^\ast ,\gradsto{y_n} \rangle - \frac{\alpha}{1-\alpha}\eta\langle y_n -x_n,\gradsto{y_n} \rangle\\
   & &+ \frac{\alpha}{1- \alpha}(1-\beta) \langle y_n -x_n, y_n -x^\ast \rangle
\end{eqnarray*}
Now, applying the relation $2\langle a,b\rangle = \|a+b\|^2-\|a\|^2-\|b\|^2$ to $a =y_n-x^*$ and $b=\frac{\alpha}{1-\alpha}(y_n-x_n)$, we get
\begin{align}\label{CV GAP eq 2}
 \frac{\alpha}{1-\alpha}\langle y_n -x_n,y_n-x^\ast \rangle = \frac{1}{2}\lVert z_n - x^\ast \rVert^2 -  \frac{1}{2}\left(\frac{\alpha}{1-\alpha}\right)^2 \lVert y_n -x_n \rVert^2 -  \frac{1}{2}\lVert y_n -x^\ast \rVert^2,
\end{align}
so that
\begin{eqnarray*}
    &&\langle z_n - x^\ast,(1-\beta)(y_n-x^\ast) - \eta \gradsto{y_n} \rangle\\
   & &= \frac{1-\beta}{2}\left(\lVert z_n - x^\ast \rVert^2 + \lVert y_n - x^\ast \rVert^2 - \left(\frac{\alpha}{1-\alpha}\right)^2 \lVert y_n -x_n \rVert^2 \right)- \eta\langle y_n -x^\ast ,\gradsto{y_n} \rangle\\
   & &- \frac{\alpha}{1-\alpha}\eta\langle y_n -x_n,\gradsto{y_n} \rangle
\end{eqnarray*}
and 
\begin{eqnarray*}
\Delta_n &=& -(1-\beta)\|z_n-x^*\|^2 +(1-\beta)\|y_n-x^*\|^2 +\eta^2\|\gradsto{y_n}\|^2 \\
&& -\beta(1-\beta)\left(\frac{\alpha}{1-\alpha}\right)^2\|y_n-x_n\|^2 -2\frac{\alpha\beta\eta}{1-\alpha}\langle \gradsto{y_n},y_n -x_n \rangle\\
&&- 2\eta \langle \gradsto{y_n},y_n-x^*\rangle.
\end{eqnarray*}
Reinjecting $\Delta_n$ in the expression of $E_{n+1}-E_n$ and by definition of $E_n$, we get
\begin{align}
E_{n+1} - E_n &= -(1-\beta) E_n + f(x_{n+1}) - f^* -\beta\left(f(x_n)-f^*\right) + \frac{\mu}{2}(1-\beta)\|y_n-x^*\|^2 \nonumber\\
& +\frac{\mu}{2}\eta^2\|\gradsto{y_n}\|^2-\frac{\mu}{2}\beta(1-\beta)\left(\frac{\alpha}{1-\alpha}\right)^2\|y_n-x_n\|^2\\
&-\frac{\alpha\beta\eta\mu}{1-\alpha}\langle \gradsto{y_n},y_n -x_n \rangle- \mu\eta \langle \gradsto{y_n},y_n-x^*\rangle.\label{proof:step1}
\end{align}

We take the conditional expectation with respect to $\mathcal{F}_n$:
        \begin{align}\label{SC proof BIG EQ}
\mathbb{E}_n\left[E_{n+1} \right]  &= \beta E_n +  \mathbb{E}_n\left[ f(x_{n+1})  - f^* \right] -\beta\left(f(x_n)-f^*\right) + \frac{\mu}{2}(1-\beta)\|y_n-x^*\|^2 \\
&+\frac{\mu}{2}\eta^2 \mathbb{E}_n\left[\| \gradsto{y_n}\|^2 \right]\nonumber -\frac{\mu}{2}\beta(1-\beta)\left(\frac{\alpha}{1-\alpha}\right)^2\|y_n-x_n\|^2 \\
&-\frac{\alpha\beta\eta\mu}{1-\alpha}\langle \mathbb{E}_n\left[ \gradsto{y_n} \right],y_n -x_n \rangle- \mu\eta \langle  \mathbb{E}_n\left[\gradsto{y_n} \right],y_n-x^*\rangle.
\end{align}
 Using strong convexity of $f$, the strong growth condition (SGC), and then Lemma \ref{descent lemma SGC}, we have:
   \begin{align}
\mathbb{E}_n\left[E_{n+1} \right]  &\leq \beta E_n + \mathbb{E}_n\left[ f(x_{n+1})  - f^* \right] -\beta\left(f(x_n)-f^*\right) + \frac{\mu}{2}(1-\beta)\|y_n-x^*\|^2 \nonumber\\
& +\mu L \rho_K^2\eta^2 \mathbb{E}_n\left[f(y_n)-f(x_{n+1})\right]-\frac{\mu}{2}\beta(1-\beta)\left(\frac{\alpha}{1-\alpha}\right)^2\|y_n-x_n\|^2  \\
&-\frac{\alpha\beta\eta\mu}{1-\alpha}(f(y_n)-f(x_n))- \mu\eta (f(y_n)-f^\ast) - \frac{\mu^2 \eta}{2}\|y_n-x^*\|^2.\\
 &\leq \beta E_n + \left(1 -\mu L \rho_K^2 \eta^2 \right) \mathbb{E}_n\left[ f(x_{n+1})  -f^* \right] + \beta\left( \frac{\alpha \eta \mu}{1-\alpha} -1\right)\left(f(x_n)-f^*\right) \nonumber \\
&+ \frac{\mu}{2}(1-\beta - \mu \eta)\|y_n-x^*\|^2  +\mu \eta \left(L\rho_K^2 \eta - \frac{\alpha \beta}{1-\alpha} - 1 \right)(f(y_n)-f^\ast)
\end{align}
We make the following choices: $\eta = \frac{1}{\sqrt{\mu L} \rho_K}$, $\beta = 1-\mu \eta = 1 - \frac{1}{\rho_K}\sqrt{\frac{\mu}{L}}$ and $\frac{\alpha}{1-\alpha} = \frac{1}{\mu \eta} = \rho_K\sqrt{\frac{L}{\mu}}\Rightarrow \alpha = \frac{1}{1+ \frac{1}{\rho_K}\sqrt{\frac{\mu}{L}}}$. As we have:
\begin{equation}
    L\rho_K^2 \eta - \frac{\alpha \beta}{1-\alpha} - 1 = \rho_K\sqrt{\frac{L}{\mu}} -\rho_K\sqrt{\frac{L}{\mu}}(1-\frac{1}{\rho_K}\sqrt{\frac{\mu}{L}}) -1 = 0,
\end{equation} these choices cancel all the terms. Thus we have:
\begin{equation}\label{eq:expectation_ineq}
    \mathbb{E}_n\left[E_{n+1} \right]  \leq\left(1 - \frac{1}{\rho_K}\sqrt{\frac{\mu}{L}}\right) E_n \Rightarrow \mathbb{E}\left[E_{n+1} \right] \leq\left(1 - \frac{1}{\rho_K}\sqrt{\frac{\mu}{L}}\right)^{n+1} E_0.
\end{equation}
Now, note that $E_0 = f(x_0) - f^\ast + \frac{\mu}{2}\lVert x_0 - x^\ast \rVert^2 \leq 2(f(x_0)-f^\ast)$, because $f$ is $\mu$-strongly convex. We deduce the following convergence rate:
\begin{equation}\label{eq:proof_linear_cv_rate_final}
    \mathbb{E}[f(x_{n}) - f^\ast] \leq 2\left(1-\frac{1}{\rho_K}\sqrt{\frac{\mu}{L}}\right)^n(f(x_0)-f^{\ast}).
\end{equation}
A sufficient condition on the number $n$ of iterations needed to achieve a precision $\varepsilon$ is then naturally given by
\begin{equation}
    2\left(1-\frac{1}{\rho_K}\sqrt{\frac{\mu}{L}}\right)^n(f(x_0)-f^{\ast}) \leq \varepsilon.
\end{equation}
By taking the log, we get
\begin{equation}
    n \log\left(1-\frac{1}{\rho_K}\sqrt{\frac{\mu}{L}}\right) + \log\left(\frac{2(f(x_0)-f^{\ast})}{\varepsilon}\right) \le 0.
\end{equation}
Or equivalently
\begin{equation}
    n \geq \left|\log\left(1-\frac{1}{\rho_K}\sqrt{\frac{\mu}{L}}\right)\right|^{-1}\log\left(\frac{2(f(x_0)-f^{\ast})}{\varepsilon}\right).
\end{equation}
Using the inequality: $\left|\log\left( 1-x \right)\right| \geq x$ for any $x\in (0,1)$, we observe that
\begin{equation}
\left|\log\left(1-\frac{1}{\rho_K}\sqrt{\frac{\mu}{L}}\right)\right|^{-1}\log\left(\frac{2(f(x_0)-f^{\ast})}{\varepsilon}\right) \leq \rho_K \sqrt{\frac{L}{\mu}}\log\left(\frac{2(f(x_0)-f^\ast)}{\varepsilon} \right)
\end{equation}
Hence a sufficient condition on the number of iterations to reach a given precision $\varepsilon$ is
\begin{eqnarray}
    &n \geq \rho_K \sqrt{\frac{L}{\mu}}\log\left(\frac{2(f(x_0)-f^\ast)}{\varepsilon} \right) \nonumber\\
    &\Rightarrow n\geq \left|\log\left(1-\frac{1}{\rho_K}\sqrt{\frac{\mu}{L}}\right)\right|^{-1}\log\left(\frac{2(f(x_0)-f^{\ast})}{\varepsilon}\right) \nonumber\\
    &\Rightarrow \mathbb{E}[f(x_n)-f^\ast] \leq \varepsilon
\end{eqnarray}



\subsection{Convex - almost sure}\label{Appendix convex almost sure}
In this section we extend statement (\ref{thm:cv_snag_exp convex}) of Theorem \ref{thm:cv_snag_exp} to get a new almost sure convergence rate.
\begin{proposition}\label{prop:almost_sure_cvg_cvx}
Assume $f$ is $L$-smooth, convex, and that $\tilde \nabla_K$ verifies the \ref{SGC} for $\rho_K\ge 1$. Then SNAG (Algorithm~\ref{alg:SNAG}) with parameter setting $s = \frac{1}{\rho_KL},~\beta=1,~ \alpha_n =\frac{\frac{n^2}{n+1}}{4 + \frac{n^2}{n+1}},~~ \eta_n = \frac{1}{4}\frac{n^2}{n+1}$ generates a sequence $\{ x_n\}_{n \in \mathbb{N}}$ such that
 \begin{equation}
     f(x_n)-f^\ast \overset{a.s.}{=} o\left( \frac{1}{n^2} \right).
 \end{equation}
\end{proposition}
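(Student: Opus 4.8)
The plan is to upgrade the in-expectation analysis of Appendix~\ref{Appendix convex expectation} into an almost sure statement by combining a Lyapunov supermartingale with the Robbins--Siegmund convergence theorem, and then to sharpen the resulting $\bigO(1/n^2)$ bound into an $o(1/n^2)$ one. First I would reuse the exact one-step computation that produced inequality~(\ref{eq:useful_cvg_decomposition}), but keep track of the slack instead of forcing it to vanish. Introducing the energy $E_n := a_n\,(f(x_n)-f^\ast) + \tfrac12\norm{z_n-x^\ast}^2$ with $a_n \asymp n^2$, the role of the parameter choice in the statement (the factor $4$ in $\alpha_n$ and $\eta_n$, rather than the $2$ used in the expectation proof) is precisely to make the coefficient multiplying $(f(y_n)-f^\ast)$ in~(\ref{eq:useful_cvg_decomposition}) \emph{strictly} negative, of order $n$. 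Using convexity, the \ref{SGC}, and Lemma~\ref{descent lemma SGC} exactly as before, this yields a conditional-expectation inequality of the form $\mathbb{E}_n[E_{n+1}] \le E_n - \zeta_n$, where $\zeta_n \ge 0$ and $\zeta_n \asymp n\,(f(y_n)-f^\ast)$.

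Second, I would invoke the Robbins--Siegmund supermartingale theorem applied to the nonnegative adapted sequence $(E_n)$: since there is no positive error term (in the convex case the \ref{SGC} exactly cancels the noise contribution), the theorem gives simultaneously that $E_n$ converges almost surely to a finite limit $E_\infty \ge 0$ and that $\sum_n \zeta_n < \infty$ almost surely. Boundedness of $E_n$ already forces $a_n(f(x_n)-f^\ast)$ to be bounded a.s., recovering the $\bigO(1/n^2)$ rate; the summability $\sum_n n\,(f(y_n)-f^\ast) < \infty$ is the extra information that drives the improvement.

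Finally, to pass from $\bigO$ to $o$, I would use the elementary principle that a nonnegative sequence which both converges and is summable after division by $n$ must converge to $0$ (otherwise $\sum_n 1/n = \infty$ forces divergence of the rescaled series). Concretely, writing $w_n := a_n(f(x_n)-f^\ast)$, the summability from the previous step reads $\sum_n w_n/n < \infty$ up to the descent relation between $y_n$ and $x_n$, so it suffices to know that $w_n$ itself converges; combined, this gives $w_n \to 0$, i.e. $f(x_n)-f^\ast \overset{a.s.}{=} o(1/n^2)$.

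I expect the main obstacle to be establishing almost sure convergence of $w_n = a_n(f(x_n)-f^\ast)$ itself, rather than merely of the full energy $E_n$: the Robbins--Siegmund conclusion only controls the sum $E_n = w_n + \tfrac12\norm{z_n-x^\ast}^2$, so one must separately argue that $\norm{z_n-x^\ast}^2$ converges a.s.\ (for instance via a second supermartingale estimate on the $z_n$ increments) and then carefully transfer the summable quantity from the extrapolated points $y_n$ to the iterates $x_n$ using Lemma~\ref{descent lemma SGC}. The bookkeeping of these two coupled sequences, rather than any single inequality, is where the delicacy lies.
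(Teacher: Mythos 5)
Your opening and closing moves are exactly the paper's: the paper also starts from inequality~(\ref{eq:useful_cvg_decomposition}) with the de-tuned parameters (factor $4$ rather than $2$) so that the coefficient of $f(y_n)-f^\ast$ is at most $-\eta_n/2$, applies Robbins--Siegmund (Theorem~\ref{Robbins siegmund}) with zero error term to $V_n=\frac{n^2}{16L\rho_K^2}(f(x_n)-f^\ast)+\frac12\norm{z_n-x^\ast}^2$ to get a.s.\ convergence of $V_n$ and a.s.\ summability of $\sum_n(n+1)(f(y_n)-f^\ast)$, and it finishes with the same elementary principle you state (a nonnegative sequence that converges and is summable after division by $n$ must vanish in the limit). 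The gap is in the middle step, and it is the one you yourself flag as the obstacle: your proposed route to a.s.\ convergence of $w_n=n^2(f(x_n)-f^\ast)$, namely a ``second supermartingale estimate on the $z_n$ increments'' yielding a.s.\ convergence of $\norm{z_n-x^\ast}^2$, does not go through. With $\beta=1$ we have $z_{n+1}=z_n-\eta_n\tilde\nabla_K(y_n)$; expanding $\frac12\norm{z_{n+1}-x^\ast}^2$, bounding the scalar products by convexity and the noise by \ref{SGC} together with Lemma~\ref{descent lemma SGC}, the unavoidable positive drift term is $\eta_n\frac{\alpha_n}{1-\alpha_n}(f(x_n)-f^\ast)=\frac{n^2}{16L\rho_K^2}(f(x_n)-f^\ast)\asymp w_n$. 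Robbins--Siegmund applied to $\norm{z_n-x^\ast}^2$ alone would therefore require $\sum_n w_n<\infty$ a.s., which is not available: step one gives summability of $n(f(y_n)-f^\ast)$, hence at best of $w_n/n$ after transfer, and that transfer itself is not free (Lemma~\ref{descent lemma SGC} is a conditional-expectation inequality, so moving a.s.\ summability from the $y_n$ to the $x_n$ needs a conditional Borel--Cantelli-type argument, not just the descent relation). The only way to neutralize the $w_n$-sized drift is to absorb it back into the Lyapunov function, which returns you to $V_n$ and to convergence of the \emph{sum} only, never of its two pieces separately.

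The paper escapes this circularity by never attempting to decouple $\norm{z_n-x^\ast}^2$ from $w_n$. Instead it runs a second Robbins--Siegmund application on a different energy, $\tilde V_n=\lambda_n^2\norm{x_n-y_n}^2+\frac{n^2}{8L\rho_K^2}(f(x_n)-f^\ast)$ with $\lambda_n=\frac{1}{1-\alpha_n}$, built from the recursion the algorithm itself satisfies for $x_n-y_n$. Two features make this work: the positive error $C_n$ in that recursion is of order $n(f(y_n)-f^\ast)$, hence a.s.\ summable by step one; and the negative part $\tilde B_n=\frac{n^2}{2L\rho_K(n+1)}(f(x_n)-f^\ast)+(2\lambda_n-1)\norm{x_n-y_n}^2$ both performs the $y_n\to x_n$ transfer internally (so no separate Borel--Cantelli step is needed) and satisfies $\lambda_n\tilde B_n\ge\tilde V_n$. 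Since $\lambda_n\asymp n$, summability of $\tilde B_n$ plus a.s.\ convergence of $\tilde V_n$ forces $\tilde V_\infty=0$ by precisely your elementary principle, giving $w_n\to 0$ a.s. So the skeleton of your plan is right, but the missing idea is the choice of the second Lyapunov function: it must be anchored on $\norm{x_n-y_n}^2$, whose recursion has summable errors, rather than on $\norm{z_n-x^\ast}^2$, whose recursion does not.
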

This result is asymptotically better than the result in expectation of Theorem \ref{thm:cv_snag_exp}. This asymptotic speedup happens similarly considering the deterministic version of the algorithm \citep{attouch2016rate}.\\
Following the scheme of the proof of Theorem 3.1 in \cite{sebbouh2021almost}, the Theorem~\ref{Robbins siegmund} is the key result of our proof. \begin{theorem}[\cite{RobbinsSiegmund}]\label{Robbins siegmund}
        Let $V_n$, $A_n$, $B_n$ and $\alpha_n$ be positive sequences, adapted to some filtration $\mathcal{F}_n$. Assume the following inequality is verified for all $n \in \mathbb{N}$ :
        \begin{equation}
            \mathbb{E}[V_{n+1}\vert~  \mathcal{F}_n] \leq V_n(1+\alpha_n) + A_n - B_n
        \end{equation}
        Then, on the set $\{ \sum_{i\geq 0} \alpha_i < +\infty, \sum_{i\geq 0} A_i < +\infty \}$, $V_n$ converges almost surely to a random variable $V_{\infty}$, and we also have $\sum_{i\geq 0} B_i < + \infty$.
    \end{theorem}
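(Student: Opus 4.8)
The plan is to remove the multiplicative perturbation $1+\alpha_n$ by a discount-type rescaling, reducing the hypothesis to a genuine supermartingale inequality, and then to obtain almost sure convergence through a stopping-time localization together with the classical convergence theorem for nonnegative supermartingales. Since $V_n,A_n,B_n,\alpha_n$ are nonnegative and $\mathcal{F}_n$-adapted, I first introduce the discount factor $\gamma_n = \prod_{k=0}^{n-1}(1+\alpha_k)^{-1}$, which is $\mathcal{F}_{n-1}$-measurable, nonincreasing, and valued in $(0,1]$. Writing $V'_n = \gamma_n V_n$, $A'_n = \gamma_{n+1}A_n$ and $B'_n = \gamma_{n+1}B_n$ and using the identity $\gamma_{n+1}(1+\alpha_n)=\gamma_n$, the hypothesis turns into
\[
  \mathbb{E}\left[V'_{n+1}\mid\mathcal{F}_n\right] \leq V'_n + A'_n - B'_n ,
\]
so that the multiplicative factor has disappeared.

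Next I would build an auxiliary supermartingale. Setting $Z_n = V'_n - \sum_{k=0}^{n-1}A'_k + \sum_{k=0}^{n-1}B'_k$, a direct computation using the previous inequality gives $\mathbb{E}[Z_{n+1}\mid\mathcal{F}_n] \leq Z_n$, so $Z_n$ is a supermartingale. The difficulty, and the main obstacle of the proof, is that $Z_n$ need not be bounded below on all of $\Omega$: the term $-\sum_k A'_k$ may diverge precisely off the event $\{\sum_k A'_k<\infty\}$ on which we want convergence, so the supermartingale convergence theorem cannot be applied globally.

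To localize, I would fix an integer $a$ and introduce the stopping time $\tau_a = \inf\{n : \sum_{k=0}^{n}A'_k > a\}$. On $\{n\le\tau_a\}$ the accumulated $A'$-mass is at most $a$, hence the stopped process satisfies $Z_{n\wedge\tau_a} + a \geq V'_{n\wedge\tau_a} + \sum_{k=0}^{n\wedge\tau_a-1}B'_k \geq 0$; being a nonnegative supermartingale, it converges almost surely to a finite limit. Because $\tau_a=+\infty$ on $\{\sum_k A'_k\le a\}$, the process $Z_n$ itself converges there, and letting $a\to\infty$ yields almost sure convergence of $Z_n$ on $\{\sum_k A'_k<\infty\}$. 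On this event $\sum_{k<n}A'_k$ converges, so $V'_n+\sum_{k<n}B'_k = Z_n+\sum_{k<n}A'_k$ converges; as the first summand is nonnegative and the second nondecreasing, I conclude that $\sum_k B'_k<\infty$ and that $V'_n$ converges almost surely.

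It then remains to transfer the conclusion back to the original sequences. On $\{\sum_k\alpha_k<\infty\}$ the product $\prod_k(1+\alpha_k)$ converges, so $\gamma_n\to\gamma_\infty\in(0,1]$ with $\gamma_\infty>0$. Since $\gamma_{k+1}\le 1$ we have $A'_k\le A_k$, whence $\{\sum_k A_k<\infty\}\subseteq\{\sum_k A'_k<\infty\}$; thus on $\{\sum_k\alpha_k<\infty,\ \sum_k A_k<\infty\}$ the sequence $V_n=V'_n/\gamma_n$ converges almost surely to a finite limit $V_\infty$. Finally, from $B'_k=\gamma_{k+1}B_k\ge\gamma_\infty B_k$ we get $\sum_k B_k\le\gamma_\infty^{-1}\sum_k B'_k<\infty$ almost surely on this event, which is the desired statement.
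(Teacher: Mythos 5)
This statement is not proved in the paper at all: it is the classical Robbins--Siegmund theorem, which the paper imports by citation (Theorem~\ref{Robbins siegmund} is attributed to \cite{RobbinsSiegmund}) and then uses as a black box in the almost-sure convergence arguments. Your proof is correct, and it is in substance the original Robbins--Siegmund argument: discounting by $\gamma_n=\prod_{k<n}(1+\alpha_k)^{-1}$ to kill the multiplicative term, forming the supermartingale $Z_n = V'_n - \sum_{k<n}A'_k + \sum_{k<n}B'_k$, localizing with the stopping times $\tau_a$ so that the stopped process is bounded below by $-a$, invoking the convergence theorem for nonnegative supermartingales, and transferring back via $\gamma_\infty>0$ on $\{\sum_k\alpha_k<\infty\}$ together with $A'_k\le A_k$ and $B'_k\ge\gamma_\infty B_k$. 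All the individual steps check out: $\gamma_{n+1}$ is $\mathcal{F}_n$-measurable so it can be pulled out of the conditional expectation, $\tau_a$ is a stopping time because the partial sums of $A'_k$ are adapted, and on $\{\sum_k A'_k\le a\}$ one indeed has $\tau_a=+\infty$, so the union over integer $a$ gives convergence of $Z_n$ on $\{\sum_k A'_k<\infty\}$.

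The one point you pass over silently is integrability. The theorem assumes nothing about $\mathbb{E}[V_0]$, so $Z_{n\wedge\tau_a}+a$ need not be integrable, and the nonnegative-supermartingale convergence theorem cannot be applied verbatim. This is repaired by the standard device of intersecting with the $\mathcal{F}_0$-measurable events $\{V_0\le M\}$: multiplying the stopped process by $\mathbf{1}_{\{V_0\le M\}}$ yields a genuine integrable nonnegative supermartingale, one gets almost sure convergence on $\{V_0\le M\}$, and letting $M\to\infty$ covers the whole space. This is a one-line patch in the same spirit as the localization in $a$ that you already perform, so it is a technical omission rather than a structural flaw.
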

    Note that the choice of parameters stated in Proposition \ref{prop:almost_sure_cvg_cvx} are less agressive (multiplied by a factor $\frac{1}{2}$) compared to the results in expectation (Theorem \ref{thm:cv_snag_exp convex}).
\begin{proof}
We start back from Equation~(\ref{eq:useful_cvg_decomposition}) that we recall
\begin{align}
  \mathbb{E}_n[ L\rho_K^2\eta_n^2(f(x_{n+1})-f^\ast) + \frac{1}{2} \lVert z_{n+1} - x^\ast \rVert^2] &\leq \eta_n\frac{\alpha_n}{1-\alpha_n}(f(x_n)-f^\ast) + \frac{1}{2}\lVert z_n - x^\ast \rVert^2 \\
   &+\left(L\rho_K^2\eta_n^2 - \eta_n -  \eta_n\frac{\alpha_n}{1-\alpha_n}  \right)(f(y_n)-f^\ast).
\end{align}
We set
\begin{equation}
    L\rho_K^2\eta_n^2 = \frac{C}{L}(n+1)^2,\quad \eta_n\frac{\alpha_n}{1-\alpha_n} =  \frac{C}{L}n^2,
\end{equation}
with $C \ge 0$.

In this proof, compared to the one of Theorem~\ref{thm:cv_snag_exp}, we do not want to cancel the last term but to exploit it. More precisely, we are looking forward to the following inequality
\begin{equation}
    L\rho_K^2\eta_n^2 - \eta_n -  \eta_n\frac{\alpha_n}{1-\alpha_n} \le -\frac{\eta_n}{2} 
\end{equation}
Hence:
\begin{eqnarray}
     \frac{C}{L}(2n+1) \le \frac{\eta_n}{2} = \frac{1}{2}\frac{\sqrt{C}}{L\rho_K}(n+1)&\Leftrightarrow&
     \sqrt{C} \le \frac{1}{2 \rho_K}\frac{n+1}{2n+1} \\
  &\Leftrightarrow&  C \le \frac{1}{16\rho_K^2} \left(\frac{n+1}{n+\frac{1}{2}}\right)^2.
\end{eqnarray}
With the choice $C = \frac{1}{16 \rho_K^2}$, the last inequality is verified. Then, we get the following parameters
\begin{equation}\label{eq:parameter_setting_cvx_as}
    \eta_n = \frac{1}{\rho_K^2 L}\frac{n+1}{4}, ~\frac{\alpha_n}{1-\alpha_n} = \frac{1}{4}\frac{n^2}{n+1}.
\end{equation}
The latter induces $\alpha_n =\frac{\frac{n^2}{n+1}}{4 + \frac{n^2}{n+1}} $. Thus, we have
\begin{align*}
  \mathbb{E}_n\left[\frac{(n+1)^2}{16 L \rho_K^2}(f(x_{n+1})-f^\ast) + \frac{1}{2} \lVert z_{n+1} - x^\ast \rVert^2 \right] &\leq \frac{n^2}{16 L \rho_K^2}(f(x_n)-f^\ast) + \frac{1}{2}\lVert z_n - x^\ast \rVert^2\\
 &- \frac{\eta_n}{2} (f(y_n)-f^\ast).
\end{align*}

We can now apply Theorem~\ref{Robbins siegmund} with 
\begin{align*}
    V_n &:= \frac{n^2}{16L\rho_K^2} (f(x_n)-f^\ast) + \frac{1}{2}\lVert z_n - x^\ast \rVert^2, \\
    A_n &:= 0, \\
    B_n &:= \frac{\eta_n}{2} (f(y_n)-f^\ast), \\
    \alpha_n &:= 0.
\end{align*}
So we have almost surely
\begin{equation}
    \sum_{n \geq 0} B_n = \frac{1}{8 \rho_K^2 L} \sum_{n \geq 0} (n+1)(f(y_n)-f^\ast) < +\infty,
\end{equation}
which implies that
\begin{equation}\label{eq:cvg_series_nf_n}
    \sum_{n \geq 0} (n+1)(f(y_n)-f^\ast) < +\infty.
\end{equation}

By the definition of Algorithm~\ref{alg:SNAG} ($\beta = 1$ in the convex setting), we have
\begin{align}\label{eq:alg_formula_1}
    x_n - z_n = \alpha_{n-1}(x_{n-1}-z_{n-1}) + (\eta_{n-1}-s) \tilde \nabla_K(y_{n-1}).
\end{align}
Moreover, we have 
\begin{align}\label{eq:alg_formula_2}
    x_n - y_n = (1-\alpha_n) (x_n - z_n).
\end{align}
By combining Equation~(\ref{eq:alg_formula_1}) and Equation~(\ref{eq:alg_formula_2}), we get
\begin{align}
    \lVert x_{n+1} - y_{n+1} \rVert^2 &= (1-\alpha_{n+1})^2 \left(\frac{\alpha_n}{1-\alpha_n} \right)^2\lVert x_{n} - y_{n} \rVert^2 + (1-\alpha_{n+1})^2(\eta_{n}-s)^2 \lVert \gradsto{y_n} \rVert^2 \\&+ 2(1-\alpha_{n+1})^2\frac{\alpha_n}{1-\alpha_n}(\eta_{n}-s)\langle x_{n} - y_{n}, \gradsto{y_n} \rangle.
\end{align}
By taking the expectation with respect to $\mathcal F_n$, and using $(\eta_n - s)^2 \leq \eta_n^2$ for $n$ large enough, we have
\begin{align}
    &\eE_n[\lVert x_{n+1} - y_{n+1} \rVert^2] \leq (1-\alpha_{n+1})^2 \left(\frac{\alpha_n}{1-\alpha_n} \right)^2\lVert x_{n} - y_{n} \rVert^2  \\
    &+ (1-\alpha_{n+1})^2\eta_n^2 \mathbb{E}_n[\lVert \gradsto{y_n} \rVert^2] + 2(1-\alpha_{n+1})^2\frac{\alpha_n}{1-\alpha_n}(\eta_{n}-s) \langle x_{n} - y_{n}, \nabla f(y_n) \rangle.
    \end{align}

Using the convexity of $f$, we have $\langle x_{n} - y_{n}, \nabla f(y_n) \rangle \le f(x_{n}) - f(y_{n})$. Thanks to \ref{SGC} and Lemma~\ref{descent lemma SGC}, we have
\begin{align}
    \mathbb{E}_n[\lVert x_{n+1} - y_{n+1} \rVert^2] &\leq (1-\alpha_{n+1})^2 \left(\frac{\alpha_n}{1-\alpha_n} \right)^2\lVert x_{n} - y_{n} \rVert^2  \\
    &+ 2(1-\alpha_{n+1})^2\eta_n^2 L \rho_K^2\mathbb{E}_n[ f(y_{n})-f(x_{n+1})] \\
    &+ 2(1-\alpha_{n+1})^2\frac{\alpha_n}{1-\alpha_n}(\eta_{n}-s)(f(x_{n}) - f(y_{n})).
\end{align}

We divide the previous inequality by $(1-\alpha_{n+1})^2$
\begin{align}
    \mathbb{E}_n\left[\frac{\lVert x_{n+1} - y_{n+1} \rVert^2}{\left(1-\alpha_{n+1}\right)^2}\right] &\le  \left(\frac{\alpha_n}{1-\alpha_n} \right)^2\lVert x_{n} - y_{n} \rVert^2 + 2\eta_n^2 L \rho_K^2\mathbb{E}_n[f(y_{n})-f(x_{n+1})] \\
    &+  2\frac{\alpha_n}{1-\alpha_n}(\eta_{n}-s)(f(x_{n}) - f(y_{n})).
    \end{align}
Thus
\begin{align}\label{eq:intermediar_eq}
   \mathbb{E}_n\left[\frac{\lVert x_{n+1} - y_{n+1} \rVert^2}{\left(1-\alpha_{n+1}\right)^2}\right] &\le  \left(\frac{\alpha_n}{1-\alpha_n} \right)^2\lVert x_{n} - y_{n} \rVert^2 + 2\eta_n^2 L \rho_K^2\mathbb{E}_n[(f^\ast-f(x_{n+1}))] \\
   &+ 2\frac{\alpha_n}{1-\alpha_n}(\eta_{n}-s)(f (x_{n}) - f^\ast) \\
    &+2\left( \eta_n^2 L \rho_K^2 - \frac{\alpha_n}{1-\alpha_n}(\eta_n - s) \right) ( f(y_{n})-f^\ast).
    \end{align}
By the parameter setting (Equation~(\ref{eq:parameter_setting_cvx_as})) and the step-size $s = \frac{1}{\rho_KL}$, we have
\begin{equation}\label{eq:parameter_asymptotic}
         \eta_n^2 L \rho_K^2 - \frac{\alpha_n}{1-\alpha_n}(\eta_n - s) = \frac{1}{16}\frac{2n +1}{L \rho_K^2} + \frac{1}{L\rho_K}\frac{1}{4}\frac{n^2}{n+1} = 
         \mathcal{O}(n).
\end{equation}
By setting $C_n :=2\left( \eta_n^2 L \rho_K^2 - \frac{\alpha_n}{1-\alpha_n}(\eta_n - s) \right) ( f(y_{n})-f^\ast)$, Equation~(\ref{eq:cvg_series_nf_n}) and Equation~(\ref{eq:parameter_asymptotic}) gives that almost surely
\begin{equation}\label{eq:alm_sure_summable_c_n}
        \sum_n C_n < +\infty.
\end{equation}

By defining $\lambda_n := \frac{1}{1-\alpha_{n}}$ and the parameter setting (Equation~(\ref{eq:parameter_setting_cvx_as})), Equation~\ref{eq:intermediar_eq} can be transformed into
\begin{align}
    &\mathbb{E}_n[\lambda_{n+1}^2\lVert x_{n+1} - y_{n+1} \rVert^2 +\frac{1}{8}\frac{(n +1)^2}{L \rho_K^2}(f(x_{n+1})-f^\ast)] \\ &\le \left(1 - \lambda_n \right)^2\lVert x_{n} - y_{n} \rVert^2 
    +\frac{1}{8}\frac{n^2}{L \rho_K^2} (f(x_{n}) - f^\ast)  -\frac{n^2}{2 L\rho_K(n+1)} (f(x_{n}) - f^\ast) + C_n \\
&\le \lambda_n^2 \lVert x_n - y_n \rVert^2 +\frac{1}{8}\frac{n^2}{L \rho_K^2} (f(x_{n}) - f^\ast)  -\frac{n^2}{2 L\rho_K(n+1)} (f(x_{n}) - f^\ast) + C_n \\
&-(2\lambda_n - 1)\lVert x_{n} - y_{n} \rVert^2.
\end{align}
Recalling $\sum_n C_n < +\infty$, we then use Theorem~\ref{Robbins siegmund} with 
\begin{align*}
    \tilde V_n &:= \lambda_n^2 \lVert x_n - y_n \rVert^2 +\frac{1}{8}\frac{n^2}{L \rho_K^2}(f(x_{n}) - f^\ast), \\
    \tilde A_n &:= C_n, \\
    \tilde B_n &:= \frac{n^2}{2 L\rho_K(n+1)} (f(x_{n}) - f^\ast)  +(2\lambda_n - 1)\lVert x_{n} - y_{n} \rVert^2,\\
    \tilde \alpha_n &:= 0. \\
\end{align*}
Note that $\lambda_n =\frac{1}{1-\alpha_n} = \frac{4 + \frac{n^ 2}{n+1}}{4} \ge 1$, $2\lambda_n -1 \geq \lambda_n \geq 0$ and $\tilde B_n$ is positive. So, we have that $\lim \tilde V_n := \tilde  V_{\infty}$ exists almost surely, and $\sum_n \tilde B_n < +\infty$ almost surely. However, we have
\begin{align}
    \lambda_n \tilde B_n \geq  \frac{\lambda_n n^2}{2 L\rho_K(n+1)} (f(x_{n}) - f^\ast) + \lambda_n^2\norm{x_n-y_n}^2.
\end{align}
Moreover, we can compute by the parameter setting (Equation~\ref{eq:parameter_setting_cvx_as})
\begin{align}
    \frac{\lambda_n n^2}{2 L\rho_K(n+1)} =  \frac{4 + \frac{n^ 2}{n+1}}{4}\frac{1}{L\rho_K}\frac{1}{2}\frac{n^2}{n+1} = \frac{n^2}{8L\rho_K^2} \frac{\rho_K(4+\frac{n^2}{n+1})}{n+1}.
\end{align}
As $\frac{\rho_K(4+\frac{n^2}{n+1})}{n+1} = \rho_K\frac{n^2 + 4n+4}{n^2 + 2n +1}>1$, we have $  \frac{\lambda_n n^2}{2 L\rho_K(n+1)} > \frac{n^2}{8L \rho_K^2}$, and thus
\begin{equation}
     \lambda_n \tilde B_n \geq \tilde V_n.
\end{equation}
We can deduce from the previous inequality
 \begin{align}
    \sum_{n\geq 0} \tilde B_n =  \sum_{n\geq 0} \frac{1}{\lambda_n} \lambda_n \tilde B_n \geq  \sum_{n\geq 0} \frac{1}{\lambda_n} \tilde V_n = 4 \sum_{n\geq 0} \frac{\tilde V_n }{4+\frac{n^2}{n+1}}
\end{align}
As $\sum \tilde B_n < \infty$ almost surely, we have $\sum_{n\geq 0} \frac{\tilde V_n }{4+\frac{n^2}{n+1}} < +\infty$ almost surely, and necessarily $V_{\infty} = 0$ almost surely. Then, almost surely
\begin{equation}
    \frac{1}{8}\frac{n^2}{L \rho_K^2} (f(x_{n}) - f^\ast) \overset{a.s.}{\to} 0.
\end{equation}
Finally, we get the result of Proposition~\ref{prop:almost_sure_cvg_cvx}
\begin{equation}
    f(x_n)-f^\ast \overset{a.s.}{=} o\left( \frac{1}{n^2} \right).
\end{equation}

\end{proof}

\subsection{Strongly convex - almost sure}\label{Appendix strongly convex almost sure}
Similarly to Section \ref{Appendix convex almost sure}, we extend statement (\ref{thm:cv_snag_exp convex}) of Theorem~\ref{thm:cv_snag_exp} to get a new, asymptotically better, almost sure convergence result.
\begin{proposition}\label{prop:strg_cnvx_almost_sure}
Assume $f$ is $L$-smooth, $\mu$-strongly convex, and that $\tilde \nabla_K$ verifies the \ref{SGC} for $\rho_K\ge 1$. Then SNAG (Algorithm~\ref{alg:SNAG}) with parameter setting $\alpha =  \frac{1}{1+ \frac{1}{\rho_K}\sqrt{\frac{\mu}{L}}}$, $s = \frac{1}{\rho_KL}$, $\beta = 1-\frac{1}{\rho_K}\sqrt{\frac{\mu}{L}}$ and $\eta =  \frac{1}{\rho_K\sqrt{\mu L}}$ generates a sequence $(x_n)_{n \in \N}$ such that for all $\varepsilon > 0$, we have
 \begin{align}
     f(x_n)-f^\ast &\overset{a.s.}{=} o\left( (q+\varepsilon)^n \right), \\
     \lVert z_n-x^\ast \rVert^2  &\overset{a.s.}{=} o\left( (q+\varepsilon)^n \right)
 \end{align}
 where $q := 1-\frac{1}{\rho_K}\sqrt{\frac{\mu}{L}}$.
\end{proposition}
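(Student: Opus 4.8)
The plan is to lift the in-expectation contraction already obtained for this exact parameter choice in the strongly convex part of Theorem~\ref{thm:cv_snag_exp} into an almost sure statement, using the Robbins--Siegmund theorem (Theorem~\ref{Robbins siegmund}) in the same spirit as the convex case of Proposition~\ref{prop:almost_sure_cvg_cvx}. Recall the Lyapunov energy $E_n := f(x_n) - f^\ast + \frac{\mu}{2}\lVert z_n - x^\ast \rVert^2 \ge 0$. Since the parameters of Proposition~\ref{prop:strg_cnvx_almost_sure} are identical to those of Theorem~\ref{thm:cv_snag_exp}, Equation~(\ref{eq:expectation_ineq}) already gives the one-step conditional contraction $\mathbb{E}_n[E_{n+1}] \le q\, E_n$ with $q = 1 - \frac{1}{\rho_K}\sqrt{\mu/L} \in [0,1)$, so no new descent computation is needed. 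Moreover $E_n$ dominates both quantities of interest, as $f(x_n)-f^\ast \le E_n$ and $\frac{\mu}{2}\lVert z_n - x^\ast \rVert^2 \le E_n$.

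Fix $\varepsilon > 0$ and set $V_n := (q+\varepsilon)^{-n} E_n \ge 0$, adapted to $\mathcal{F}_n$. Dividing the contraction by $(q+\varepsilon)^{n+1}$ gives
\begin{equation*}
    \mathbb{E}_n[V_{n+1}] \le \frac{q}{q+\varepsilon}\,V_n = V_n - \frac{\varepsilon}{q+\varepsilon}\,V_n .
\end{equation*}
This is exactly the hypothesis of Theorem~\ref{Robbins siegmund}, with (in its notation) $\alpha_n = 0$, $A_n = 0$ and $B_n := \frac{\varepsilon}{q+\varepsilon}\,V_n \ge 0$. As $\sum_n \alpha_n$ and $\sum_n A_n$ converge trivially, I would conclude that $V_n$ converges almost surely and that $\sum_n B_n < +\infty$ almost surely.

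Finally, the summability $\sum_n B_n = \frac{\varepsilon}{q+\varepsilon}\sum_n V_n < +\infty$ forces $V_n \overset{a.s.}{\to} 0$, that is $E_n \overset{a.s.}{=} o\big((q+\varepsilon)^n\big)$; the two domination inequalities then transfer this rate verbatim to $f(x_n)-f^\ast$ and to $\lVert z_n - x^\ast \rVert^2$. Since $\varepsilon > 0$ was arbitrary, both claims of the proposition follow. I do not expect a serious obstacle, as all the analytic effort sits in the expectation proof; the one point deserving care is the quantifier bookkeeping, namely that Robbins--Siegmund produces, for each fixed $\varepsilon$, an almost sure event (possibly depending on $\varepsilon$) on which $V_n \to 0$. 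This is consistent with the definition~(\ref{eq:as_cv}) of almost sure $o(\cdot)$ convergence, which fixes the comparison sequence before quantifying over the tolerance, so establishing the bound separately for each $\varepsilon$ suffices.
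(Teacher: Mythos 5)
Your proof is correct, but it converts the in-expectation analysis into an almost sure statement by a genuinely different probabilistic route than the paper. The paper's proof of Proposition~\ref{prop:strg_cnvx_almost_sure} does not use Robbins--Siegmund for the strongly convex case: it takes the unconditional decay $\mathbb{E}[E_n] \le q^n E_0$ (the right-hand implication of Equation~(\ref{eq:expectation_ineq})), applies Markov's inequality to the events $A_n := \{E_n \ge (q+\varepsilon')^n E_0\}$, sums the resulting geometric bound $\left(\frac{q}{q+\varepsilon'}\right)^n$, and invokes Borel--Cantelli; since this only yields the eventual bound $E_n < (q+\varepsilon')^n E_0$ (a big-$O$ statement), the paper must then widen the rate from $\varepsilon'$ to $2\varepsilon'$ to convert it into a little-$o$ statement, finally setting $\varepsilon = 2\varepsilon'$. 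Your route instead works at the level of the conditional contraction $\mathbb{E}_n[E_{n+1}] \le q\,E_n$ (the left-hand part of the same equation), rescales by $(q+\varepsilon)^{-n}$, and feeds the resulting supermartingale-type inequality into Theorem~\ref{Robbins siegmund} --- exactly the tool the paper reserves for the convex case (Proposition~\ref{prop:almost_sure_cvg_cvx}). What your approach buys: it unifies the convex and strongly convex almost sure arguments under the same lemma, it avoids the $\varepsilon'$-doubling trick because summability of $B_n = \frac{\varepsilon}{q+\varepsilon}V_n$ directly forces $V_n \to 0$, and it in fact yields the stronger conclusion $\sum_n (q+\varepsilon)^{-n}E_n < \infty$ almost surely. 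What the paper's approach buys: it is more elementary (Markov plus Borel--Cantelli, no supermartingale convergence theorem) and needs only the unconditional expectation bound rather than the conditional one-step inequality. Your handling of the quantifiers --- letting the almost sure event depend on $\varepsilon$, which is legitimate because the proposition quantifies over $\varepsilon$ outside the almost sure statement --- matches the paper's own usage, so there is no gap there.
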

\begin{proof}
We use the following Lyapunov function
\begin{equation}
        E_n := f(x_n) - f^\ast + \frac{\mu}{2}\lVert z_n - x^\ast \rVert^2.
    \end{equation}
We set $q := 1-\frac{1}{\rho_K}\sqrt{\frac{\mu}{L}}$.
We fix $\varepsilon' > 0$. By the Markov inequality and Equation~(\ref{eq:expectation_ineq}), we get
\begin{equation}
    \mathbb{P}\left(E_n \geq (q+\varepsilon')^n E_0\right) \leq \frac{\mathbb{E}[E_n]}{(q+\varepsilon')^n E_0} \leq \left( \frac{q}{q+\varepsilon'} \right)^n.
\end{equation}
We sum on $n \ge 0$
\begin{align}
    \sum_{n \geq 0 } \mathbb{P}\left(E_n \geq (q+\varepsilon')^n E_0\right) \leq \sum_{n \geq 0 }\left( \frac{q}{q+\varepsilon'} \right)^n < + \infty.
\end{align}
Setting $A_n := \{E_n \geq (q+\varepsilon')^n E_0 \} $, we have by the Borel Cantelli Lemma that $\mathbb{P} \left(\lim \sup A_n \right) = 0$, which implies $\mathbb{P} \left(\lim \inf A_n^c \right) = 1$, where $A_n^c$ is the complementary of $A_n$. In other words, as $A_n^c := \{E_n < (q+\varepsilon')^n E_0 \} $, then for almost every $\omega \in \Omega$, $\exists N_0(\omega) \in \mathbb{N}$ such that for all $n \geq N_0(\omega)$, we have
\begin{equation}
    E_n(\omega) < (q+\varepsilon')^n E_0.
\end{equation}
Thus, we have
\begin{align}
    \frac{E_n(\omega)}{(q+2\varepsilon')^n} < \left( \frac{q+\varepsilon'}{q+2\varepsilon'} \right)^n E_0 \underset{n \to +\infty}{\to} 0
\end{align}
The right term is independent of $\omega$, so almost surely, we have
\begin{align*}
    E_n = o\left( (q+2\varepsilon')^n \right)
\end{align*}

Now fix $\varepsilon = 2\varepsilon'$ and we get

\begin{equation}
     E_n = o\left( (q+\varepsilon)^n \right),
\end{equation}
and thus the result by definition of $E_n$.
\end{proof}

\section{Proofs of Section~\ref{sec:racoga} }\label{app:proof_racoga}
In this section, we will prove our results that establish a link between the strong growth condition \ref{SGC} and the average correlation between gradients, by exploiting the finite sum structure (\ref{problem finite sum}).

\subsection{Proof of Proposition \ref{Prop grad devlop}}\label{Appendix prop grad develop}
In order to demonstrate Proposition~\ref{Prop grad devlop}, we first establish Lemma~\ref{lemma develop squared norm}.

\begin{lemma}\label{lemma develop squared norm}
Let $\{ a_i \}_{i = 1}^N$ be a sequence of vectors in $\R^d$ and $K\in \N$. We define $ \mathcal{B}(K,N) = \{ B \subset \{1,\dots,N\} | \text{Card}(B) = K\}$. Then, we have
\begin{equation}
 \norm{ \sum_{i = 1}^N a_i}^2 = \frac{N}{K}\frac{1}{\binom{N}{K}}\sum_{B \in \mathcal{B}(K,N)} \norm{\sum_{i \in B} a_i}^2 + 2\frac{N-K}{N-1}\sum_{1 \leq i < j \leq N}\dotprod{a_i,a_j}.
\end{equation}
\end{lemma}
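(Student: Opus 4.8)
The plan is to expand the squared norm of the full sum directly in terms of pairwise inner products, and then to expand each batch sum in the same way, so that matching the two expansions reduces the identity to a simple counting argument. Concretely, I would first write
\begin{equation}
\norm{\sum_{i=1}^N a_i}^2 = \sum_{i=1}^N \norm{a_i}^2 + 2\sum_{1\le i<j\le N}\dotprod{a_i,a_j},
\end{equation}
and similarly, for each batch $B\in\mathcal{B}(K,N)$,
\begin{equation}
\norm{\sum_{i\in B} a_i}^2 = \sum_{i\in B}\norm{a_i}^2 + 2\sum_{\substack{i<j\\ i,j\in B}}\dotprod{a_i,a_j}.
\end{equation}

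Next I would sum the batch expansion over all $B\in\mathcal{B}(K,N)$ and count how many batches contain a given index or a given pair. The key combinatorial facts are that each singleton $\{i\}$ lies in $\binom{N-1}{K-1}$ of the $\binom{N}{K}$ batches, and each pair $\{i,j\}$ lies in $\binom{N-2}{K-2}$ of them. Therefore
\begin{equation}
\sum_{B\in\mathcal{B}(K,N)}\norm{\sum_{i\in B}a_i}^2 = \binom{N-1}{K-1}\sum_{i=1}^N\norm{a_i}^2 + 2\binom{N-2}{K-2}\sum_{1\le i<j\le N}\dotprod{a_i,a_j}.
\end{equation}
Multiplying by $\frac{N}{K}\frac{1}{\binom{N}{K}}$ and using $\frac{N}{K}\frac{\binom{N-1}{K-1}}{\binom{N}{K}}=1$ recovers exactly the coefficient $1$ in front of $\sum_i\norm{a_i}^2$, which is the coefficient appearing in the full-sum expansion.

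The final step is to verify that the cross-term coefficients match: I would compute $\frac{N}{K}\frac{\binom{N-2}{K-2}}{\binom{N}{K}}$ and check it equals $\frac{N-K}{N-1}$ subtracted appropriately, i.e. confirm that the residual pairwise contribution left after the $\frac{N}{K}\frac1{\binom NK}$-weighted batch sum is precisely $2\frac{N-K}{N-1}\sum_{i<j}\dotprod{a_i,a_j}$. This amounts to checking $1-\frac{N}{K}\frac{\binom{N-2}{K-2}}{\binom{N}{K}}=\frac{N-K}{N-1}$, which follows from simplifying $\frac{N}{K}\frac{\binom{N-2}{K-2}}{\binom{N}{K}}=\frac{K-1}{N-1}$. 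The only ``obstacle'' here is purely bookkeeping: correctly counting batch memberships of singletons and pairs and keeping the binomial simplifications straight; there is no conceptual difficulty, and the lemma then follows by equating the two expansions. Note that Proposition~\ref{Prop grad devlop} is the special case $a_i=\nabla f_i(x)$, $K$ arbitrary, once one identifies $\frac{1}{\binom{N}{K}}\sum_{B}\norm{\sum_{i\in B}a_i}^2$ with $K^2\,\mathbb{E}[\norm{\tilde\nabla_K(x)}^2]$ via the uniform sampling of $B$.
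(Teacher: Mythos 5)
Your proof is correct, and it takes a genuinely more direct route than the paper's. You expand both $\norm{\sum_{i=1}^N a_i}^2$ and each batch term $\norm{\sum_{i\in B}a_i}^2$ into squared norms plus pairwise inner products, and then reduce everything to two counting facts: a singleton $\{i\}$ lies in $\binom{N-1}{K-1}$ batches and a pair $\{i,j\}$ lies in $\binom{N-2}{K-2}$ batches; the binomial simplifications $\frac{N}{K}\binom{N-1}{K-1}/\binom{N}{K}=1$ and $\frac{N}{K}\binom{N-2}{K-2}/\binom{N}{K}=\frac{K-1}{N-1}$, together with $1-\frac{K-1}{N-1}=\frac{N-K}{N-1}$, are all correct and close the argument. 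The paper instead writes each batch sum via its complement, $\sum_{i\in B}a_i=\sum_{i=1}^N a_i-\sum_{i\notin B}a_i$, expands into four terms, and performs the analogous counting over complement sets (with coefficients $\binom{N-1}{N-K-1}$ and $\binom{N-2}{N-K-2}$) before rearranging. The two arguments are combinatorially equivalent, but yours tracks fewer terms and makes the structure of the identity (match the coefficient of the squares, then read off the residual cross-term coefficient) more transparent; the paper's complement decomposition buys nothing essential here and is simply a longer path to the same counts. Your closing remark on recovering Proposition~\ref{Prop grad devlop} is also right: with $a_i=\nabla f_i(x)$ one has $\frac{1}{\binom{N}{K}}\sum_{B}\norm{\sum_{i\in B}a_i}^2=K^2\,\mathbb{E}\bigl[\norm{\tilde\nabla_K(x)}^2\bigr]$, and dividing the identity by $N^2$ gives exactly the proposition (the paper equivalently applies the lemma to $a_i=\frac{1}{N}\nabla f_i(x)$, which is the same by homogeneity). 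The only implicit hypotheses, shared with the paper, are $N\ge 2$ and $1\le K\le N$, with the convention $\binom{N-2}{-1}=0$ handling $K=1$.
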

\begin{proof}
We fix $\{ i_1,\dots, i_k \} \in \mathcal{B}(K,N)$. We have
\begin{align}
    \norm{\sum_{i \in \{ i_1,\dots, i_k \}} a_i}^2 &= \norm{\sum_{i=1}^N a_i - \sum_{i \notin \{ i_1,\dots, i_k\}} a_i }^2 \\
    &= \norm{\sum_{i=1}^N a_i }^2 + \norm{ \sum_{i \notin \{ i_1,\dots, i_k\}} a_i }^2 - 2 \sum_{i=1}^N \sum_{j \notin \{ i_1,\dots,i_k \}} \dotprod{a_i,a_j}\\
    &= \norm{\sum_{i=1}^N a_i }^2 + \sum_{i \notin \{ i_1,\dots, i_k\}}\norm{  a_i }^2+ \sum_{\substack{i,j \notin \{ i_1,\dots, i_k\} \\ i \neq j }}\dotprod{a_i,a_j}- 2 \sum_{i=1}^N \sum_{j \notin \{ i_1,\dots,i_k \}} \dotprod{a_i,a_j}.\label{eq:square_sum_develop_1}
\end{align}
We sum over all the possible $B = \{ i_1,\dots, i_k \} \in \mathcal{B}(K,N)$. Note that $\vert \mathcal{B}(K,N) \vert = \binom{N}{K}$. We split each term in Equation~(\ref{eq:square_sum_develop_1}), first
\begin{align}
    \sum_{B \in \mathcal{B}(K,N)}\norm{\sum_{i=1}^N a_i }^2 = \binom{N}{K}\norm{\sum_{i=1}^N a_i }^2.
\end{align}
The sum of the second term of Equation~(\ref{eq:square_sum_develop_1}) is
\begin{align}
    \sum_{B \in \mathcal{B}(K,N)}\sum_{i \notin \{ i_1,\dots, i_k\}}\norm{  a_i }^2 &=\sum_{B \in \mathcal{B}(N-K,N)}\sum_{i \in \{ i_1,\dots, i_{n-k}\}}\norm{  a_i }^2\\
    &= \binom{N-1}{N-K-1}\sum_{i=1}^N \norm{a_i }^2 \\
    &= \binom{N}{K}\frac{N-K}{N}\sum_{i=1}^N \norm{a_i }^2 \label{eq:square_sum_develop_2}\\
    &= \binom{N}{K}\frac{N-K}{N}\norm{\sum_{i=1}^N a_i }^2 - \binom{N}{K}\frac{N-K}{N}\sum_{\substack{i,j = 1 \\ i \neq j }}^N \dotprod{a_i,a_j}.
\end{align}
The second equality comes from how many times the index $i$ is picked by the sum. We thus count the number of set $\{ i_1, \dots, i_{n-k}\} \in \{ 1,\dots,N \}^{n-k}$ such that $i$ belongs to this set. This amounts to compute the cardinal of the set  $\{ \{ i,  i_1, \dots, i_{n-k-1}\},  i_1,\dots,i_{n-k-1} \in \{ 1,\dots,N\}\backslash \{i\} \}$. This set has the same size as $\mathcal{B}(N-K-1,N-1)$, which is $\binom{N-1}{N-K-1}$.

The sum of the third term of Equation~(\ref{eq:square_sum_develop_1}) is
\begin{align}
     \sum_{B \in \mathcal{B}(K,N)}\sum_{\substack{i,j \notin \{ i_1,\dots, i_k\} \\ i \neq j }}\dotprod{a_i,a_j} &=  \sum_{B \in \mathcal{B}(N-K,N)}\sum_{\substack{i,j \in \{ i_1,\dots, i_{n-k}\} \\ i \neq j }}\dotprod{a_i,a_j} \\
     &= \binom{N-2}{N-K-2}\sum_{\substack{i,j = 1 \\ i \neq j }}^N \dotprod{a_i,a_j}\\
     &= \binom{N}{K}\frac{(N-K)(N-K-1)}{N(N-1)}\sum_{\substack{i,j = 1 \\ i \neq j }}^N \dotprod{a_i,a_j}.
\end{align}
Here, the second equality comes from the fact that we compute the size of the set $\{ \{ i,j,  i_1, \dots, i_{n-k-2}\},  i_1,\dots,i_{n-k-2} \in \{ 1,\dots,N\}\backslash \{i,j\} \}$, which is of the same size as $\mathcal{B}(N-K-2,N-2)$, which is $\binom{N-2}{N-K-2}$.

Finally, we compute the sum of the fourth term of Equation~(\ref{eq:square_sum_develop_1}), using Equation~(\ref{eq:square_sum_develop_2})
\begin{align}
    \sum_{B \in \mathcal{B}(K,N)}\sum_{i=1}^N \sum_{j \notin \{ i_1,\dots,i_k \}} \dotprod{a_i,a_j} &= \sum_{i=1}^N \dotprod{a_i,\sum_{B \in \mathcal{B}(K,N)}\sum_{j \notin \{ i_1,\dots,i_k \}}a_j}\\
    &=\sum_{i=1}^N \dotprod{a_i,\binom{N}{K}\frac{N-K}{N}\sum_{j=1}^Na_j}\\
    &=\binom{N}{K}\frac{N-K}{N} \norm{\sum_{i=1}^N a_i}^2.
\end{align}

Now that we have computed the sum of each terms in Equation~(\ref{eq:square_sum_develop_1}), we have
\begin{align}
    &\sum_{B \in \mathcal{B}(K,N)}\norm{\sum_{i \in \{ i_1,\dots, i_k \}} a_i}^2 =  \binom{N}{K}\norm{ \sum_{i=1}^N a_i }^2 +\binom{N}{K}\frac{N-K}{N}\left(\norm{\sum_{i=1}^N a_i }^2 - \sum_{\substack{i,j = 1 \\ i \neq j }}^N \dotprod{a_i,a_j} \right) \\
    &+ \binom{N}{K}\frac{(N-K)(N-K-1)}{N(N-1)}\sum_{\substack{i,j = 1 \\ i \neq j }}^N \dotprod{a_i,a_j} - 2\binom{N}{K}\frac{N-K}{N} \norm{\sum_{i=1}^N a_i}^2 \\
    &=\binom{N}{K}\left(1- \frac{N-K}{N}\right) \norm{ \sum_{i=1}^N a_i }^2 +\binom{N}{K}\frac{N-K}{N}\left( \frac{N-K-1}{N-1} - 1\right)\sum_{\substack{i,j = 1 \\ i \neq j }}^N \dotprod{a_i,a_j}\\
    &=\binom{N}{K}\frac{K}{N}\norm{ \sum_{i=1}^N a_i }^2  - \binom{N}{K} \frac{K}{N}\frac{N-K}{N-1}\sum_{\substack{i,j = 1 \\ i \neq j }}^N \dotprod{a_i,a_j}.
    \end{align}
By rearranging the terms, we obtain the desired result
\begin{align}
    \norm{ \sum_{i=1}^N a_i }^2 = \frac{N}{K}\frac{1}{\binom{N}{K}}\sum_{B \in \mathcal{B}(K,N)}\norm{\sum_{i \in \{ i_1,\dots, i_k \}} a_i}^2  + \frac{N-K}{N-1}\sum_{\substack{i,j = 1 \\ i \neq j }}^N \dotprod{a_i,a_j}.
\end{align}
\end{proof}

The proof the Proposition~\ref{Prop grad devlop} is simply an application of Lemma~\ref{lemma develop squared norm} with $a_i = \frac{1}{N}\nabla f_i(x)$. We obtain
\begin{align}
    \norm{ \nabla f(x) }^2 &= \norm{ \frac{1}{N} \sum_{i=1}^N \nabla f_i(x) }^2 \\
    &=\frac{N}{K}\frac{1}{\binom{N}{K}}\sum_{B \in \mathcal{B}(K,N)}\norm{\frac{1}{N}\sum_{i \in \{ i_1,\dots, i_k \}} \nabla f_i(x)}^2  + \frac{N-K}{N-1}\frac{1}{N^2}\sum_{\substack{i,j = 1 \\ i \neq j }}^N \dotprod{\nabla f_i(x),\nabla f_j(x)} \\
    &=\frac{K}{N}\frac{1}{\binom{N}{K}}\sum_{B \in \mathcal{B}(K,N)}\norm{\frac{1}{K}\sum_{i \in \{ i_1,\dots, i_k \}} \nabla f_i(x)}^2 + \frac{N-K}{N-1}\frac{1}{N^2}\sum_{\substack{i,j = 1 \\ i \neq j }}^N \dotprod{\nabla f_i(x),\nabla f_j(x)}\\
    &=\frac{K}{N}\E[\norm{\gradsto{x}}^2] + \frac{N-K}{N-1}\frac{1}{N^2}\sum_{\substack{i,j = 1 \\ i \neq j }}^N \dotprod{\nabla f_i(x),\nabla f_j(x)} \\
    &= \frac{K}{N}\mathbb{E}\left[ \lVert \gradsto{x} \rVert^2 \right]
    + \frac{2}{N^2}\frac{N-K}{N-1}\sum_{1 \leq i< j \leq N} \langle \nabla f_i(x),\nabla f_j(x) \rangle.
\end{align}

\subsection{Proof of Proposition \ref{prop:racoga_batch_1}}\label{app:prop_racoga_batch_1}
In this part, we demonstrate Proposition~\ref{prop:racoga_batch_1}. The result is a direct consequence of the \ref{ass:racoga} condition. Indeed, considering batch of size 1, by Proposition \ref{Prop grad devlop} we have $\forall x \in \R^d$
\begin{equation}\label{eq:proof_prop_2_1}
    \lVert \nabla f(x) \rVert^2  = \frac{1}{N}\mathbb{E}\left[ \norm{\tilde \nabla_1(x)}^2 \right]
    + \frac{2}{N^2}\sum_{1 \leq i< j \leq N} \langle \nabla f_i(x),\nabla f_j(x) \rangle.
\end{equation}
Now recall the \ref{ass:racoga} condition
\begin{equation}\tag{RACOGA}
    \forall x \in \mathbb{R}^d, ~\frac{\sum_{1 \leq i< j \leq N} \langle \nabla f_i(x),\nabla f_j(x) \rangle }{\sum_{i = 1}^N \lVert \nabla f_i(x) \rVert^2 } \geq c.
\end{equation}
We inject \ref{ass:racoga} in Equation (\ref{eq:proof_prop_2_1}) to get
\begin{align}
    \lVert \nabla f(x) \rVert^2  &\geq \frac{1}{N}\mathbb{E}\left[ \norm{\tilde \nabla_1(x)}^2 \right]
    +c \frac{2}{N^2}\sum_{i = 1}^N \norm{\nabla f_i(x)}^2 \\
    &= \frac{1}{N}\mathbb{E}\left[ \norm{\tilde \nabla_1(x)}^2 \right]
    +c \frac{2}{N}\mathbb{E}\left[ \norm{\tilde \nabla_1(x)}^2 \right]\\
    &=\frac{1}{N}\left(1+2c\right)\mathbb{E}\left[ \norm{\tilde \nabla_1(x)}^2 \right]. \label{eq:_proof_prop_2_2}
\end{align}
From Equation (\ref{eq:_proof_prop_2_2}), that holds $\forall x \in \R^d$, we deduce that $f$ satisfy \ref{SGC} with $\rho_1 \leq \frac{N}{1+2c}$.

\subsection{Proof of Lemma \ref{lemma saturation}}\label{Appendix lemma saturation}
In this part, we  demonstrate Lemma~\ref{lemma saturation}. Assume that for batches of size $1$, $f$ verifies a $\rho_1$-\ref{SGC}, \textit{i.e.}
\begin{equation}
      \forall x \in \R^d, ~ \frac{1}{N}\sum_{i=1}^N\lVert \nabla f_i(x) \rVert^2 \leq \rho_1 \lVert \nabla f(x) \rVert^2.
\end{equation}
By Proposition~\ref{Prop grad devlop}, we have
\begin{align}\label{eq:lemma4}
\lVert \nabla f(x) \rVert^2  = \frac{K}{N}\mathbb{E}\left[ \lVert \tilde \nabla_K(x) \rVert^2 \right]
    + \frac{2}{N^2}\frac{N-K}{N-1}\sum_{1 \leq i< j \leq N} \langle \nabla f_i(x),\nabla f_j(x) \rangle.
\end{align}
Moreover, by developing the squared norm of $\nabla f(x)$ and rearranging, we get 
\begin{equation}\label{eq:sqrt_norm_dvl}
       \frac{1}{N}\sum_{1\leq i<j\leq N}\dotprod{\nabla f_i(x), \nabla f_j(x)}= \frac{N}{2}\norm{\nabla f(x)}^2 - \frac{1}{2N} \sum_{i =1}^N\norm{\nabla f_i(x)}^2,
\end{equation} 
hence, by reinjecting (\ref{eq:sqrt_norm_dvl}) into (\ref{eq:lemma4}),
\begin{eqnarray}
\mathbb{E}\left[ \lVert \gradsto{x} \rVert^2 \right] &=& \frac{N}{K}\left(1-\frac{N-K}{N-1}\right)\norm{\nabla f(x)}^2 +\frac{1}{NK}\frac{N-K}{N-1}\sum_{i =1}^N\norm{\nabla f_i(x)}^2\label{eq:lemma_1_reciprocal}\\
&\leq& \frac{N}{K}\left(1-\frac{N-K}{N-1}+\frac{1}{N}\frac{N-K}{N-1}\rho_1\right)\norm{\nabla f(x)}^2 \\
&\leq& \frac{1}{K(N-1)}\left(N(K-1)+(N-K)\rho_1\right)\norm{\nabla f(x)}^2 
\end{eqnarray} 
using the SGC assumption for batches of size $1$.

   

Finally we deduce Lemma~\ref{lemma saturation}: the \ref{SGC} is verified for every size of batch $K \ge 1$ and we have
\begin{equation}
    \rho_K \leq  \frac{1}{K(N-1)}\left(\rho_1(N-K)  + (K-1)N  \right).
\end{equation}

\begin{remark}
     Lemma \ref{lemma saturation} offers an indirect way to demonstrate Corollary \ref{corollary ass:poscor}. Indeed, the result of Proposition \ref{Prop grad devlop} for the special case of batches of size $1$ is easily computed, as we have
     \begin{equation}
         \norm{\nabla f(x)}^2 = \frac{1}{N^2}\sum_{i=1}^N \norm{\nabla f_i(x)}^2 + \frac{2}{N^2}\sum_{1 \leq i<j\leq N} \dotprod{\nabla f_i(x), \nabla f_j(x)}.
     \end{equation}
     Thus, when \ref{ass:poscor} is verified, with batches of size $1$, $f$ verifies \ref{SGC} with constant $\rho_1 = N$. By applying Lemma \ref{lemma saturation} with $\rho_1 = N$, we get that for batches of size $K$, $f$ verifies $\ref{SGC})$ with constant 
     \begin{equation}
    \rho_K \le  \frac{1}{K(N-1)}\left(N(N-K)  + (K-1)N  \right) = \frac{N}{K}.     
     \end{equation}
    For the clarity of our presentation, we choose to present Proposition~\ref{Prop grad devlop} before Lemma~\ref{lemma saturation}, even if it 
    can be seen as
    a corollary of this result.
\end{remark}
One can obtain a reciprocal result of Lemma \ref{lemma saturation}.
\begin{proposition}\label{prop:lemma_1_reciprocal}
Assume that for batches of size $K$, $f$ verifies the \ref{SGC} with  constant $\rho_K$. Then, for batches of size $1$, $f$ verifies the \ref{SGC} with a constant $\rho_{1}$ which satisfies
    \begin{align}
     \rho_1 \leq \frac{ K\rho_K(N-1) - N(K-1)}{N-K}.\label{prop:reciprocal}
    \end{align}
\end{proposition}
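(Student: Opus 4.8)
The plan is to \emph{invert} the exact identity already established in the proof of Lemma~\ref{lemma saturation}, rather than redoing the combinatorics of Proposition~\ref{Prop grad devlop}. The starting point is the elementary remark that for batches of size $1$ the estimator is $\tilde\nabla_1(x)=\nabla f_i(x)$ with $i$ drawn uniformly, so that $\mathbb{E}[\norm{\tilde\nabla_1(x)}^2]=\frac{1}{N}\sum_{i=1}^N\norm{\nabla f_i(x)}^2$, equivalently $\sum_{i=1}^N\norm{\nabla f_i(x)}^2=N\,\mathbb{E}[\norm{\tilde\nabla_1(x)}^2]$. This is what lets me trade the sum of per-function gradient norms for the batch-$1$ second moment.

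First I would recall Equation~(\ref{eq:lemma_1_reciprocal}), which holds for every $x\in\R^d$ and \emph{without} any growth assumption, and simplify $1-\frac{N-K}{N-1}=\frac{K-1}{N-1}$. Substituting the identity above converts it into a linear relation among the three quantities $\mathbb{E}[\norm{\tilde\nabla_K(x)}^2]$, $\mathbb{E}[\norm{\tilde\nabla_1(x)}^2]$ and $\norm{\nabla f(x)}^2$:
\[
\mathbb{E}[\norm{\tilde\nabla_K(x)}^2] = \frac{N(K-1)}{K(N-1)}\norm{\nabla f(x)}^2 + \frac{1}{K}\frac{N-K}{N-1}\,\mathbb{E}[\norm{\tilde\nabla_1(x)}^2].
\]
Solving this linear relation for the batch-$1$ moment (using $K<N$, so that the coefficient $\frac{1}{K}\frac{N-K}{N-1}$ is nonzero) yields
\[
\mathbb{E}[\norm{\tilde\nabla_1(x)}^2] = \frac{K(N-1)}{N-K}\,\mathbb{E}[\norm{\tilde\nabla_K(x)}^2] - \frac{N(K-1)}{N-K}\norm{\nabla f(x)}^2.
\]

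The final step is to insert the hypothesis that $\tilde\nabla_K$ verifies \ref{SGC}, i.e. $\mathbb{E}[\norm{\tilde\nabla_K(x)}^2]\le\rho_K\norm{\nabla f(x)}^2$. Since $K<N$ the prefactor $\frac{K(N-1)}{N-K}$ is strictly positive, so multiplying through preserves the inequality and gives, uniformly in $x$,
\[
\mathbb{E}[\norm{\tilde\nabla_1(x)}^2] \le \frac{K(N-1)\rho_K - N(K-1)}{N-K}\norm{\nabla f(x)}^2,
\]
which is precisely the asserted bound on $\rho_1$.

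I do not expect a genuine analytic obstacle: the argument is purely algebraic once Equation~(\ref{eq:lemma_1_reciprocal}) is in hand. The only points demanding care are bookkeeping ones, namely verifying that $\frac{K(N-1)}{N-K}>0$ so that the \ref{SGC} inequality is not reversed, checking that Equation~(\ref{eq:lemma_1_reciprocal}) was derived with no batch-$1$ assumption so the inversion is legitimate, and observing that the relation is pointwise in $x$, which is what promotes the final estimate into a valid \ref{SGC} with batch size $1$. As a sanity check I would note that the special value $\rho_K=\frac{N}{K}$ returns $\rho_1=N$, matching the \ref{ass:poscor} case of Corollary~\ref{corollary ass:poscor}.
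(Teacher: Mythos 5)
Your proof is correct and follows essentially the same route as the paper: both start from the exact identity in Equation~(\ref{eq:lemma_1_reciprocal}), identify $\frac{1}{N}\sum_{i=1}^N\norm{\nabla f_i(x)}^2$ with $\mathbb{E}[\norm{\tilde\nabla_1(x)}^2]$, and apply the batch-$K$ \ref{SGC} bound, with only cosmetic differences in whether one isolates the batch-$1$ moment before or after invoking the hypothesis. The sanity check $\rho_K=\frac{N}{K}\Rightarrow\rho_1=N$ is a nice addition but does not change the argument.
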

\begin{proof}
From Equality (\ref{eq:lemma_1_reciprocal}), we have

   \begin{align}
       \mathbb{E}\left[ \lVert \gradsto{x} \rVert^2 \right] 
 &= \frac{1}{KN}\frac{N-K}{N-1}\sum_{i = 1}^N \norm{\nabla f_i(x)}^2 +\frac{K-1}{K}\frac{N}{N-1}\norm{\nabla f(x)}^2.
 \end{align}

Using the \ref{SGC}, we obtain
  \begin{align}
     \frac{1}{K}\frac{N-K}{N-1}\frac{1}{N}\sum_{i = 1}^N \norm{\nabla f_i(x)}^2 \leq \left( \rho_K - \frac{K-1}{K}\frac{N}{N-1}\right)\norm{\nabla f(x)}^2.
 \end{align}
 
So, the \ref{SGC} is verified with batches of size $1$ and (\ref{prop:reciprocal}) is proved.
\end{proof}
\begin{remark}\label{rem:interpol_sgc}
     Proposition~\ref{prop:lemma_1_reciprocal} indicates that satisfying the \ref{SGC} for batches of size $K$ implies interpolation (Assumption~\ref{ass:interpolation}) if each $f_i$ is convex. Indeed, Proposition~\ref{prop:lemma_1_reciprocal} induces that the \ref{SGC} is verified for batches of size $1$ with some $\rho_1 \geq 1$, \textit{i.e.}
\begin{equation}\label{eq:rem:interpol_sgc}
      \forall x \in \R^d, ~ \frac{1}{N}\sum_{i=1}^N\lVert \nabla f_i(x) \rVert^2 \leq \rho_1 \lVert \nabla f(x) \rVert^2.
\end{equation}

Interpolation is then a direct consequence of evaluating Inequality (\ref{eq:rem:interpol_sgc}) at some $x^\ast \in \arg \min f$. Indeed, it implies that each minimizer of $f$ is a critical point of each $f_i$. Convexity of the $f_i$ allow to conclude.
\end{remark}
\subsection{Proof of Theorem~\ref{th:saturation}}\label{app:th_saturation}
In this section, we demonstrate Theorem~\ref{th:saturation}. Recall that we assume $f$ is convex, $L$-smooth, and that $\tilde \nabla_1$ verify \ref{SGC} for $\rho_1 \geq 1$. We know that SNAG with good choice of parameters (see Theorem~\ref{thm:cv_snag_exp}) guarantees to reach an $\varepsilon$-precision (\ref{eq:epsilon_pres}) if
\begin{equation}
    n \geq \rho_K \sqrt{\frac{2L}{\varepsilon}}\norm{x_0-x^\ast}.
\end{equation}
Note that by Lemma~\ref{lemma saturation}, we know that $\rho_k$, the \ref{SGC} constant associated to $\tilde \nabla_K$, exists, and that
     \begin{equation}
    \rho_K \le  \frac{1}{K(N-1)}\left(\rho_1(N-K)  + (K-1)N  \right).     
     \end{equation}
     So, in particular, we are ensured to reach an $\varepsilon$-precision if
     \begin{align}
         n &\geq  \frac{1}{K(N-1)}\left(\rho_1(N-K)  + (K-1)N  \right)\sqrt{\frac{2L}{\varepsilon}}\norm{x_0-x^\ast}\\
         &= \underbrace{\left( \frac{N-K}{N-1} + \frac{N}{\rho_1}\frac{K-1}{N-1} \right)}_{:= \Delta_K}\frac{\rho_1}{K}\sqrt{\frac{2L}{\varepsilon}}\norm{x_0-x^\ast}.
     \end{align}
Now, to translate the result in term of number of $\nabla f_i$ evaluated, note that each iteration requires to evaluate $K$ different $\nabla f_i$, because we consider batches of size $K$. Finally, the number of $\nabla f_i$ to evaluate is
\begin{equation}
    \Delta_K \rho_1\sqrt{\frac{2L}{\varepsilon}}\norm{x_0-x^\ast}.
\end{equation}
\subsection{Bound on \ref{ass:racoga}}\label{app:bounds_racoga}

In this section, we bound the ratio $\frac{\sum_{1 \leq i< j \leq N} \langle \nabla f_i(x),\nabla f_j(x) \rangle }{\sum_{i = 1}^N \lVert \nabla f_i(x) \rVert^2 }$, that is involved in the \ref{ass:racoga} condition.
\begin{proposition}\label{prop:bounds_racoga}
Let $x\in \R^d \backslash \overline{\mathcal{X}}$. We have
\begin{equation}
    -\frac{1}{2}\leq\frac{\sum_{1 \leq i< j \leq N} \langle \nabla f_i(x),\nabla f_j(x) \rangle }{\sum_{i = 1}^N \lVert \nabla f_i(x) \rVert^2 } \leq \frac{N-1}{2}.
\end{equation}
where $\overline{\mathcal{X}} = \{ x\in \R^d, \forall i \in \{1,\dots,N\},\norm{\nabla f_i(x)}=0\}$.
\end{proposition}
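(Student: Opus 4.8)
The plan is to abbreviate $a_i := \nabla f_i(x)$ and reduce everything to the elementary polarization identity. Since $x \in \R^d \backslash \overline{\mathcal{X}}$, at least one $a_i$ is nonzero, so the denominator $\sum_{i=1}^N \norm{a_i}^2$ is strictly positive and the ratio is well defined; this is the only place the hypothesis $x \notin \overline{\mathcal{X}}$ is used.

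For the lower bound I would start from the expansion
\begin{equation}
\norm{\sum_{i=1}^N a_i}^2 = \sum_{i=1}^N \norm{a_i}^2 + 2\sum_{1 \le i < j \le N} \dotprod{a_i,a_j}.
\end{equation}
Rearranging gives $2\sum_{1\le i<j\le N}\dotprod{a_i,a_j} = \norm{\sum_{i=1}^N a_i}^2 - \sum_{i=1}^N \norm{a_i}^2 \ge -\sum_{i=1}^N \norm{a_i}^2$, where the inequality merely uses that the squared norm on the left is nonnegative. Dividing by $2\sum_{i=1}^N \norm{a_i}^2 > 0$ yields $\frac{\sum_{1\le i<j\le N}\dotprod{a_i,a_j}}{\sum_{i=1}^N \norm{a_i}^2} \ge -\frac12$, which is the left inequality.

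For the upper bound I would bound each cross term by Cauchy--Schwarz followed by Young's inequality: $\dotprod{a_i,a_j} \le \norm{a_i}\,\norm{a_j} \le \frac12\left(\norm{a_i}^2 + \norm{a_j}^2\right)$. Summing over all pairs $i<j$ and noting that each index $i$ occurs in exactly $N-1$ such pairs gives
\begin{equation}
\sum_{1\le i<j\le N}\dotprod{a_i,a_j} \le \frac12 \sum_{1\le i<j\le N}\left(\norm{a_i}^2 + \norm{a_j}^2\right) = \frac{N-1}{2}\sum_{i=1}^N \norm{a_i}^2,
\end{equation}
and dividing by $\sum_{i=1}^N \norm{a_i}^2$ gives the right inequality.

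Both bounds are elementary and no step presents a genuine obstacle; the only point requiring minor care is the combinatorial counting in the upper bound, namely that each term $\norm{a_i}^2$ is counted exactly $N-1$ times when the per-pair estimate is summed. It is also worth remarking that the lower bound is attained when $\sum_{i=1}^N a_i = 0$ and the upper bound when all $a_i$ are equal, so the constants $-\tfrac12$ and $\tfrac{N-1}{2}$ are sharp.
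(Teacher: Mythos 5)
Your proof is correct and follows essentially the same route as the paper: the upper bound via $\langle a_i,a_j\rangle \le \tfrac12\bigl(\lVert a_i\rVert^2+\lVert a_j\rVert^2\bigr)$ summed over pairs, and the lower bound from the non-negativity of $\bigl\lVert \sum_{i=1}^N \nabla f_i(x)\bigr\rVert^2$ expanded into diagonal and cross terms. The only cosmetic difference is that the paper obtains the lower bound by contradiction, invoking its batch-size-$1$ gradient-norm decomposition (Proposition~\ref{Prop grad devlop}), whereas you rearrange the same identity directly, which is, if anything, slightly cleaner.
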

\begin{proof}
The upper-bound relies on the inequality $\dotprod{a,b} \leq \frac{1}{2}\norm{a}^2 + \frac{1}{2}\norm{b}^2$, for $a,b \in \R^d$.
 \begin{align}
    \sum_{1 \leq i< j \leq N} \langle \nabla f_i(x),\nabla f_j(x) \rangle &\leq \frac{1}{2}\sum_{1 \leq i< j \leq N} \left(\norm{\nabla f_i(x)}^2 + \norm{\nabla f_j(x)}^2\right) \\
     &= \frac{N-1}{2}\sum_{i = 1}^N \norm{\nabla f_i(x)}^2.
 \end{align}
Using $\dotprod{a,b} \geq -\frac{1}{2}\norm{a}^2 - \frac{1}{2}\norm{b}^2$, for $a,b \in \R^d$, we could get a lower-bound. This lower-bound is not tight when considering a sum of two or more scalar products. This is because the critical equality case occurs when $a = -b$. However considering at least 3 vectors, they cannot be respectively opposite to  each other. Interestingly, Proposition \ref{Prop grad devlop} provides a way to get a tighter lower-bound. 
 
By contradiction, assume there exists $x_{\varepsilon} \in \R^d \backslash \overline{\mathcal{X}}$ such that
 \begin{equation} \label{contradic}
     \frac{\sum_{1 \leq i< j \leq N} \langle \nabla f_i(x_{\varepsilon}),\nabla f_j(x_{\varepsilon}) \rangle }{\sum_{i = 1}^N \lVert \nabla f_i(x_{\varepsilon}) \rVert^2 } = -\frac{1}{2}-\varepsilon
 \end{equation}
 with $\varepsilon > 0$. Using Proposition \ref{Prop grad devlop} with batches of size $1$, we have
 \begin{align}
\norm{\nabla f(x_{\varepsilon})}^2 &= \frac{1}{N^2}\sum_{i=1}^N \norm{\nabla f_i(x_{\varepsilon})}^2 + \frac{2}{N^2}\sum_{1 \leq i<j\leq N} \dotprod{\nabla f_i(x_{\varepsilon}), \nabla f_j(x_{\varepsilon})} \\
&=  \frac{1}{N^2}\sum_{i=1}^N \norm{\nabla f_i(x_{\varepsilon})}^2  - \frac{2}{N^2}\left( \frac{1}{2} + \varepsilon \right) \sum_{i=1}^N \norm{\nabla f_i(x_{\varepsilon})}^2\\
&= -\frac{ 2 \varepsilon}{N^2}\sum_{i=1}^N \norm{\nabla f_i(x_{\varepsilon})}^2 < 0.
 \end{align}
 We thus arrive at a contradiction, as $\norm{\nabla f(x_{\varepsilon})}^2$ is non negative. As a consequence, (\ref{contradic}) cannot hold.
 Thus, at worst we have for all $x \in \R^d \backslash \overline{\mathcal{X}}$
 \begin{equation}\label{eq:inequality_racoga_not_all_space}
     -\frac{1}{2} \leq \frac{\sum_{1 \leq i< j \leq N} \langle \nabla f_i(x),\nabla f_j(x) \rangle }{\sum_{i = 1}^N \lVert \nabla f_i(x) \rVert^2 }.
 \end{equation}
 So, Proposition~\ref{prop:bounds_racoga} is proved.
\end{proof}

\section{\ref{ass:racoga} values in linear regression}\label{app:racoga_curvature}

In this section, we give deeper insights considering \ref{ass:racoga} values in the case of the linear regression problem. Moreover, we investigate, in this linear regression context, the link between \ref{ass:racoga} values and the curvature.

We have $\{a_i,b_i\}_{i=1}^N$, where each $(a_i,b_i) \in \R^d \times \R$, and we want to minimize $f$, with
\begin{equation}\tag{LR}
    f(x) := \frac{1}{N}\sum_{i=1}^N f_i(x) := \frac{1}{N}\sum_{i=1}^N \frac{1}{2}(\dotprod{a_i,x} - b_i)^2.
\end{equation}
As mentioned in Section~\ref{sec:xp_linear_reg}, in this case the correlation between gradients is directly linked to the correlation between data by
\begin{equation}\label{eq:gradient_data_corr}
    \underbrace{\dotprod{\nabla f_i(x),\nabla f_j(x)}}_{\text{gradient correlation}} = (\dotprod{a_i,x} - b_i)(\dotprod{a_j,x} - b_j)\underbrace{\dotprod{a_i,a_j}}_{\text{data correlation}}.
\end{equation}
In particular uncorrelated data, \textit{i.e. } $\dotprod{a_i,a_j} = 0$, will induce uncorrelated gradients, \textit{i.e.} $\forall x \in \R^d, ~\dotprod{\nabla f_i(x), \nabla f_j(x)} = 0$. In this case, \ref{ass:racoga} is verified for $c=0$. In this section, we will see that outside this special case of orthogonal data, the characterization of \ref{ass:racoga} values is a challenging problem.

\subsection{Two functions in two dimensions}\label{app:racoga_curvature_I}
We study in this subsection the simplified case with
$d=2$, $N=2$ and $b_1 = b_2 = 0$, \textit{i.e.} two functions defined on $\R^2$ such that $0$ is their unique minimizer. Formally the function we consider is the following

\begin{equation}\label{ex:2d_2n_case}\tag{2f}
    f(x) := \frac{1}{2}\left(\frac{1}{2}\dotprod{a_1,x}^2 + \frac{1}{2}\dotprod{a_2,x}^2 \right).
\end{equation}
In this special case, we look at the gradient correlation, with same sign of the \ref{ass:racoga} values,
\begin{equation}
  \Delta(x) :=  \dotprod{\nabla f_1(x),\nabla f_2(x)} = \dotprod{a_1,x}\dotprod{a_2,x}\dotprod{a_1,a_2}.
\end{equation}
If $\dotprod{a_1,a_2} \neq 0$, $ \Delta(x)$ is not identically equal to zero. Without loss of generality, for the following reasoning we can assume $\dotprod{a_1,a_2} > 0$. Choose $x_0\in \R^2$ such that $\dotprod{a_1,x_0}>0$ and $\dotprod{a_2,x_0} \neq 0$. The function $x \to \dotprod{a_1,x}$ has a kernel $a_1^\perp$, the orthogonal of $a_1$. Moving along this kernel, \textit{i.e.} considering $x = x_0 + k$, $k \in a_1^\perp$, if $a_1$ and $a_2$ are not colinear, one can make the scalar product $\dotprod{a_2,x}$ be positive or negative while $\dotprod{a_1,x}>0$ as it remains equal to $\dotprod{a_1,x_0}$. Therefore necessarily, if $\dotprod{a_1,a_2} \neq 0$ and $a_1$ and $a_2$ are not colinear, we have $\min_x \Delta(x) < 0$. So, if $\dotprod{a_1,a_2} \ne 0$, the minimum of the \ref{ass:racoga} values on the space is necessarily negative. 

\begin{figure}[!ht]
        \centering
        \includegraphics[scale=0.5]{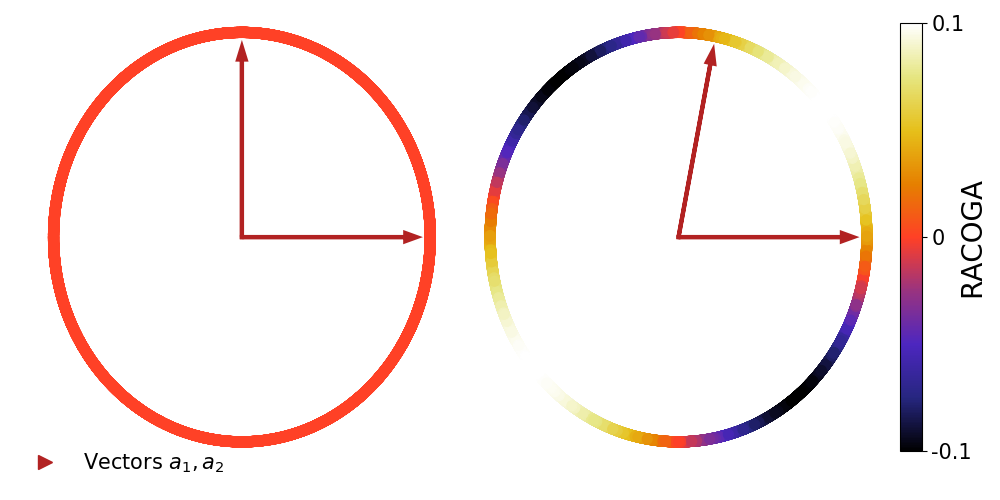}
       \caption{Illustration of the \ref{ass:racoga} values for problem (\ref{ex:2d_2n_case}), along a circle around the solution. On the left part, $a_1$ and $a_2$ are orthogonal, inducing \ref{ass:racoga} is constant equal to zero. On the right part, $a_1$ and $a_2$ are slightly correlated, inducing positive and negative \ref{ass:racoga} values. Note that the non positive \ref{ass:racoga} areas exactly contain the points $x\in \R^d$ such that $\dotprod{a_1,x}\dotprod{a_2,x} \leq 0$.}
    \label{fig:curvature}
\end{figure}

Figure~\ref{fig:curvature} illustrates this behaviour. We observe that non orthogonality of $a_1$ and $a_2$ creates non positive and non negative areas of \ref{ass:racoga} values.

According to Theorem~\ref{th:saturation}, non positive \ref{ass:racoga} values indicate a bad performance of SNAG (Algorithm~\ref{alg:SNAG}). The example of this section indicates that we can not hope to obtain theoretical results that would ensure high \ref{ass:racoga} values for any linear regression problem, and thus a good performance of SNAG. In the next section we see that, nevertheless, we can expect the \ref{ass:racoga} values to be positive over most of the space.

\subsection{\ref{ass:racoga} is high over most of the space}\label{app:racoga_curvature_II}
In Section~\ref{app:racoga_curvature_I}, we considered a $2$-dimensional example with $2$-functions. 
Increasing dimension and adding functions, the problem of characterizing \ref{ass:racoga} values becomes harder. In the case of independent data, it is possible to
give a theoretical result considering the \ref{ass:racoga} values in expectation over the data.
\begin{proposition}\label{prop:expectation_indep_data}
Let $f(x) = \frac{1}{N}\sum_{i = 1}^N (\Phi(x,a_i)-b_i)^2$ where $\{ a_i, b_i \}_{i=1}^N  \in \R^p \times \R$ are i.i.d.
and $\Phi:\R^d\times\R^p \to \R$ is differentiable. 
We have
 \begin{equation}\label{eq:linear_regression_propr_1}
     \E[\sum_{1 \leq i< j \leq N} \langle \nabla f_i(x),\nabla f_j(x) \rangle] = \frac{N(N-1)}{2}\norm{ \E[(\Phi(x,a_1)-b_1) \nabla \Phi(x,a_1)]}^2 \geq 0.
\end{equation}
In particular if $\Phi(x,a_i) = \dotprod{x,a_i}$ 
and $a_1 \sim \mathcal{N}(m,\Gamma)$, $b_1 \sim \mathcal{N}(m_b,\sigma_b^2)$ with $a_1 \ind b_1$, we have
\begin{equation}\label{eq:linear_regression_propr_2}
    \mathbb{E}\left[  \sum_{1 \leq i< j \leq N} \langle \nabla f_i(x),\nabla f_j(x) \rangle  \right] = \frac{N(N-1)}{2}\norm{ \Gamma x + m m^tx - m_b m}^2 \ge 0.
\end{equation}
In both cases, the expectation is taken with respect to the data $\{ a_i, b_i \}_{i=1}^N$.
\end{proposition}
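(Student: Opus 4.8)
The plan is to exploit the independence of the data rather than any convexity or smoothness of $\Phi$. The starting observation is that, for a fixed $x$, the gradient $\nabla f_i(x) = (\Phi(x,a_i)-b_i)\,\nabla_x\Phi(x,a_i)$ depends only on the single datum $(a_i,b_i)$. Since the pairs $(a_i,b_i)$ are i.i.d., the random vectors $\{\nabla f_i(x)\}_{i=1}^N$ are themselves i.i.d.\ (being fixed measurable images of i.i.d.\ inputs); in particular $\nabla f_i(x)$ and $\nabla f_j(x)$ are independent whenever $i\neq j$. This independence is the entire engine of the proof.

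First I would establish (\ref{eq:linear_regression_propr_1}). For independent, integrable random vectors $X,Y$ one has $\E[\dotprod{X,Y}]=\dotprod{\E[X],\E[Y]}$, obtained by expanding the inner product coordinatewise and using $\E[X_kY_k]=\E[X_k]\E[Y_k]$. Applying this to each cross term with $i\neq j$, and using that the $\nabla f_i(x)$ share a common distribution, gives
\begin{equation*}
\E\left[\dotprod{\nabla f_i(x),\nabla f_j(x)}\right] = \norm{\E[\nabla f_1(x)]}^2 \qquad (i\neq j).
\end{equation*}
Summing over the $\binom{N}{2}=\frac{N(N-1)}{2}$ unordered pairs and substituting $\E[\nabla f_1(x)] = \E[(\Phi(x,a_1)-b_1)\nabla_x\Phi(x,a_1)]$ yields the claimed identity, and nonnegativity is immediate since the right-hand side is a nonnegative multiple of a squared norm.

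Next I would specialize to $\Phi(x,a_1)=\dotprod{x,a_1}$, so that $\nabla_x\Phi(x,a_1)=a_1$ and $\E[\nabla f_1(x)] = \E[(\dotprod{x,a_1}-b_1)a_1]$. Splitting this expectation, the first term is $\E[\dotprod{x,a_1}a_1] = \E[a_1a_1^\top]\,x = (\Gamma + mm^\top)x$ using $\E[a_1a_1^\top]=\Cov(a_1)+\E[a_1]\E[a_1]^\top$, while the second is $\E[b_1a_1] = m_b m$ by the independence $a_1\ind b_1$. Combining gives $\E[\nabla f_1(x)] = \Gamma x + mm^\top x - m_b m$, and plugging this into the general identity of the previous step produces (\ref{eq:linear_regression_propr_2}).

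The argument is short and the computations routine, so there is no genuine obstacle; the only points needing care are integrability and the normalization convention. The factorization $\E[\dotprod{X,Y}]=\dotprod{\E X,\E Y}$ requires the first moment of $\nabla f_1(x)$ to be finite, which is automatic in the Gaussian case (second moments of $a_1$ and first moment of $b_1$ are finite) and is the implicit hypothesis behind the general statement (\ref{eq:linear_regression_propr_1}). I would also flag that the stated right-hand sides correspond to the convention $f_i(x)=\tfrac12(\Phi(x,a_i)-b_i)^2$ consistent with (\ref{pbm:lin_reg}); checking that the constant $\tfrac{N(N-1)}{2}$ matches this normalization is the one bookkeeping step worth verifying.
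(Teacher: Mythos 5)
Your proposal is correct and follows essentially the same route as the paper's own proof: independence of the pairs $(a_i,b_i)$ factorizes each cross term into $\dotprod{\E[\nabla f_i(x)],\E[\nabla f_j(x)]} = \norm{\E[\nabla f_1(x)]}^2$, one sums over the $\frac{N(N-1)}{2}$ pairs, and the Gaussian case follows from $\E[a_1a_1^\top]=\Gamma+mm^\top$ and $\E[b_1a_1]=m_bm$ (the paper does this computation coordinatewise where you use matrix notation, a cosmetic difference). Your normalization caveat is well taken: the proposition as stated defines $f_i$ without the factor $\tfrac12$, yet the displayed constant matches $\nabla f_i(x)=(\Phi(x,a_i)-b_i)\nabla\Phi(x,a_i)$, i.e.\ the $\tfrac12(\Phi(x,a_i)-b_i)^2$ convention of (\ref{pbm:lin_reg}), which is exactly the convention the paper's proof also uses implicitly.
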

Proposition~\ref{prop:expectation_indep_data} indicates that when having a large amount of data, we can expect the \ref{ass:racoga} values to be positive over a large part of the space. The proof of Proposition~\ref{prop:expectation_indep_data} is deferred in Appendix~\ref{app:proof_prop_exp_indep}. 

To confirm this statement empirically, for a fixed dataset $\{a_i,b_i\}_{i=1}^N$, we compute the \ref{ass:racoga} values on a sphere whose center is a minimizer of the function. Note that by the linearity of the gradient, the \ref{ass:racoga} values taken on this sphere are invariant by homothety. We set the bias, \textit{i.e.} the $\{ b_i \}_{i=1}^N$, at zero. This forces zero to be a minimizer, without loss of generality. The function we consider is the following
\begin{equation}
    f(x) = \frac{1}{N}\sum_{i=1}^N \frac{1}{2}\dotprod{a_i,x}^2.
\end{equation}

\begin{figure}[!ht]
       \begin{subfigure}[b]{0.5\textwidth}
        \centering
                \includegraphics[scale=0.6]{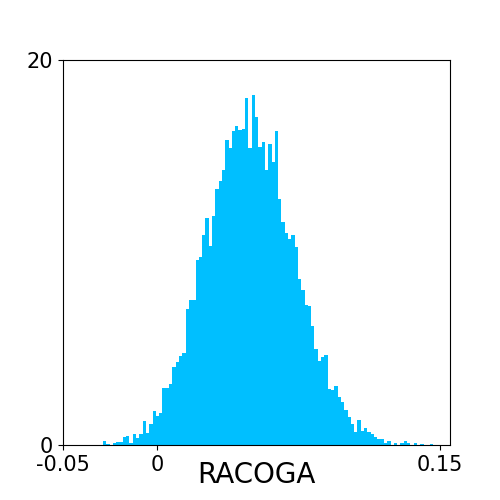}
        \caption{Uncorrelated data}
        \label{fig:racoga_sphere_sphere}

    \end{subfigure}
    \hfill
     \begin{subfigure}[b]{0.5\textwidth}
        \centering
        \includegraphics[scale=0.6]{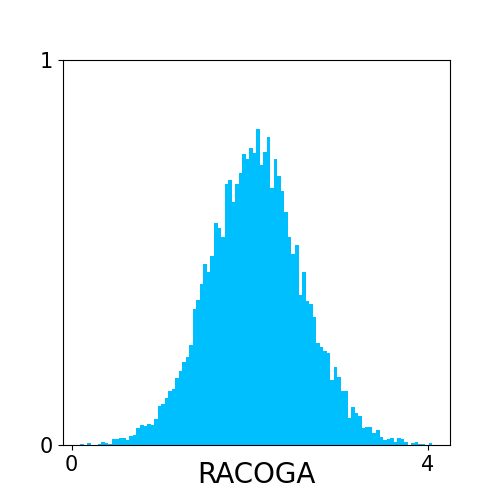}
                \caption{Correlated data}

        \label{fig:racoga_sphere_gaussian_mixture}
    \end{subfigure}
    \caption{Histogram distribution of the \ref{ass:racoga} values for points sampled uniformly on a sphere centered on a minimizer. On the left plot, the data are fewly correlated and the \ref{ass:racoga} values are mostly positive. On the right plot there is correlation inside data, and all \ref{ass:racoga} values are   positive. Note that the \ref{ass:racoga} values are significantly higher on the right plot, because of the higher data correlation.} 
    \label{fig:racoga_sphere}
\end{figure}

On Figure~\ref{fig:racoga_sphere}, we run this experiment in the case where $\{ a_i \}_{i=1}^N$ are drawn uniformly onto the sphere, inducing low correlation inside data, and also in the case it is drawn following a gaussian mixture law, inducing higher correlation inside data. We set $d=1000$, $N = 100$. It is the same problem as for Figure~\ref{fig:cv_linear_regression}, except that here we set the bias to zero. We sample $10 000$ points on the sphere. In both cases, \ref{ass:racoga} is almost only non negative. More, all the \ref{ass:racoga} values are  positive on Figure~\ref{fig:racoga_sphere_gaussian_mixture},\textit{ i.e.} for the correlated dataset. Note that the observations we made in Section~\ref{sec:xp_linear_reg} are consistent: correlated data induce higher \ref{ass:racoga} values (Figure~\ref{fig:racoga_sphere_gaussian_mixture}), whereas with uncorrelated data the \ref{ass:racoga} values are smaller (Figure~\ref{fig:racoga_sphere_sphere}).

However, one should not conclude from Figure~\ref{fig:racoga_sphere_gaussian_mixture} that \ref{ass:racoga} values are positive everywhere, as there could be non positive \ref{ass:racoga} value areas that are so small that our sampled points never fall in.
Even more, we should not conclude from the fact that the eventual areas of non positive \ref{ass:racoga} values are small that the optimisation algorithms never cross them. We show in the following section that, actually, these small non positive \ref{ass:racoga} value areas exist and \textbf{attract} the optimization algorithms, and that stochasticity prevents the algorithm to get stuck inside.

\subsection{The curvature problem: first order algorithms are attracted by low \ref{ass:racoga} value areas}
In Section~\ref{app:racoga_curvature_II}, we showed that in the case $d>N$ where $N$ is not too small, one can expect \ref{ass:racoga} values to be high over most of the space. This statement is reinforced in the presence of correlation inside data (Figure~\ref{fig:racoga_sphere_gaussian_mixture}).

However, if $d$ is high, even though we sample a large amount of points to evaluate \ref{ass:racoga} values, we could miss non positive \ref{ass:racoga} value areas if these areas are too small. On Figure~\ref{fig:racoga_iterations}, we see that these areas indeed exist. Moreover, although they are tiny with respect to the whole space (Section~\ref{app:racoga_curvature_II}), deterministic algorithms, namely GD (Algorithm~\ref{alg:GD}) and NAG (Algorithm~\ref{alg:NAG}), dive into these areas and stayed trapped inside. Strikingly, SNAG behaves differently and it manages not to get stuck in the same area.

\begin{figure}[!ht]
       \begin{subfigure}[b]{0.5\textwidth}
        \centering
                \includegraphics[scale=0.5]{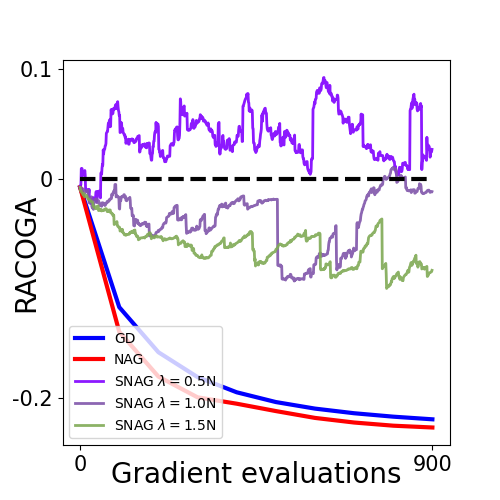}
        \caption{Spherical data}
        \label{fig:racoga_iterations_sphere}

    \end{subfigure}
    \hfill
     \begin{subfigure}[b]{0.5\textwidth}
        \centering
                \includegraphics[scale=0.5]{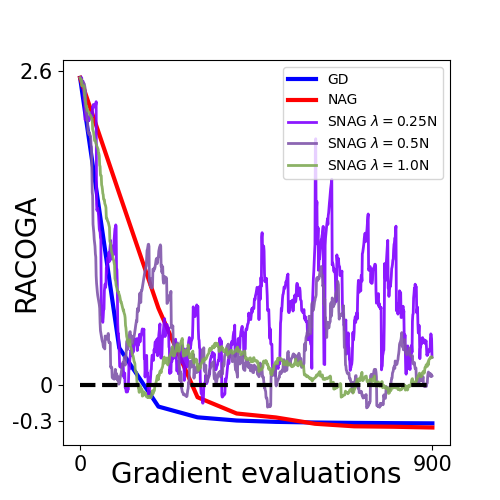}
 
                \caption{Gaussian mixture data}

        \label{fig:racoga_iterations_mixture}
    \end{subfigure}
    \caption{Illustration of \ref{ass:racoga} values taken along iterations of GD (Algorithm \ref{alg:GD}), NAG (Algorithm \ref{alg:NAG}) and
    SNAG (Algorithm \ref{alg:SNAG}, batch size $1$) for the linear regression problem. On the left plot, the data are fewly correlated while on the right plot there is correlation inside data. Note that while deterministic algorithms, \textit{i.e.} GD and NAG, dive and stay in a  negative \ref{ass:racoga} value area, the stochasticity of SNAG enables it to
    not to be trapped in the same zone. 
    }
    \label{fig:racoga_iterations}
\end{figure}

In the remaining of this section, we give an explanation of the behaviour observed on Figure~\ref{fig:racoga_iterations}.

\begin{figure}[!ht]
        \centering
        \includegraphics[scale=0.6]{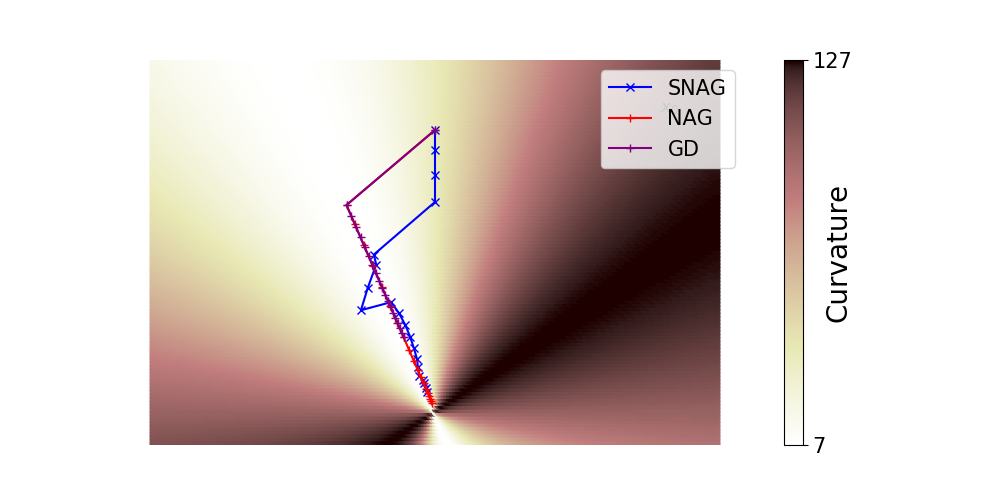}
       \caption{
        Illustration of the iterations of the trajectories of GD (Algorithm~\ref{alg:GD}), NAG (Algorithm~\ref{alg:NAG}) and SNAG (Algorithm~\ref{alg:SNAG}) applied the function (\ref{eq:ex_curvature}). We also display the curvature of the function, which we define at $x \in \R^2_\ast$ as $\frac{x^T A x}{\norm{x}^2}$. Note that the deterministic algorithms GD and NAG dive in the direction of smallest curvature, and then move following this direction. Note also that the instability of SNAG enables itself to follow less strictly this smallest curvature ravine.}
    \label{fig:curvature_traj}
\end{figure}

\paragraph{First order algorithms fall in low curvature area}
In the case of linear regression, which amounts to minimizing a quadratic function, it is well known that first order algorithms, namely algorithms that use only gradient information, converge faster in the direction of high curvature, \textit{i.e.} directions such that the Hessian matrix has a high eigenvalue. We illustrate this phenomenon on Figure~\ref{fig:curvature_traj}, where we plot the first iterations of GD (Algorithm~\ref{alg:GD}) and NAG (Algorithm~\ref{alg:NAG}) applied to the function

\begin{equation}\label{eq:ex_curvature}
    g(x) = \frac{1}{2}x^TAx
\end{equation}
where $A = \begin{pmatrix}
L & 0 \\
0 & \mu 
\end{pmatrix}$, $0<\mu<L$. The algorithms GD and NAG dive and stay in a low curvature zone. However, note that the stochasticity of SNAG makes it unstable enough to not follow the exact
same path.

\paragraph{The \ref{ass:racoga}-Curvature link} In this paragraph, we connect our observations about curvature and \ref{ass:racoga} values. Intuitively, for a point $x\in \R^d$ such that \ref{ass:racoga} is small, \textit{i.e.} gradients are on average anti correlated, the gradients will compensate each other. Thus, we can expect that around this point, the gradient will have low values. Considering linear regression, this induce that non positive \ref{ass:racoga} areas tend to produce low curvature areas. We illustrate this phenomenon on Figure~\ref{fig:curvature_racoga}, where we consider problem~\ref{ex:2d_2n_case}. Actually if $a_1$ and $a_2$ have the same norm, the lowest \ref{ass:racoga} direction coincides exactly with the lowest curvature direction. As we mentioned in the previous paragraph that deterministic algorithms dive and stay in a low curvature zone, they actually dive and stay in low \ref{ass:racoga} areas. The instability provided by stochasticity allows SNAG  not to  get stuck inside these low \ref{ass:racoga} areas.

\begin{figure}[!ht]
        \centering
        \includegraphics[scale=0.5]{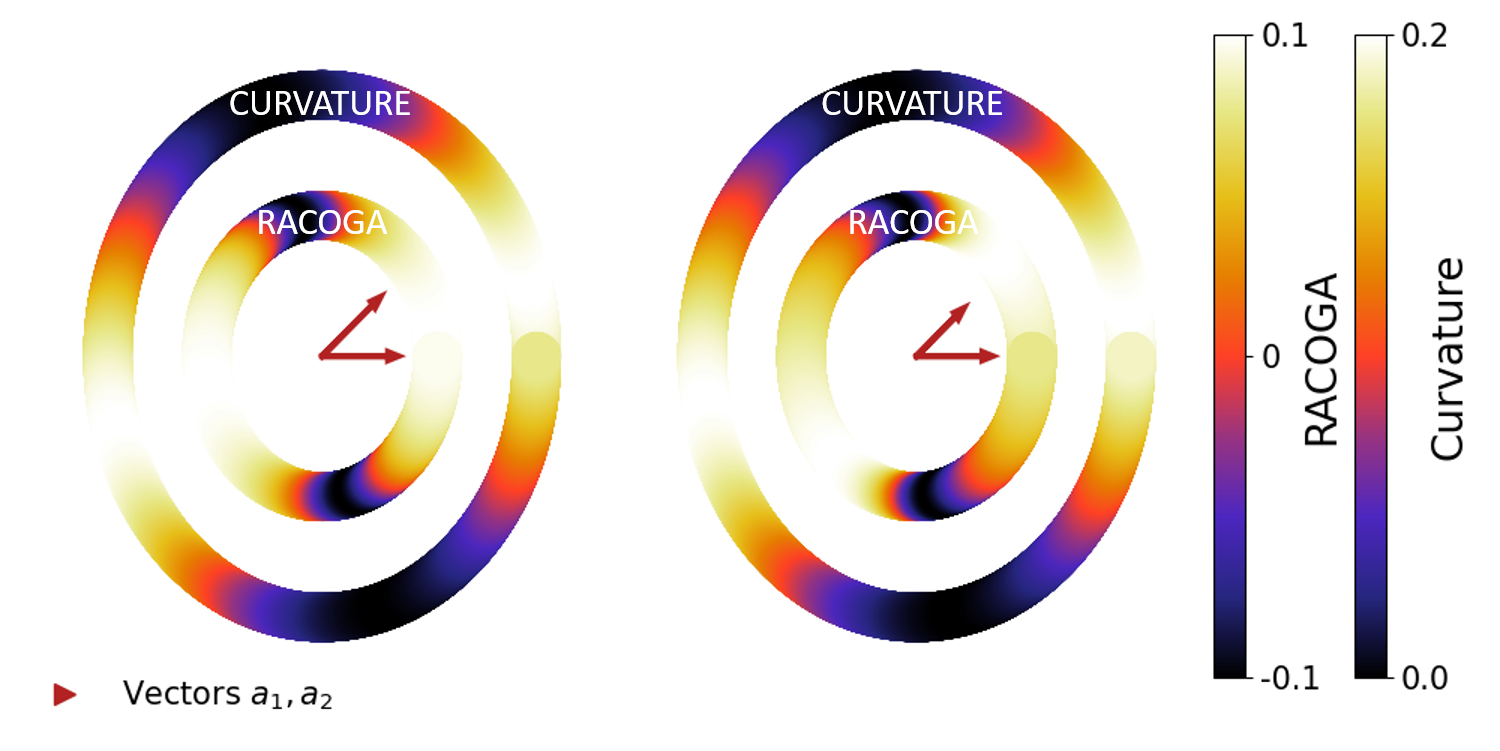}
       \caption{
       Comparison of the \ref{ass:racoga} and curvature values for problem (\ref{ex:2d_2n_case}), along a circle around the solution. On the left plot, $a_1$ and $a_2$ have the same norm, which is not the case on the right plot. Note that the low curvature zone are close to the non positive \ref{ass:racoga} areas, and are exactly the same when $a_1$ and $a_2$ have same norm.}
    \label{fig:curvature_racoga}
\end{figure}

\subsection{Proof of Proposition~\ref{prop:expectation_indep_data}}\label{app:proof_prop_exp_indep}
\paragraph{Proof of Equation~(\ref{eq:linear_regression_propr_1})}
We consider the least square problem defined by
\begin{equation}
    f(x) = \frac{1}{N}\sum_{i = 1}^N (\Phi(x,a_i)-b_i)^2,
\end{equation}
with $\Phi:\R^d\times \R^p \to \R$ differentiable and $\{ a_i, b_i \}_{i=1}^N$, random variables drawn i.i.d.

By the independence of the variable, we have for $i\neq j$
\begin{align}
    \E[\dotprod{\nabla f_i(x),\nabla f_j(x)}] &=\dotprod{\E[\nabla f_i(x)],\E[\nabla f_j(x)]} \\
    &= \norm{ \E[\nabla f_1(x)]}^2 \\
    &= \norm{ \E[(\Phi(x,a_1)-b_1) \nabla \Phi(x,a_1)]}^2 \geq 0
\end{align}
Finally, we sum over $N$ to get Equation~(\ref{eq:linear_regression_propr_1}).

\paragraph{Proof of Equation~(\ref{eq:linear_regression_propr_2})}
First, we compute for $a = (a^{(1)},\dots, a^{(d)}) \in \R^d$ and $b \in \R$
\begin{equation}
 \mathbb{E}[(\langle a,x \rangle - b) a ] = (\mathbb{E}[\sum_i a^{(1)} a^{(i)} x_i] - \mathbb{E}[a^{(1)}b],\dots, \mathbb{E}[\sum_i a^{(d)} a^{(i)} x_i] - \mathbb{E}[a^{(d)}b]).
\end{equation}
We have $a \sim \mathcal{N}(m,\Gamma)$, $b \sim \mathcal{N}(m_b,\sigma_b^2)$ with $a \ind b$. So we can deduce that $\E[(a^{(i)})^2] = \Gamma_{i,i} + m_i^2$, and $\E[a^{(i)} a^{(j)}] = \Gamma_{i,j} + m_i m_j$. Thereby, we have for a fixed $x \in \R^d$
\begin{align}
     &\mathbb{E}[(\langle a,x \rangle - b) a ] = (\mathbb{E}[\sum_i a^{(1)} a^{(i)} x_i] - \mathbb{E}[a^{(1)}b],\dots, \mathbb{E}[\sum_i a^{(d)} a^{(i)} x_i] - \mathbb{E}[a^{(d)}b])\\
     &=(\sum_i \E[a^{(1)} a^{(i)}]x_i - \E[a^{(1)}] \E[b],\dots, \sum_i \E[a^{(d)} a^{(i)}]x_i - \E[a^{(d)}] \E[b])\\
     &=( \sum_i \Gamma_{1,i}x_i + \sum_i m_1 m_i x_i - m_b m_1,\dots,\sum_i \Gamma_{d,i}x_i + \sum_i m_d m_i x_i - m_b m_d)\\
     &=( \sum_i \Gamma_{1,i}x_i,\dots, \sum_i \Gamma_{d,i}x_i) + ( m_1 ,\dots, m_d) \sum_i m_i x_i- m_b(m_1,\dots,m_d)\\
     &=\Gamma x + (m^T x - m_b) m
\end{align}
From the previous computation and Equation~(\ref{eq:linear_regression_propr_1}), we deduce Equation~(\ref{eq:linear_regression_propr_2}).

\end{document}